\newtheorem{theorem}{Theorem}[section]
\newtheorem{lemma}[theorem]{Lemma}
\newtheorem{proposition}[theorem]{Proposition}
\newtheorem{corollary}[theorem]{Corollary}
\newtheorem{definition}[theorem]{Definition}
\newcommand{\RN}[1]{%
  \textup{\uppercase\expandafter{\romannumeral#1}}%
}
\numberwithin{equation}{section}
 \author[Nupur Patanker]{Nupur Patanker}
  \address{ Indian Institute of Science Education and Research, Bhopal}
 \email{nupurp@iiserb.ac.in}
  \author[Sanjay Kumar Singh]{Sanjay Kumar Singh}
    \address{Indian Institute of Science Education and Research, Bhopal}
  \email{sanjayks@iiserb.ac.in}
  \subjclass[2010]{ 13P25 ( 14G50, 94B27, 11T71, 06A07)}
\title[generalized Hamming weights of codes] {Generalized Hamming weights of toric codes over hypersimplices and square-free affine evaluation codes}
\date{}
\begin{document}
\begin{abstract}
Let $\mathbb{F}_{q}$ be a finite field with $q$ elements, where $q$ is a power of prime $p$. A  polynomial over $\mathbb{F}_{q}$ is square-free if all its monomials are square-free. In this note, we determine an upper bound on the number of zeroes in the affine torus $T=(\mathbb{F}_{q}^{*})^{s}$ of any set of $r$ linearly independent square-free polynomials over $\mathbb{F}_{q}$ in $s$ variables, under certain conditions on $r$, $s$ and degree of these polynomials. Applying the results, we partly obtain the generalized Hamming weights of toric codes over hypersimplices and square-free evaluation codes, as defined in \cite{hyper}. Finally, we obtain the dual of these toric codes with respect to the Euclidean scalar product.

\keywords{Affine torus, Projective torus, Generalized Hamming weights, Affine Hilbert function}

\end{abstract}
\maketitle
\section{Introduction} 
The fundamental parameters of linear codes, such as dimension and minimum distance, determine the efficiency and error-correction capability of the codes. Another important property of linear codes is their generalized Hamming weights. The notion of generalized Hamming weights for a linear code $C$ over $\mathbb{F}_{q}$ is defined as follows.\\

 For any $\mathbb{F}_{q}$-subspace $D$ of $[n,k]$ code $C$, the support of $D$ is defined as
$$supp(D):=\{ 1 \leq i \leq n~:~ x_{i} \neq 0 \text{ for some } \mathbf{x}=(x_{1},\cdots,x_{n}) \in D\}.$$
For $1 \leq r \leq k$, the \textit{$r$-th generalized Hamming weight} of $C$ is defined as 
$$d_{r}(C):=min~\{~ |supp(D)|~:~ D \text{ is a linear subcode of } C \text { with } dim(D)=r\}.$$\\
In particular, the first generalized Hamming weight of $C$ is the usual minimum distance. The set of generalized Hamming weights $\{d_{1}(C),~d_{2}(C), \cdots, d_{k}(C)\}$ is called the \textit{weight hierarchy} of code $C$. The notions of generalized Hamming weights for linear codes were introduced in \cite{ghw_in}, \cite{ghw2}, and rediscovered by Wei in his paper \cite{ghw}. These weights completely characterize the performance of the code on the wire-tap channel of type $\RN{2}$, and also the performance as a $t$-resilient function. The generalized Hamming weights of various linear codes have been studied for many years.\par

Toric codes were introduced by J. Hansen in \cite{toric} and since then have been studied in \cite{tor1}, \cite{tor2}, \cite{tor3}, \cite{tor4}, \cite{tor5}, \cite{tor6}, \cite{tor7}, \cite{tor8}, etc. Projective Reed-Muller-type code over the projective torus has been studied in \cite{tor9}, \cite{tor10}, etc. Recently, Delio Jaramillo, Maria Vaz Pinto and Rafael H. Villarreal, in \cite{hyper}, introduced affine and projective toric codes over hypersimplices. The authors computed their dimension and minimum distance. They also introduced square-free evaluation codes and computed their dimension, minimum distance and second generalized Hamming weight. They posed the problem of obtaining formulae for the generalized Hamming weights of these codes. In this note, we determine the generalized Hamming weights of toric codes over hypersimplices and square-free affine evaluation code.\par

The problem of finding the generalized Hamming weights of toric codes over hypersimplices can be solved by answering the following question stated in terms of polynomials:\par
Let $s$ and $d$ be integers such that $s\geq2$ and $1 \leq d \leq s$. For $1 \leq r \leq  { s \choose d}$, let $f_{1},~f_{2},\cdots,f_{r}$ be linearly independent homogeneous square-free polynomials of degree $d$ in $s$ variables with coefficients in $\mathbb{F}_{q}$. What is the maximum number of solutions in affine torus $T=(\mathbb{F}_{q}^{*})^s$ of the system $f_{1}=f_{2}=\cdots=f_{r}=0$?\par
In \cite{hyper}, the answer to this problem is given for $r=1$. Our goal in this note is to solve a more generalized problem where $f_{1},~f_{2},\cdots,f_{r}$ are linearly independent square-free polynomials of degree $d$ in $s$ variables with coefficients in $\mathbb{F}_{q}$. To obtain our results, we follow the footsteps of \cite{acc}. Another related question is to solve the above-stated problem when $f_{1},~f_{2},\cdots,f_{r}$ are linearly independent square-free polynomials of degree at most $d$ in $s$ variables with coefficients in $\mathbb{F}_{q}$, where $1 \leq r \leq \sum_{i=0}^{d} {s \choose i}$. The answer to this problem helps us to determine the generalized Hamming weights of square-free affine evaluation codes. The answer for $r=1,2$ is already given in \cite{hyper}. In this note, we answer these questions when $d+r-2<s$ and as an application, determine the generalized Hamming weights of these codes.\par 

This note is organized as follows. In section $2$, we recall the definition of toric code over hypersimplices and square-free evaluation codes, as defined in \cite{hyper}. We also study the affine Hilbert function. In section $3$, we determine an upper bound on the number of solutions in the affine torus of any set of $r$ linearly independent square-free polynomials over $\mathbb{F}_{q}$ of degree $d$ in $s$ variables, $1 \leq r \leq  { s \choose d}$. We also determine an upper bound on the number of solutions in affine torus of any set of $r$ linearly independent square-free polynomials over $\mathbb{F}_{q}$ of degree at most $d$ in $s$ variables, $1 \leq r \leq \sum_{i=0}^{d} {s \choose i}$. In section $4$, we determine the generalized Hamming weights of the toric codes over hypersimplices and square-free evaluation codes in specific cases. In section $5$, we conclude the note by determining the dual of toric codes over hypersimplices with respect to the Euclidean scalar product.
\section{Preliminaries}
Let $s$ and $d$ be integers such that $s \geq 2$ and $ 1 \leq d \leq s$. In this section, we recall the definitions of toric codes over hypersimplices and square-free affine evaluation codes. We also recall the known results on these codes and study the affine Hilbert function.\par
Throughout this note, we use the notation $K:=\mathbb{F}_{q}$, where $q$ is a power of prime $p$.
\subsection{Evaluation codes over $d$-th hypersimplex, \cite{hyper} }
Let $S:=K[t_{1},\cdots,t_{s}]=\bigoplus_{d=0}^{\infty} S_{d}$ be the polynomial ring in $s$ variables over $K$ with standard grading.\par Let $\mathcal{P}$ be the convex hull in $\mathbb{R}^{s}$ of all integral points $e_{i_1}+e_{i_2}+\cdots+e_{i_d}$ such that $1 \leq i_{1} <\cdots < i_{d} \leq s$, where $e_{i}$ is the $i$-th unit vector in $\mathbb{R}^{s}$. The lattice polytope $\mathcal{P}$ is called the $d$-th hypersimplex in $\mathbb{R}^{s}$. The affine torus of the affine space $\mathbb{A}^{s}$ is given by $T := (K^{*})^{s}$, where $K^{*}$ is the multiplicative group of $K$. The projective torus of the projective space $\mathbb{P}^{s-1}$ over $K$ is given by $\mathbb{T} := [T]$, where $[T]$ is the image of $T$ under the map $\phi:\mathbb{A}^{s} \backslash \{0\} \rightarrow \mathbb{P}^{s-1}$, $a \mapsto [a]$. The cardinality of $T$ is $m:=(q-1)^{s}$ and the cardinality of $\mathbb{T}$ is $\bar{m}:=(q-1)^{s-1}$. Let $V_{d}$ be the set all monomials $t^{a}:=t_{1}^{a_{1}} t_{2}^{a_{2}} \cdots t_{s}^{a_{s}}$ such that $a \in \mathcal{P} \cap \mathbb{Z}^{s}$ and let $KV_{d}$ be the vector space over $K$ generated by $V_{d}$. Thus, $KV_{d}$ is the space of homogeneous square-free polynomials of $S$ of degree $d$. Denote by $P_{1},~P_{2},\cdots, P_{m}$ all points of the affine torus $T$ of $\mathbb{A}^{s}$ and denote by $[Q_{1}],~[Q_{2}],\cdots, [Q_{\bar{m}}]$ all points of the projective torus $\mathbb{T}$ of $\mathbb{P}^{s-1}$. We assume that the first entry of each $Q_{i}$ is $1$. Thus, $\mathbb{T}=\{1\} \times (\mathbb{F}_{q}^{*})^{s-1}$.\par 
 The affine toric code $C_d$ of $\mathcal{P}$ of degree $d$ is defined as the image of the evaluation map
\begin{equation}
ev_{d}:KV_{d} \rightarrow \mathbb{F}_{q}^{m},~~  ev_{d}(f):=(f(P_{1}),~f(P_{2}), \cdots, f(P_{m})).
\end{equation}
The code $C_d$ has length $m$. The minimum distance of $C_d$ is given by $$\delta({C_d}):=min \{~ |T \backslash V_{T}(f)| : ~f \in KV_{d} \backslash I(T)\},$$
where $V_{T}(f)$ denotes the set of zeroes of $f \in S$ in $T$.\\

The projective toric code $C^{\mathbb{P}}_{d}$ of $\mathcal{P}$ of degree $d$ is defined as the image of the evaluation map
\begin{equation}
ev_{d}:KV_{d} \rightarrow \mathbb{F}_{q}^{\bar{m}},~~  ev_{d}(g):=(g(Q_{1}),~g(Q_{2}), \cdots, g(Q_{\bar{m}})).
\end{equation}
The code $C^{\mathbb{P}}_{d}$ has length $\bar{m}$. The minimum distance of $C^{\mathbb{P}}_{d}$ is given by $$\delta({C^{\mathbb{P}}_{d}}):=min\{ ~| \mathbb{T} \backslash V_{\mathbb{T}}(g)| : ~g \in KV_{d} \backslash I(\mathbb{T})\},$$
where $V_{\mathbb{T}}(g)$ denotes the set of zeroes of $g \in S$ in $\mathbb{T}$.\\
 
The dimension and minimum distance of $C_{d}$ and $C^{\mathbb{P}}_{d}$ are given by the following theorems.

\begin{theorem}$($\cite{hyper}, Proposition $4.4)$
Let $C_{d}$ and $C^{\mathbb{P}}_{d}$ be the affine and projective toric code of $\mathcal{P}$ of degree $d$, respectively. Then
\[
dim_{K}(C_{d})= dim_{K}(C^{\mathbb{P}}_{d}) =\begin{cases}
 {s \choose d}, & \text{if } q \geq 3,\\
 1, &\text{if } q=2.\\
 \end{cases}  
 \]
\end{theorem}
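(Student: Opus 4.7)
The plan is to analyze the evaluation map $ev_d$ separately for $q \geq 3$ and $q = 2$, and reduce the projective statement to the affine one by exploiting the fact that $\mathbb{T} = \{1\} \times (\mathbb{F}_q^*)^{s-1}$.

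First I would handle the affine case for $q \geq 3$. Since $\dim_K KV_d = \binom{s}{d}$ (the square-free monomials of degree $d$ form a basis), it suffices to show that $ev_d \colon KV_d \to \mathbb{F}_q^m$ is injective, i.e.\ that $KV_d \cap I(T) = 0$. I would invoke the standard fact that $I(T) = \langle t_1^{q-1}-1,\ldots,t_s^{q-1}-1 \rangle$, so that a $K$-basis of the quotient $S/I(T)$ is given by the monomials $t^a$ with $0 \leq a_i \leq q-2$. When $q \geq 3$ every square-free monomial has each exponent in $\{0,1\} \subseteq \{0,\ldots,q-2\}$, so every nonzero $f \in KV_d$ is already in normal form modulo $I(T)$ and therefore cannot lie in $I(T)$. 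Hence $ev_d$ is injective, giving $\dim_K C_d = \binom{s}{d}$.

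Next I would do the projective case for $q \geq 3$ by the same strategy. Let $f = \sum_{|I|=d} c_I \prod_{i \in I} t_i$ be an element of $KV_d$ that vanishes on $\mathbb{T}$. Specializing at $t_1 = 1$ produces the polynomial
\[
\widetilde f(t_2,\ldots,t_s) \;=\; \sum_{\substack{|J|=d-1 \\ J \subseteq \{2,\ldots,s\}}} c_{J\cup\{1\}} \, t^{J} \;+\; \sum_{\substack{|J|=d \\ J \subseteq \{2,\ldots,s\}}} c_{J}\, t^{J},
\]
which is a square-free polynomial in $s-1$ variables vanishing on $(\mathbb{F}_q^*)^{s-1}$. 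Applying the normal-form argument of the previous paragraph to $\widetilde f$ in $K[t_2,\ldots,t_s]$ forces $\widetilde f = 0$; since the displayed square-free monomials in $t_2,\ldots,t_s$ are pairwise distinct, all the coefficients $c_I$ vanish, so $f = 0$. Thus $ev_d$ is injective on $KV_d$, yielding $\dim_K C^{\mathbb{P}}_d = \binom{s}{d}$.

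Finally I would treat $q = 2$, where $T = \{(1,\ldots,1)\}$ and $\mathbb{T} = \{(1,\ldots,1)\}$ are both singletons. The evaluation map sends every square-free monomial of degree $d$ to $1 \in \mathbb{F}_2$, so the image of $ev_d$ is the one-dimensional subspace spanned by the all-ones vector (which is nonzero because $V_d$ is nonempty for $1 \leq d \leq s$). The main thing to be careful about is the citation of the affine-torus vanishing ideal and the verification that the exponent range $\{0,1\}$ is strictly inside the normal-form range $\{0,\ldots,q-2\}$ — the hypothesis $q \geq 3$ is essential precisely here, and this is the only real content of the argument; everything else is bookkeeping.
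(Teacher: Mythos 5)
Your argument is correct. Note, however, that the paper does not prove this statement at all: it is quoted verbatim as a known result (Proposition 4.4 of the cited work of Jaramillo, Vaz Pinto and Villarreal), so there is no internal proof to compare against. Your route is the natural one and is consistent with the machinery the paper itself sets up in Section 3, where it records that $\{t_i^{q-1}-1 : 1\le i\le s\}$ is a Gr\"obner basis of $I(T)$; from this the standard monomials modulo $I(T)$ are exactly those with all exponents in $\{0,\dots,q-2\}$, and your observation that for $q\ge 3$ every square-free monomial is standard immediately gives $KV_d\cap I(T)=0$ and hence injectivity of $ev_d$ in the affine case. Your reduction of the projective case to the affine one by specializing $t_1=1$ is also sound: the specialization of a homogeneous square-free $f$ of degree $d$ is square-free in $t_2,\dots,t_s$ with pairwise distinct monomials (degree $d-1$ pieces coming from sets containing $1$, degree $d$ pieces from the rest), so its vanishing on $(\mathbb{F}_q^*)^{s-1}$ forces all coefficients of $f$ to vanish; this matches the paper's convention $\mathbb{T}=\{1\}\times(\mathbb{F}_q^*)^{s-1}$. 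The $q=2$ case is handled correctly since $T$ and $\mathbb{T}$ are singletons and every square-free monomial evaluates to $1$. In short, the proposal is a complete and correct proof of the cited dimension formula, obtained by exactly the kind of footprint/normal-form argument one would expect, even though the paper itself leaves it as a citation.
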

~\\
\begin{theorem} $($\cite{hyper}, Theorem $4.5)$
Let $C_{d}$ be the affine toric code of $\mathcal{P}$ of degree $d$ and let $\delta(C_{d})$ be its minimum distance. Then
\[
 \delta(C_{d}) =\begin{cases}
(q-2)^{d}(q-1)^{s-d}, & \text{if } d \leq s/2, ~q \geq 3,\\
 (q-2)^{s-d}(q-1)^{d}, &\text{if } s/2<d<s,~ q\geq 3,\\
 (q-1)^{s}, &\text{if } d=s,\\
 1, &\text{if } q=2.\\
 \end{cases}  
 \]
and let $C^{\mathbb{P}}_{d}$ be the projective toric code of $\mathcal{P}$ of degree $d$ and let $\delta(C^{\mathbb{P}}_{d})$ be its minimum distance. Then
\[
 \delta(C^{\mathbb{P}}_{d}) =\begin{cases}
(q-2)^{d}(q-1)^{s-d-1}, & \text{if } d \leq s/2, ~q \geq 3,\\
 (q-2)^{s-d}(q-1)^{d-1}, &\text{if } s/2<d<s,~ q\geq 3,\\
 (q-1)^{s-1}, &\text{if } d=s,\\
 1, &\text{if } q=2.\\
 \end{cases}  
 \] 
\end{theorem}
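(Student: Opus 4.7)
The plan is to prove the two formulae by first reducing the projective case to the affine one and then proving the affine statement by matching upper and lower bounds. Since every element of $KV_{d}$ is homogeneous of degree $d$, scaling $P=(t_{1},\ldots,t_{s})\in T$ by $\lambda\in K^{*}$ only multiplies $f(P)$ by $\lambda^{d}$, so $V_{T}(f)$ is a union of $K^{*}$-orbits of size $q-1$. Using the projection $T\to\mathbb{T}$ this yields $|V_{T}(f)|=(q-1)\,|V_{\mathbb{T}}(f)|$ for every $f\in KV_{d}$, and hence $\delta(C_{d})=(q-1)\,\delta(C^{\mathbb{P}}_{d})$. Thus it suffices to establish the formula for $\delta(C_{d})$. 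The boundary cases are immediate: for $d=s$ the space $KV_{s}$ is spanned by $t_{1}t_{2}\cdots t_{s}$, which is nowhere zero on $T$, while for $q=2$ the torus $T=\{(1,\ldots,1)\}$ is a single point and every nonzero codeword has weight $1$.

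For the upper bound on $\delta(C_{d})$ (a polynomial with many zeros on $T$) in the range $d\leq s/2$, I would take
\[
f \;=\; \prod_{i=1}^{d}(t_{i}-t_{d+i}).
\]
It is homogeneous of degree $d$, and because the $2d$ variables involved are pairwise distinct, expanding the product yields a $K$-linear combination of square-free monomials, so $f\in KV_{d}$. Its nonzeros in $T$ are exactly the points with $t_{i}\neq t_{d+i}$ for every $i\leq d$, giving $((q-1)(q-2))^{d}(q-1)^{s-2d}=(q-2)^{d}(q-1)^{s-d}$ points. For $s/2<d<s$ I would multiply an analogous product by a monomial in the remaining variables,
\[
f \;=\; t_{2(s-d)+1}\cdots t_{s}\cdot\prod_{i=1}^{s-d}(t_{i}-t_{s-d+i}),
\]
whose nonzero locus in $T$ has size $(q-2)^{s-d}(q-1)^{d}$ by the same disjoint-pair count.

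For the matching lower bound in the range $d\leq s/2$ my first move is the footprint bound relative to any graded monomial order. The vanishing ideal of the torus is $I(T)=(t_{1}^{q-1}-1,\ldots,t_{s}^{q-1}-1)$ with initial ideal $(t_{1}^{q-1},\ldots,t_{s}^{q-1})$ and standard monomials $\{t^{\beta}:0\leq\beta_{i}\leq q-2\}$. Given $f\in KV_{d}\setminus I(T)$ with leading monomial $t^{\alpha}$, which is necessarily square-free of degree $d$, the footprint of $I(T)+(f)$ is contained in the set of standard monomials not divisible by $t^{\alpha}$, and a direct count of those divisible by $t^{\alpha}$ gives
\[
|V_{T}(f)| \;\leq\; (q-1)^{s} - (q-2)^{d}(q-1)^{s-d},
\]
so $|T\setminus V_{T}(f)|\geq (q-2)^{d}(q-1)^{s-d}$. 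For $s/2<d<s$ this bound is too weak, and I would invoke a hypersimplex duality: the $K$-linear isomorphism $\phi\colon KV_{d}\to KV_{s-d}$, $\sum c_{\alpha}t^{\alpha}\mapsto \sum c_{\alpha}t^{\mathbf{1}-\alpha}$, satisfies $\phi(f)(P)=(t_{1}\cdots t_{s})\,f(P^{-1})$ for all $P\in T$. Since inversion is a bijection of $T$ and $t_{1}\cdots t_{s}$ does not vanish there, $|V_{T}(\phi(f))|=|V_{T}(f)|$, giving $\delta(C_{d})=\delta(C_{s-d})$; the already handled case applied to degree $s-d\leq s/2$ then yields the claimed bound.

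The main obstacle, as I read it, is precisely the passage from $d\leq s/2$ to $d>s/2$: the naive footprint on a degree-$d$ leading monomial gives only $(q-2)^{d}(q-1)^{s-d}$, which is strictly less than the target $(q-2)^{s-d}(q-1)^{d}$, so some additional structure must be exploited. The complementation $\alpha\mapsto\mathbf{1}-\alpha$ on square-free monomials paired with the inversion automorphism of $T$ supplies exactly the symmetry that upgrades the bound; checking that $\phi$ preserves the complement $KV_{d}\setminus I(T)$ and indeed induces an equality of torus zero-counts is the most delicate bookkeeping step of the proof, after which assembling the upper and lower bounds and dividing by $q-1$ for the projective version completes the argument.
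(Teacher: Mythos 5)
Your proof is correct, but there is nothing internal to compare it against: the paper does not prove this statement, it simply quotes it from \cite{hyper} (Theorem 4.5 there) as background. That said, your route closely parallels tools the paper develops for its own results, with one genuine divergence. The reduction $\delta(C_{d})=(q-1)\,\delta(C^{\mathbb{P}}_{d})$ via homogeneity is the same orbit count the paper invokes in Theorem 4.1 (the identity $(q-1)|V_{\mathbb{T}}(f)|=|V_{T}(f)|$); your complementation map $\sum c_{\alpha}t^{\alpha}\mapsto\sum c_{\alpha}t^{\mathbf{1}-\alpha}$ is exactly the paper's $f\mapsto f^{*}$ of Definition 4.3, used there for the same purpose (transferring bounds from $d\leq s/2$ to $d>s/2$) in Theorem 4.4; and your footprint count of standard monomials outside $\langle t_{1}^{q-1},\dots,t_{s}^{q-1},LT(f)\rangle$ is precisely the $r=1$ instance of the affine Hilbert function bound of Section 3 (equation (3.3) together with Lemma 3.5). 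Where you differ is in how the key inequality $|V_{T}(f)|\leq(q-1)^{s}-(q-2)^{d}(q-1)^{s-d}$ is obtained: the paper reproves it (Lemma 3.2, following \cite{hyper}) by induction on $s$ using the factorization $f=(t_{1}-\alpha)h$ and a case analysis, whereas you get it in one stroke from the initial-ideal count; your version is shorter and is the form that generalizes to several polynomials, which is exactly what Section 3 exploits for the higher Hamming weights. Two small points of hygiene: write $\phi(f)(P)=(p_{1}\cdots p_{s})\,f(P^{-1})$ for $P=(p_{1},\dots,p_{s})\in T$ rather than $(t_{1}\cdots t_{s})f(P^{-1})$, and state explicitly that $\phi$ is a linear bijection $KV_{d}\to KV_{s-d}$ with $\phi(f)\in I(T)$ if and only if $f\in I(T)$, so that it really identifies the two minimization problems defining $\delta(C_{d})$ and $\delta(C_{s-d})$.
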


\subsection{Square-free affine evaluation code}
Let $V_{\leq d}$ be the set of all square-free monomials of $S$ of degree at most $d$ and $KV_{\leq d}$ be the corresponding subspace of $S_{\leq d}$. If we replace $KV_{d}$ by $KV_{\leq d}$ in the evaluation map of equation $(2.1)$, the image
of the resulting map, denoted $C_{\leq d}$, is called a square-free affine evaluation code of degree $d$ on $T$.

The following results, proved in \cite{hyper} give the dimension, minimum distance and second generalized Hamming weight of $C_{\leq d}$.

\begin{proposition}{$($\cite{hyper}, Proposition $5.2)$}
Let $C_{\leq d}$ be the square-free affine evaluation code of degree $d$ on the affine torus $T = (K^{*})^{s}$. Then, the length of $C_{\leq d}$ is $(q-1)^{s}$, and the dimension of $C_{\leq d}$ is given by
\[
dim_{K}(C_{\leq d}) =\begin{cases}
 {s \choose 0}+ {s \choose 1}+ \cdots+{s \choose d}, & \text{if } q \geq 3,\\
 1, &\text{if } q=2.\\
 \end{cases}  
 \]
\end{proposition}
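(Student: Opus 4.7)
The plan is to reduce the dimension computation to showing that the evaluation map $ev_d$ is injective when restricted to $KV_{\leq d}$ (at least when $q \geq 3$), so that $\dim_K C_{\leq d} = \dim_K KV_{\leq d}$. The length is immediate from the definition of the evaluation map, whose codomain is $\mathbb{F}_q^m$ with $m = (q-1)^s$. Since $V_{\leq d}$ consists of all square-free monomials of degree at most $d$ in $s$ variables, one has $\dim_K KV_{\leq d} = \sum_{i=0}^d \binom{s}{i}$, matching the target formula in the case $q \geq 3$. So the proof reduces to analyzing $\ker(ev_d|_{KV_{\leq d}}) = KV_{\leq d} \cap I(T)$, where $I(T)$ is the vanishing ideal of the affine torus.

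First I handle the trivial case $q = 2$. Here $T = (\mathbb{F}_2^*)^s = \{(1,\dots,1)\}$ consists of a single point, so the evaluation map lands in $\mathbb{F}_2$, giving $\dim_K C_{\leq d} \leq 1$. The constant polynomial $1 \in KV_{\leq d}$ evaluates to $1 \neq 0$, so the dimension is exactly $1$.

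For the main case $q \geq 3$, I would invoke the standard description of the vanishing ideal of the affine torus:
\begin{equation*}
I(T) = (t_1^{q-1} - 1,\ t_2^{q-1} - 1,\ \ldots,\ t_s^{q-1} - 1).
\end{equation*}
The generating set is already a Gröbner basis with respect to any monomial order, and a $K$-basis of $S/I(T)$ is given by the monomials $t^a = t_1^{a_1}\cdots t_s^{a_s}$ with $0 \leq a_i \leq q-2$ for every $i$. Since $q \geq 3$, we have $q-2 \geq 1$, so every square-free monomial $t^a$ (with $a_i \in \{0,1\}$) lies in this normal-form basis. Consequently the square-free monomials of degree at most $d$ are linearly independent modulo $I(T)$, which yields $KV_{\leq d} \cap I(T) = \{0\}$. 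Therefore $ev_d$ is injective on $KV_{\leq d}$, and $\dim_K C_{\leq d} = \dim_K KV_{\leq d} = \sum_{i=0}^d \binom{s}{i}$.

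There is no serious obstacle here: the whole argument rests on the explicit form of $I(T)$ together with the observation that square-free exponents trivially fall within the normal-form region when $q-1 \geq 2$. The only point that requires a moment of care is to ensure that the chosen generators really form a Gröbner basis (so that the normal-form monomials are truly a $K$-basis of $S/I(T)$), which follows since the leading terms $t_i^{q-1}$ are pairwise coprime and hence all $S$-polynomials reduce to zero.
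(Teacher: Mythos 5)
Your proof is correct. Note, however, that the paper offers no proof of this statement to compare against: it is quoted verbatim from \cite{hyper}, Proposition 5.2, and used as a known input. Your argument — kernel of the evaluation map equals $I(T)=\langle t_1^{q-1}-1,\dots,t_s^{q-1}-1\rangle$, whose generators form a Gr\"obner basis (pairwise coprime leading terms), so for $q\geq 3$ the square-free monomials lie among the standard monomials with exponents at most $q-2$ and are therefore independent modulo $I(T)$, together with the one-point torus observation for $q=2$ — is the standard route and is consistent with the machinery the paper itself invokes in Section 3, where it records that $\{t_i^{q-1}-1\}$ is a Gr\"obner basis of $I(T)$.
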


\begin{theorem}{$($\cite{hyper}, Theorem $5.5)$}
If $q \geq 3$, then the minimum distance $\delta(C_{\leq d})$ of $C_{\leq d}$ is $(q-2)^{d} (q-1)^{s-d}$.\\
\end{theorem}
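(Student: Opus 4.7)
The plan is to establish the minimum distance by bounding $|T\setminus V_T(f)|$ from both sides as $f$ ranges over $KV_{\leq d}\setminus I(T)$. The upper bound comes from an explicit example, while the lower bound is obtained by an induction on $d$, reducing to the square-free evaluation code in one fewer variable and one lower degree.

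For the upper bound, I would fix elements $\alpha_1,\dots,\alpha_d\in K^*$ and take $f:=\prod_{i=1}^{d}(t_i-\alpha_i)$. Expanding the product gives a sum of square-free monomials in $t_1,\dots,t_d$, each of degree at most $d$, so $f\in KV_{\leq d}$; since $q\geq 3$, each coordinate admits a value different from both $0$ and $\alpha_i$, showing $f\notin I(T)$. A point $P=(a_1,\dots,a_s)\in T$ satisfies $f(P)\neq 0$ iff $a_i\neq \alpha_i$ for $i\leq d$ and $a_i$ is arbitrary in $K^*$ for $i>d$, yielding exactly $(q-2)^d(q-1)^{s-d}$ non-vanishing coordinates.

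For the lower bound I would induct on $d$. The base case $d=0$ is immediate. For the inductive step, if $f\in KV_{\leq d-1}$, then the induction hypothesis together with $q-1\geq q-2$ already gives $|T\setminus V_T(f)|\geq(q-2)^{d-1}(q-1)^{s-d+1}\geq(q-2)^d(q-1)^{s-d}$. Otherwise, after permuting variables, I may assume $f$ has a monomial of degree exactly $d$ containing $t_d$, so I can write
\[
f=g+t_d h,
\]
where $g$ and $h$ are square-free polynomials in the $s-1$ variables $t_1,\dots,t_{d-1},t_{d+1},\dots,t_s$, with $\deg g\leq d$, $\deg h\leq d-1$, and $h\not\equiv 0$. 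Partition $T=(K^*)^s$ by its first $s-1$ coordinates $a\in T':=(K^*)^{s-1}$: for each $a$, the restriction $f(a,t_d)$ is affine in $t_d$, so when $h(a)\neq 0$ the equation has at most one root, giving at least $q-2$ values of $t_d\in K^*$ with $f(a,t_d)\neq 0$; when $h(a)=0$ the fiber contributes either $0$ or $q-1$. Keeping only the fibers with $h(a)\neq 0$ yields
\[
|T\setminus V_T(f)|\;\geq\;(q-2)\cdot\bigl|\{a\in T':h(a)\neq 0\}\bigr|.
\]
Proposition 2.3 applied with $s$ replaced by $s-1$ gives $KV_{\leq d-1}\cap I(T')=\{0\}$ for $q\geq 3$, so $h\not\equiv 0$ implies $h\notin I(T')$. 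The induction hypothesis then bounds the right-hand side below by $(q-2)\cdot(q-2)^{d-1}(q-1)^{(s-1)-(d-1)}=(q-2)^d(q-1)^{s-d}$, closing the induction.

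The main obstacle is making the inductive reduction go through while preserving the square-free hypothesis and guaranteeing that the coefficient polynomial $h$ is not merely non-zero but also non-vanishing on the smaller torus $T'$; this is precisely where the assumption $q\geq 3$ is used essentially, through the injectivity of the evaluation map on $KV_{\leq d-1}$ in $s-1$ variables. Once this point is secured, the fiber-by-fiber counting is routine, and the matching upper-bound construction makes the bound tight.
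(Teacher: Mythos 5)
Your proposal is correct, but it should be compared with the fact that this paper never proves Theorem 2.4 directly: the statement is quoted from \cite{hyper} as background, and within the paper's own framework it would instead be recovered from Theorem 3.9 (equivalently Theorem 4.10 with $r=1$, where the condition $d+r-2<s$ is automatic), whose proof runs through affine Hilbert functions, leading-term ideals and the shadow-counting Lemma 3.8 in the style of Beelen--Datta, together with explicit polynomials of the form $(t_1-1)\cdots(t_d-1)$ for the matching upper bound. Your route is genuinely different and more elementary: the same kind of explicit product $\prod_{i=1}^{d}(t_i-\alpha_i)$ for the upper bound, but for the lower bound a direct induction on $d$ via the decomposition $f=g+t_d h$ and fibre-by-fibre counting over $T'=(\mathbb{F}_q^*)^{s-1}$, with no Groebner or Hilbert-function machinery; what the paper's heavier approach buys is uniformity in $r$ (all generalized Hamming weights with $d+r-2<s$ at once), while yours is shorter and essentially self-contained at $r=1$. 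Two small points to tidy. First, your appeal to Proposition 2.3 in $s-1$ variables needs its standing hypotheses $s-1\geq 2$ and $1\leq d-1\leq s-1$; the excluded cases are trivial ($d=1$ makes $h$ a nonzero constant, and $s=2$ makes $h$ affine in one variable, which cannot vanish at all $q-1\geq 2$ points of $\mathbb{F}_q^*$) but should be stated. Second, you can avoid Proposition 2.3 altogether by strengthening the inductive statement to all nonzero square-free $f$ of degree at most $d$ (rather than $f\notin I(T)$): the resulting positive lower bound then shows a posteriori that no such $f$ lies in $I(T)$, which also re-proves the injectivity underlying Proposition 2.3. As written, your use of the dimension formula is not circular, since \cite{hyper} establishes it independently of the minimum distance, but the strengthened induction makes the argument cleaner and independent of that input.
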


\begin{theorem}{ $($\cite{hyper}, Theorem $5.6)$}
If $q\geq 3$ and $d \geq 1$, then the second generalized Hamming weight of $C_{\leq d}$ is
\[
\delta_{2}(C_{\leq d}) =\begin{cases}
(q-2)^{s-1}(q-1), & \text{if } d=s,\\
 (q-2)^{d}(q-1)^{s-d-1}q, &\text{if } d<s.\\
 \end{cases}  
 \]
\end{theorem}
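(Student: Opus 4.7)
The plan is to reduce the computation of $\delta_2(C_{\leq d})$ to a question about common zero sets of pairs of polynomials. By the definition of the generalized Hamming weight together with the injectivity of $\mathrm{ev}_d$ on $KV_{\leq d}$ when $q\geq 3$ (which holds since $\dim_K C_{\leq d}=\dim_K KV_{\leq d}$ by Proposition 2.4), every $2$-dimensional subcode $D\subseteq C_{\leq d}$ lifts uniquely to a $2$-dimensional subspace $\widetilde{D}\subseteq KV_{\leq d}$, and $|T|-|\mathrm{supp}(D)|=|V_T(\widetilde{D})|$. Hence
\[
\delta_2(C_{\leq d})=(q-1)^s-\max\bigl\{\,|V_T(f_1)\cap V_T(f_2)|\,:\,f_1,f_2\in KV_{\leq d}\text{ linearly independent}\bigr\}.
\]

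For the upper bound on $\delta_2$, we exhibit explicit polynomials achieving equality. When $d<s$, pick $a_1,\ldots,a_{d+1}\in K^*$ and set
\[
f_1=\prod_{i=1}^{d}(t_i-a_i),\qquad f_2=(t_{d+1}-a_{d+1})\prod_{i=1}^{d-1}(t_i-a_i).
\]
These are linearly independent square-free polynomials of degree $d$ sharing the factor $\prod_{i=1}^{d-1}(t_i-a_i)$. A point of $T$ lies outside $V_T(f_1)\cap V_T(f_2)$ precisely when $t_i\neq a_i$ for all $1\leq i\leq d-1$ and $(t_d,t_{d+1})\neq(a_d,a_{d+1})$, yielding
\[
(q-2)^{d-1}\bigl((q-1)^2-1\bigr)(q-1)^{s-d-1}=q(q-2)^d(q-1)^{s-d-1}
\]
such points. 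When $d=s$, take $f_1=\prod_{i=1}^{s}(t_i-a_i)$ and $f_2=\prod_{i=1}^{s-1}(t_i-a_i)$ in $KV_{\leq s}$; here $V_T(f_2)\subseteq V_T(f_1)$, so the joint complement has size $(q-2)^{s-1}(q-1)$.

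For the lower bound on $\delta_2$ --- the hard direction --- the approach is to first strengthen Theorem 2.5 by identifying the extremal codewords: any $f\in KV_{\leq d}\setminus I(T)$ attaining the minimum distance is, up to scalar, of the form $\prod_{i\in J}(t_i-a_i)$ with $|J|=d$ and $a_i\in K^*$. For any linearly independent pair $(f_1,f_2)$, we split into cases: if $f_1$ is extremal, a combinatorial count must show that $f_2$, not proportional to $f_1$, removes at least $(q-2)^d(q-1)^{s-d-1}$ points from $V_T(f_1)$; otherwise $|V_T(f_1)|$ is already strictly smaller and the slack suffices. The main obstacle is the case analysis in the extremal sub-case, particularly controlling how the index set and constants of $f_2$ overlap those of $f_1$ when $f_2$ is also extremal. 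A clean organization is induction on $s$: write $f_j=g_j+t_sh_j$ with $g_j,h_j$ square-free in $t_1,\ldots,t_{s-1}$, slice $V_T(f_j)$ by $t_s\in K^*$, apply the inductive bound on each slice, and handle degenerate slices (where $g_1+\alpha h_1$ and $g_2+\alpha h_2$ become proportional) separately; the case $d=s$ serves as a terminal base. Alternatively, an affine-Hilbert-function estimate on $S/((f_1,f_2)+I(T))$ converts the bound into a monomial-counting problem on a square-free initial ideal, replacing the ad hoc case work with a careful choice of monomial order.
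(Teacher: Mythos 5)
Your reduction of $\delta_{2}(C_{\leq d})$ to maximizing $|V_{T}(f_{1})\cap V_{T}(f_{2})|$ over linearly independent pairs in $KV_{\leq d}$ is correct (injectivity of the evaluation map for $q\geq 3$ follows from the dimension count in Proposition 2.3), and your explicit pairs do achieve the claimed values: for $d<s$ the count $(q-2)^{d-1}\bigl((q-1)^{2}-1\bigr)(q-1)^{s-d-1}=q(q-2)^{d}(q-1)^{s-d-1}$ is right, and for $d=s$ the nested pair gives $(q-2)^{s-1}(q-1)$. This matches the easy half of the paper's treatment (Theorem 4.10 uses essentially the same construction with all $a_{i}=1$). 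So you have established $\delta_{2}(C_{\leq d})\leq$ the stated quantities.

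The genuine gap is the lower bound, which is the entire substance of the theorem, and your proposal does not prove it. The ingredients you lean on are themselves unestablished: the classification of minimum-weight codewords of $C_{\leq d}$ as scalar multiples of $\prod_{i\in J}(t_{i}-a_{i})$ is a nontrivial claim asserted without proof, and even granting it, the step ``otherwise $|V_{T}(f_{1})|$ is already strictly smaller and the slack suffices'' does not follow: a non-extremal $f_{1}$ a priori has only one fewer zero in $T$, whereas you need a deficit of $(q-2)^{d}(q-1)^{s-d-1}$ additional points, so the case split collapses without a separate argument. The induction-on-$s$ slicing and the Hilbert-function alternative are named but not carried out. For comparison, the paper's actual route (for $d<s$) is precisely your second alternative executed in full: reduce to polynomials with distinct leading monomials (Lemma 3.6), bound $|V_{T}(f_{1},f_{2})|$ by the affine Hilbert function of the monomial ideal generated by $t_{1}^{q-1},\ldots,t_{s}^{q-1}$ and the two leading terms (Propositions 2.7--2.8), and then prove the shadow estimate $|\nabla_{F}(B)|\geq q(q-2)^{d}(q-1)^{s-d-1}$ for $r=2$ (Lemma 3.8, which splits into the cases $e_{1}=d$ and $e_{1}<d$), yielding Theorem 3.9 and hence equality in Theorem 4.10; the case $d=s$ is outside the paper's hypothesis $d+r-2<s$ and is taken from \cite{hyper}. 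Until you supply an argument of this kind (or a correct direct case analysis, including $d=s$), your proposal proves only one of the two inequalities.
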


\subsection{Affine Hilbert function} In this subsection, we briefly discuss the affine Hilbert function of an ideal $I \subset K[t_{1},\cdots,t_{s}] $. For more details on this topic refer to \cite{book} and \cite{ahf}.\par
Let $K[t_{1},\cdots,t_{s}]_{\leq u}$ denotes the subset of $K[t_{1},\cdots,t_{s}]$ consisting of polynomials of total degree $\leq u$. For an ideal $I \subset K[t_{1},\cdots,t_{s}]$, we denote by $I_{\leq u}$ the subset of $I$ consisting of polynomials of degree $\leq u$.
\begin{definition}
The \textit{affine Hilbert function} of $I$ is the function on the non-negative integers $u$ defined by $$^{a} \mathrm{{HF}_{I}} (u) := dim_{K}~K[t_{1},\cdots,t_{s}]_{\leq u}/I_{\leq u}= dim_{K}~K[t_{1},\cdots,t_{s}]_{\leq u} -dim_{K}~ I_{\leq u}.$$
\end{definition}

Note that if $I\subseteq J$ are any ideals of $K[t_{1},\cdots,t_{s}]$, then $^{a} \mathrm{{HF}_{I}}(u) \geq$ $^{a} \mathrm{{HF}_{J}}(u)$. Given a subset $X$ of $K^{s}$, let $I(X)$ denotes the vanishing ideal of $X$ in $K[t_{1},\cdots,t_{s}]$. Then the affine Hilbert function of $X$, denoted by $^{a} \mathrm{{HF}_{X}}(u)$, is defined as $^{a} \mathrm{{HF}_{X}}(u):=$ $^{a} \mathrm{{HF}_{I(X)}}(u)$. \\

We have the following result on the affine Hilbert function of an ideal of $K[t_{1},\cdots,t_{s}]$. The proof can be found in \cite{book}, Chapter $9$, section $3$.
\begin{proposition}
Fix a graded monomial ordering $\prec$ on $K[t_{1},\cdots,t_{s}]$, then 
\begin{enumerate}
\item For any ideal $I$ of $K[t_{1},\cdots,t_{s}]$, we have $^{a}\mathrm{{HF}_{I}}(u)=$ $^{a}\mathrm{{HF}_{\langle LT(I) \rangle}}(u)$.
\item If $I$ is a monomial ideal of $K[t_{1},\cdots,t_{s}]$, then $^{a}\mathrm{{HF}_{I}}(u)$ is the number of monomials of degree at most $u$ that does not lie in $I$.
\end{enumerate}
\end{proposition}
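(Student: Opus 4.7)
The plan is to prove (2) first and then derive (1) from it via the division algorithm with respect to a Gröbner basis, with the graded hypothesis on $\prec$ used essentially to keep all degrees bounded by $u$ during reduction.

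For (2), I would invoke the defining property of a monomial ideal: a polynomial $f$ lies in $I$ if and only if each monomial appearing in $f$ with nonzero coefficient belongs to $I$. Writing $N_u$ for the set of monomials of degree $\leq u$ not contained in $I$, this dichotomy immediately yields that the cosets $\{m + I_{\leq u} : m \in N_u\}$ both span $K[t_1,\ldots,t_s]_{\leq u}/I_{\leq u}$ over $K$ and are linearly independent, so $^{a}\mathrm{HF}_I(u) = |N_u|$.

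For (1), fix a Gröbner basis $G=\{g_1,\ldots,g_k\}$ of $I$ with respect to $\prec$ and let $B_u$ denote the set of monomials of degree $\leq u$ that do not lie in $\langle LT(I)\rangle$. I would show that the cosets of $B_u$ form a $K$-basis of $K[t_1,\ldots,t_s]_{\leq u}/I_{\leq u}$. For spanning, given $f$ of degree $\leq u$, apply the division algorithm relative to $G$ to write $f = \sum_j h_j g_j + r$ with no monomial of $r$ divisible by any $LT(g_j)$; because $\prec$ is graded, every monomial appearing in $r$, as well as in each $h_j g_j$, has degree $\leq \deg f$, so $r \in \mathrm{span}_K B_u$ and $f - r \in I_{\leq u}$. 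For linear independence, if a nonzero $K$-linear combination $\sum c_i m_i$ of distinct $m_i \in B_u$ lies in $I_{\leq u}$, its leading monomial under $\prec$ is some $m_j$, which therefore lies in $\langle LT(I)\rangle$, contradicting $m_j \in B_u$. Applying (2) to the monomial ideal $\langle LT(I)\rangle$ then gives $^{a}\mathrm{HF}_I(u) = |B_u| = {}^{a}\mathrm{HF}_{\langle LT(I)\rangle}(u)$.

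The principal obstacle is the degree bookkeeping in the division algorithm: one must verify that neither the remainder $r$ nor the intermediate products $h_j g_j$ involve monomials of degree exceeding $\deg f$. This is precisely where the graded assumption on $\prec$ enters, since in a graded order the leading term controls the total degree; without it, cancellations during reduction could force transient monomials of higher total degree, breaking the identification of $f - r$ as an element of $I_{\leq u}$ rather than merely of $I$.
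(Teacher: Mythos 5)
Your argument is correct, and it is essentially the standard proof: the paper does not supply its own argument but cites Cox--Little--O'Shea (Chapter 9, Section 3), and your route --- part (2) via the monomial-membership criterion, then part (1) by showing the monomials of degree at most $u$ outside $\langle LT(I)\rangle$ give a basis of $K[t_{1},\cdots,t_{s}]_{\leq u}/I_{\leq u}$ using the division algorithm and the graded order for the degree bounds --- is exactly the argument given there. No gaps; the degree bookkeeping you flag is handled correctly since under a graded order every monomial of the remainder and of the products $h_jg_j$ has total degree at most $\deg f$.
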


Another important result is the following proposition which can be found in \cite{ahf}, Lemma $2.1$. A similar statement can be found in \cite{aff}, Corollary $4.5$ and \cite{acc}.
\begin{proposition} {$($\cite{acc}, Proposition $2.2)$}
Let $Y \subseteq K^s$ be a finite set. Then, $|Y|=$ $^{a}\mathrm{{HF}_{Y}}(u)$ for sufficiently large $u$.
\end{proposition}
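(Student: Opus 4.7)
The plan is to identify $ {}^a\mathrm{HF}_Y(u)$ with the rank of an evaluation map on polynomials of bounded degree, and then show this rank stabilizes at $|Y|$ once we can produce enough separating polynomials. Write $Y=\{P_1,\ldots,P_N\}$ with $N=|Y|$. Consider, for each $u\geq 0$, the $K$-linear evaluation map
\begin{equation*}
\phi_u\colon K[t_1,\ldots,t_s]_{\leq u}\longrightarrow K^N,\qquad \phi_u(f):=(f(P_1),\ldots,f(P_N)).
\end{equation*}
The kernel of $\phi_u$ is exactly $I(Y)_{\leq u}$, so by the first isomorphism theorem we get $ {}^a\mathrm{HF}_Y(u)=\dim_K\mathrm{Image}(\phi_u)$. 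In particular this already yields the uniform bound $ {}^a\mathrm{HF}_Y(u)\leq N$ for every $u$.

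Next I would show the reverse inequality for sufficiently large $u$ by constructing, for each $i\in\{1,\ldots,N\}$, a polynomial $f_i$ with $f_i(P_i)=1$ and $f_i(P_j)=0$ for every $j\neq i$. The construction is a multivariate Lagrange-type interpolation: since $P_i\neq P_j$ for $j\neq i$, choose an index $k(i,j)\in\{1,\ldots,s\}$ with $(P_i)_{k(i,j)}\neq(P_j)_{k(i,j)}$, and set
\begin{equation*}
f_i(t_1,\ldots,t_s):=\prod_{j\neq i}\frac{t_{k(i,j)}-(P_j)_{k(i,j)}}{(P_i)_{k(i,j)}-(P_j)_{k(i,j)}}.
\end{equation*}
Then $f_i$ has total degree at most $N-1$, vanishes at every $P_j$ with $j\neq i$, and satisfies $f_i(P_i)=1$. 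Hence $\phi_u(f_i)$ is the $i$-th standard basis vector of $K^N$ for every $u\geq N-1$.

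Combining these, for $u\geq N-1$ the image of $\phi_u$ contains a basis of $K^N$, so $\phi_u$ is surjective and $ {}^a\mathrm{HF}_Y(u)=N=|Y|$. Together with the upper bound this proves the claim, in fact with the explicit threshold $u\geq N-1$. No serious obstacle arises here; the only mildly subtle point is ensuring the separating polynomials have controlled total degree, which the coordinate-by-coordinate interpolation above handles cleanly. (Alternatively, one could invoke the fact that $ {}^a\mathrm{HF}_Y$ is non-decreasing and bounded above by $N$, and conclude stabilization followed by the surjectivity argument only at large $u$; both routes are equivalent.)
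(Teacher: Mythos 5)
Your proof is correct: the identification of $^{a}\mathrm{HF}_{Y}(u)$ with the rank of the evaluation map $\phi_u$, together with the coordinate-wise Lagrange interpolation giving surjectivity for $u\geq |Y|-1$, is a complete argument with an explicit threshold. Note that the paper itself offers no proof of this proposition---it is quoted from the references (\cite{acc}, Proposition 2.2; see also \cite{ahf}, Lemma 2.1)---and your argument is essentially the standard one found there, so there is nothing to flag.
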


\section{Zeroes of square-free polynomials in the affine torus $T=(\mathbb{F}_{q}^{*})^{s} \subseteq \mathbb{A}^{s}$}
 Throughout this section, we take $\prec$  to be the standard graded lexicographic order on $S$ with $t_{s} \prec \cdots \prec t_{2} \prec  t_{1}$.\\
 
For two distinct square-free polynomials $f$ and $g$ in $S$ of degree $d$ in $s$ variables, the following two lemmas give an upper bound on the cardinality of the sets $V_{T}(f)$ and $V_{T}(f) \cap V_{T}(g)$. Lemma $3.2$ has been proved in \cite{hyper}, Proposition $4.3$.  We give another proof of the proposition. First, we need the following lemma from \cite{hyper}. We add the proof for the convenience of the reader. 
 
\begin{lemma}
Let $h$ be a square-free polynomial in $S \backslash \mathbb{F}_{q}$. If $h = (t_{1}-\alpha)h_{1}$ for some $\alpha \in \mathbb{F}_{q}^{*}$ and $h_{1} \in S$, then $h_{1}$ is a square-free polynomial in the variables $t_{2},~t_{3}, \cdots, t_{s}$.
\end{lemma}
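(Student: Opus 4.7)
The plan is to view $h_1$ as a polynomial in $t_1$ with coefficients in $K[t_2,\ldots,t_s]$ and use the square-freeness of the product $(t_1-\alpha)h_1$ to force all the higher $t_1$-coefficients of $h_1$ to vanish. First I would write
\[
h_1 = \sum_{i=0}^{k} g_i\, t_1^{i}, \qquad g_i \in K[t_2,\ldots,t_s],
\]
with $g_k\neq 0$ in case $k\geq 1$. Expanding, I get
\[
h = (t_1-\alpha)h_1 = g_k\, t_1^{k+1} + \sum_{i=1}^{k}(g_{i-1}-\alpha g_i)\, t_1^{i} - \alpha g_0.
\]

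Next I would exploit square-freeness. Every monomial of $h$ has $t_1$-exponent $0$ or $1$, so the coefficient (in $K[t_2,\ldots,t_s]$) of $t_1^j$ in $h$ must be the zero polynomial for every $j\geq 2$. Suppose $k\geq 1$. The top coefficient gives $g_k=0$, contradicting our choice of $k$ unless $k=0$; alternatively, if I do not assume $g_k\neq 0$ a priori, then a descending argument on $i$ shows that $g_k=0$, then $g_{k-1}-\alpha g_k=g_{k-1}=0$, and so on down to $g_1=0$. Either way I conclude $h_1 = g_0 \in K[t_2,\ldots,t_s]$, so $h_1$ is genuinely a polynomial in $t_2,\ldots,t_s$ only.

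It remains to check that $h_1=g_0$ is square-free. Since $h = (t_1-\alpha)g_0 = t_1 g_0 - \alpha g_0$ and the variable $t_1$ does not appear in $g_0$, the set of monomials of $h$ is precisely $\{t_1 m : m \text{ monomial of } g_0\} \cup \{m : m \text{ monomial of } g_0\}$ (possibly with cancellation of the scalar $-\alpha$, but not of the monomials themselves, since the two families are disjoint). Because $h$ is square-free, every such monomial $m$ of $g_0$ is square-free, which is exactly the claim. The only subtle point in the whole argument is keeping the bookkeeping straight in the descending induction that rules out positive powers of $t_1$ in $h_1$; once that is in place, square-freeness of $h_1$ is immediate from the square-freeness of $h$.
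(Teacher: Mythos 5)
Your argument is correct and is essentially the paper's own proof: both rule out any positive power of $t_1$ in $h_1$ by observing that the highest $t_1$-term of $(t_1-\alpha)h_1$ cannot cancel (you phrase this via the $t_1$-adic coefficients $g_j$ and the vanishing of the coefficient of $t_1^j$ for $j\geq 2$, the paper via a monomial of $h_1$ carrying the maximal $t_1$-exponent), and then both deduce square-freeness of $h_1$ from that of $h=t_1h_1-\alpha h_1$ because the monomials of $t_1h_1$ and of $\alpha h_1$ are disjoint, so no cancellation occurs. No gaps.
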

\begin{proof}
Let $h_{1}=\sum_{i=1}^{w}\beta_{i}f_{i}$ where $\beta_{i} \in \mathbb{F}_{q}^{*}$, $1 \leq i \leq w$ and $f_{1},~f_{2},\cdots,f_{w}$ are distinct monomials. Then
\begin{equation}
h= \beta_{1}t_{1}f_{1}+\cdots + \beta_{w}t_{1}f_{w}- \alpha \beta_{1}f_{1}-\cdots- \alpha \beta_{w}f_{w}.
\end{equation} 
Assume that $t_{1}$ divides $f_{j}$ for some $1 \leq j \leq w$ and choose $j$ and $n\geq 1 $ such that $t_{1}^{n}$ divides $f_{j}$ and $t_{1}^{n+1}$ does not divides $f_{i}$ for $i=1,\cdots, w$. As $h$ is square-free, by equation $(3.1)$, the monomial $t_{1}f_{j}$ must be equal to $f_{l}$ for some $1\leq l \leq w$, a contradiction because $t_{1}^{n+1}$ does not divides $f_{l}$. This shows that $h_{1}$ is a polynomial in the variables $t_{2},~t_{3},\cdots,t_{s}$. Hence $t_{1}f_{1},~t_{1}f_{2},\cdots, t_{1}f_{w}, f_{1},~f_{2},\cdots, f_{w}$ are distinct monomials. As $h$ is square-free, by equation $(3.1)$, $f_{i}$ is square-free for $i = 1,~2,\cdots, w$, i.e.\  $h_{1}$ is square-free.\\
\end{proof} 
 
\begin{lemma}
Let $s \geq 2$ and $1 \leq d \leq s$. For any non-zero square-free polynomial $g$ of degree $d$ in $\mathbb{F}_{q}[t_{1},t_{2},\cdots,t_{s}]$, we have $$| V_{T}(g)| \leq (q-1)^{s}-(q-2)^{d}(q-1)^{s-d}.$$
\end{lemma}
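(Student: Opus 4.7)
The plan is to prove the equivalent inequality $N(g) := |T \setminus V_T(g)| \geq (q-2)^d (q-1)^{s-d}$, by induction on $d$ (allowing any $s \geq d \geq 1$, which comfortably covers the stated hypotheses). The base case $d=1$ is a direct computation: after relabeling the variables we may assume that the coefficient $a$ of $t_1$ in $g$ is nonzero, so $g = a t_1 + g_0$ with $g_0 \in K[t_2,\ldots,t_s]$; for each $y \in (\mathbb{F}_q^*)^{s-1}$ the unique root $-g_0(y)/a$ of $g(t_1,y)$ lies in $\mathbb{F}_q^*$ precisely when $g_0(y) \neq 0$, leaving at least $q-2$ non-vanishing values of $t_1 \in \mathbb{F}_q^*$. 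Summing over $y$ gives $N(g) \geq (q-2)(q-1)^{s-1}$.

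For the inductive step with $d \geq 2$, I would choose a monomial of $g$ of maximal degree $d$ and, after relabeling, assume that $t_1$ divides it. Viewing $g$ as a polynomial in $t_1$ and using that $g$ is square-free, we may write $g = t_1 g_1 + g_0$ where $g_1,\, g_0 \in K[t_2,\ldots,t_s]$ are both square-free (the monomials of $g_1$ are of the form $m/t_1$ for $m$ a square-free monomial of $g$ divisible by $t_1$, hence square-free in $t_2,\ldots,t_s$; likewise for $g_0$), and the choice of $t_1$ from a top-degree monomial forces $\deg g_1 = d-1$. For each $y \in T' := (\mathbb{F}_q^*)^{s-1}$, whenever $g_1(y) \neq 0$ the polynomial $g(t_1,y)$ is a nonzero linear polynomial in $t_1$ with unique root $-g_0(y)/g_1(y) \in \mathbb{F}_q$, so the number of $t_1 \in \mathbb{F}_q^*$ with $g(t_1,y) \neq 0$ is at least $q-2$. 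This yields
\[
N(g) \;\geq\; (q-2)\cdot \#\{y \in T' : g_1(y) \neq 0\}.
\]

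Applying the inductive hypothesis to $g_1$ — a nonzero square-free polynomial of degree $d-1$ in $s-1$ variables with $s-1 \geq d-1$ — gives $\#\{y \in T' : g_1(y) \neq 0\} \geq (q-2)^{d-1}(q-1)^{s-d}$, and therefore $N(g) \geq (q-2)^d(q-1)^{s-d}$, which is the required bound. The one subtle point — and the only step that needs real attention — is ensuring that the induction actually decreases $d$: this is precisely why the distinguished variable $t_1$ must be chosen from a \emph{top}-degree monomial, so that $g_1$ has degree exactly $d-1$ (not less). The slice analysis of $g(t_1,y)$ itself is a univariate square-free argument in the spirit of Lemma 3.1 and is uniform in $y$, so no further case distinctions are needed.
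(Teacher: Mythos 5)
Your proof is correct, and it takes a genuinely different route from the paper. The paper proves the bound by induction on the number of variables $s$: the base case $s=2$ is settled by exhaustive tables, and the inductive step splits into cases according to whether $(t_1-\alpha)$ divides $g$ (using Lemma 3.1 to factor and then inclusion--exclusion) or not (slicing $t_1=\beta_i$ and sub-casing on whether the slice drops the degree). You instead induct on the degree $d$, using the decomposition $g = t_1 g_1 + g_0$ with $t_1$ chosen from a top-degree monomial; square-freeness makes $g_1, g_0$ polynomials in $t_2,\dots,t_s$ with $\deg g_1 = d-1$ exactly, and on each slice $y$ with $g_1(y)\neq 0$ the polynomial $g(t_1,y)$ is linear in $t_1$, giving at least $q-2$ non-vanishing values; the count of such $y$ is then handled by the inductive hypothesis. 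This is sound (including the extension to all $s \geq d \geq 1$ needed when $s=d$), avoids the base-case tables, Lemma 3.1, and all of the paper's case distinctions, and is arguably cleaner for this single-polynomial statement. What the paper's slicing-with-cases template buys in exchange is that it extends to the intersection bound of Lemma 3.3 for two polynomials, where your ``linear in $t_1$ on good slices'' trick does not obviously carry over.
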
 
\begin{proof}
We prove this lemma by induction on $s$. For $s=2$, we have either $d=1$ or $d=2$. \\
When $d=1$, we have to show that $|V_{T}(g)|\leq (q-1)$. By direct calculations, we obtain the following table, where $\lambda, \mu, \delta \in \mathbb{F}_{q}^{*}$,\\
 \begin{center}
\begin{tabular}{ |l|l| } 
 \hline
  $g$ & $| V_{T}(g)|$\\ 
 \hline
  $\lambda t_{1}+ \mu $ & $q-1$  \\ 
  $\lambda t_{1}$ &$0$\\
  $\lambda t_{2}$ &$0$\\
  $\lambda t_{2}+ \mu $ & $q-1$\\
  $\lambda t_{1}+ \mu t_{2}$ & $q-1$\\
  $\lambda t_{1}+ \mu t_{2}+\delta $ & $q-2$\\
 \hline
\end{tabular}
\end{center}
The first column contains the various choices of polynomial $g$ in two variables $t_{1}$, $t_{2}$ of degree one and the second column specifies the number of zeroes in $T$ of the corresponding polynomial. From the above table, we have $|V_{T}(g)|\leq (q-1)$.\\\\
When $d=2$, we have to show that $|V_{T}(g)| \leq 2q-3.$ By direct calculations, we have the following table, where $\lambda, \mu, \delta, \rho \in \mathbb{F}_{q}^{*}$,\\
 \begin{center}
\begin{tabular}{ |l|l| } 
 \hline
  $g$ & $|V_{T}(g)|$\\ 
 \hline
 $\lambda t_{1}t_{2}$ & $0$  \\ 
  $\lambda t_{1}t_{2}+ \mu $ & $q-1$  \\ 
  $\lambda t_{1}t_{2}+\mu t_{1}$ &$q-1$\\
  $\lambda t_{1}t_{2}+\mu t_{1}+\delta$ &$q-2$\\
  $\lambda t_{1}t_{2}+ \mu t_{2} $ & $q-1$\\
  $\lambda t_{1}t_{2}+ \mu t_{2}+\delta$ & $q-2$\\
  $\lambda t_{1}t_{2}+ \mu t_{1}+\delta t_{2}+\rho $ & $\leq 2q-3$\\
 \hline
\end{tabular}
\end{center}
From the above table, we have $|V_{T}(g)|\leq (2q-3)$. Thus, the lemma is true for $s=2$. Now, we assume that $s \geq 3$. We consider the following two cases:
\begin{itemize}
\item If $g(\alpha,t_{2},\cdots,t_{s})=0$ for some $\alpha \in \mathbb{F}_{q}^{*}$, then $g=(t_{1}-\alpha)h+h_{1}$ where no term of $h_{1}$ is divisible by $t_{1}$. Putting $t_{1}=\alpha$, we get that $h_{1}$ is the zero polynomial. Thus, $g=(t_{1}-\alpha)h$. If deg $g=1$, then $h \in \mathbb{F}_{q}^{*}$ and $$| V_{T}(g)|=(q-1)^{s-1} \leq (q-1)^{s}-(q-2)^{d}(q-1)^{s-d}.$$ Therefore, we assume that deg $g \geq 2$.  By Lemma $3.1$, $h$ is square-free polynomial in $t_{2},~t_{3},\cdots,t_{s}$ and also we have deg $h=d-1$.  Let $T':=(\mathbb{F}_{q}^{*})^{s-1}$. Then, by induction hypothesis
\begin{align*}
|V_{T}(g)| &=|V_{T}(t_{1}-\alpha)|+|V_{T}(h)|-| V_{T}(t_{1}-\alpha) \cap V_{T}(h)| \\
&=(q-1)^{s-1}+(q-2)|V_{T'}(h)|\\
&\leq (q-1)^{s-1}+(q-2)[(q-1)^{s-1}-(q-2)^{d-1}(q-1)^{s-d}]\\
&=(q-1)^{s-1}+(q-2)(q-1)^{s-1}-(q-2)^{d}(q-1)^{s-d}\\
&=(q-1)^{s}-(q-2)^{d}(q-1)^{s-d}.\\
\end{align*}
\item If $g(\alpha,t_{2},\cdots,t_{s}) \neq 0$ for any $\alpha \in \mathbb{F}_{q}^{*}$, then let $\mathbb{F}_{q}^{*}:=\{\beta_{1},~\beta_{2},\cdots,\beta_{q-1} \}$. For $1 \leq i \leq q-1$, define $g_{i}(t_{2},~t_{3},\cdots,t_{s}):=g(\beta_{i},t_{2},\cdots,t_{s})$. We have the following inclusion
$$V_{T}(g) \hookrightarrow \cup_{i=1}^{q-1} (\{\beta_{i}\} \times V_{T'}(g_{i})),~a \mapsto a.$$ Therefore $|V_{T}(g)|\leq \sum_{i=1}^{q-1} |V_{T'}(g_{i})| $. For each $i$, $1 \leq i \leq q-1$, we have the following cases.
\begin{enumerate}
\item If each term of degree $d$ in $g$ contains $t_{1}$, then $g_{i}$ is a square-free polynomial in $s-1$ variables of degree $d-1$.
\item If there exists a term of degree $d$ in $g$ not containing $t_{1}$, then $g_{i}$ is a square-free polynomial in $s-1$ variables of degree $d$.\\
\end{enumerate}
Now, if each $g_{i}$ is of type $(1)$, then 
\begin{align*}
|V_{T}(g)|&\leq \sum_{i=1}^{q-1} |V_{T'}(g_{i})|\\
& \leq (q-1)[(q-1)^{s-1}-(q-2)^{d-1}(q-1)^{s-d}]\\
&=(q-1)^{s}-(q-2)^{d-1}(q-1)^{s-d+1}\\
&\leq (q-1)^{s}-(q-2)^{d}(q-1)^{s-d},
\end{align*}
as $(q-2)^{d-1}(q-1)^{s-d+1} \geq (q-2)^{d}(q-1)^{s-d}$. But if there exists atleast one $g_{i}$ of type $(2)$, then using the fact that $(q-1)^{s}-(q-2)^{d}(q-1)^{s-d} \geq (q-1)^{s}-(q-2)^{d'}(q-1)^{s-d'},\text{ for } d>d',$ we have 
\begin{align*}
|V_{T}(g)| &\leq \sum_{i=1}^{q-1} |V_{T'}(g_{i})| \\
& \leq (q-1)[(q-1)^{s-1}-(q-2)^{d}(q-1)^{s-d-1}]\\
&=(q-1)^{s}-(q-2)^{d}(q-1)^{s-d}.\\
\end{align*}
\end{itemize}
\end{proof}

\begin{lemma}
For $s \geq 2$ and $d < s$, let $f$ and $g$ be two distinct non-zero square-free polynomials of degree $d$ in $\mathbb{F}_{q}[t_{1},t_{2},\cdots,t_{s}]$. Then $$| V_{T}(f) \cap V_{T}(g)|\leq (q-1)^{s}-q(q-2)^{d}(q-1)^{s-d-1}.$$
\end{lemma}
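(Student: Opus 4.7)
I would mirror the inductive structure used in the proof of Lemma 3.2, now tracking two polynomials simultaneously. The argument is by induction on $s$. For the base case $s=2$, the hypothesis $d<s$ forces $d=1$; two distinct affine lines in $\mathbb{A}^2$ meet in at most one point, and $(q-1)^2-q(q-2)=1$, so the claim is immediate.

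For the inductive step ($s\ge 3$), I would slice by the first coordinate: set $T'=(\mathbb{F}_q^*)^{s-1}$ and $f_\alpha(t_2,\dots,t_s):=f(\alpha,t_2,\dots,t_s)$, similarly $g_\alpha$, so that
\[
|V_T(f)\cap V_T(g)|\;=\;\sum_{\alpha\in\mathbb{F}_q^*}|V_{T'}(f_\alpha)\cap V_{T'}(g_\alpha)|.
\]
Since $f$ (resp.\ $g$) is square-free, $f_\alpha\equiv 0$ iff $(t_1-\alpha)\mid f$, which can occur for at most one $\alpha$. This splits the argument into three cases: (a) both $f$ and $g$ share a common factor $(t_1-\alpha_0)$; (b) only one of them has such a factor; (c) neither has such a factor. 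In case (a), I would write $f=(t_1-\alpha_0)f'$ and $g=(t_1-\alpha_0)g'$ and invoke Lemma 3.1 to conclude that $f',g'$ are linearly independent square-free polynomials of degree $d-1$ in $t_2,\dots,t_s$, then decompose $V_T(f)\cap V_T(g)=V_T(t_1-\alpha_0)\cup[V_T(f')\cap V_T(g')]$ to obtain
\[
|V_T(f)\cap V_T(g)|=(q-1)^{s-1}+(q-2)\,|V_{T'}(f')\cap V_{T'}(g')|,
\]
and apply the inductive hypothesis in $s-1$ variables of degree $d-1$; the arithmetic collapses to the target bound. In case (c), for each $\alpha$ I would apply induction when $f_\alpha,g_\alpha$ are linearly independent and of the same degree, and Lemma 3.2 on the smaller-degree restriction otherwise, checking that each slice contributes at most $(q-1)^{s-1}-q(q-2)^d(q-1)^{s-d-2}$ so that summing over $q-1$ slices yields the claim. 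Case (b) interpolates between (a) and (c) using Lemma 3.2 on the single vanishing slice.

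The hardest step, and the real obstacle, is the slice-level estimate in case (c). Each $f_\alpha$ has degree either $d$ or $d-1$, and the same for $g_\alpha$; when the two degrees disagree or when $f_\alpha,g_\alpha$ are scalar multiples at some $\alpha$, the naive single-polynomial bound from Lemma 3.2 is not quite tight enough on its own, and one must verify numerically that $(q-1)^2\ge q(q-2)$ and $(q-1)\ge(q-2)$ are exactly the inequalities needed to compensate. A further edge case is $d=s-1$: then $f_\alpha$ can have degree $s-1$ in $s-1$ variables, so the inductive hypothesis does not apply to that slice; here one exploits that only the single monomial $t_2t_3\cdots t_s$ can survive the restriction and treats these slices by Lemma 3.2 together with Lemma 3.1. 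Once these degeneracies are absorbed, the sum over $\alpha\in\mathbb{F}_q^*$ telescopes to $(q-1)^s-q(q-2)^d(q-1)^{s-d-1}$, completing the induction.
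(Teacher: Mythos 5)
Your plan follows essentially the paper's own route: induction on $s$, the base case $s=2$, $d=1$ by direct counting, and an inductive step that slices along $t_1$, splits according to whether $f$ and/or $g$ is divisible by some $t_1-\alpha$ (using Lemma 3.1), bounds each slice by Lemma 3.2 or the induction hypothesis, and treats $d=s-1$ by noting that both restrictions have the same top monomial $t_2\cdots t_s$, so a suitable combination drops the degree — exactly the paper's replacement $g_i'=f_i-\frac{LC(f_i)}{LC(g_i)}g_i$. Your exact partition $|V_T(f)\cap V_T(g)|=\sum_{\alpha}|V_{T'}(f_\alpha)\cap V_{T'}(g_\alpha)|$ is cleaner bookkeeping than the paper's union/inclusion–exclusion manipulations, and the two inequalities $(q-1)^2\ge q(q-2)$ and $q-1\ge q-2$ are indeed the ones the paper leans on.

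Two concrete gaps, though. First, your trichotomy (a)/(b)/(c) omits the case $f=(t_1-\alpha)f_1$, $g=(t_1-\beta)g_1$ with $\alpha\ne\beta$, which the paper handles as a separate case; it fits your framework (two degenerate slices bounded via Lemma 3.2 on $g_1$, resp.\ $f_1$, the other $q-3$ slices by induction, closed by the same two inequalities), but it must be covered. Second, and more seriously, your claim that those numerical inequalities compensate when $f_\alpha$ and $g_\alpha$ are scalar multiples is wrong in the one subcase that matters: if the proportional slice has full degree $d$, Lemma 3.2 only gives $(q-1)^{s-1}-(q-2)^d(q-1)^{s-1-d}$, while your per-slice budget is $(q-1)^{s-1}-q(q-2)^d(q-1)^{s-d-2}$, and the needed comparison amounts to $q-1\ge q$, which fails. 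What rescues this is a structural move, not arithmetic: if $f_\alpha=c\,g_\alpha$, replace $g$ by $f-cg$, which is square-free, nonzero when $f,g$ are linearly independent, and divisible by $t_1-\alpha$, so you fall back into the earlier cases (this is the same subtraction idea the paper deploys in its $d=s-1$ subcase, where equal leading monomials are forced). Note this degeneracy is precisely where mere distinctness of $f$ and $g$ is insufficient — for proportional $f,g$ the stated bound is actually false — so your base-case phrase ``two distinct affine lines'' implicitly assumes linear independence (as does the paper's own case list); with that assumption made explicit and the reduction above inserted, your plan goes through.
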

\begin{proof}
We prove this lemma by induction on $s$. When $s=2$, $d=1$ and we have to show that for any two distinct square-free polynomials $f$ and $g$ in two variables of degree one, we have $|V_{T}(f) \cap V_{T}(g)| \leq 1$. For $\lambda, \mu, \delta, \alpha, \beta \in \mathbb{F}_{q}^{*}$, we obtain the following table by direct calculations. \\
\begin{center}
\begin{tabular}{ |l|l|l| } 
 \hline
  $f$ & $g$ & $| V_{T}(f) \cap V_{T}(g)|$\\ 
 \hline
 $\lambda t_{1}+\mu$ & $\alpha t_{1}$ & $0$\\
 $\lambda t_{1}+\mu$ & $\alpha t_{2}$ & $0$\\
 $\lambda t_{1}+\mu$ & $\alpha t_{2}+\beta$ & $1$\\
 $\lambda t_{1}+\mu$ & $\alpha t_{1}+\beta t_{2}$ & $1$\\
 $\lambda t_{1}+\mu$ & $\alpha t_{1}+\beta t_{2}+\delta$ & $\leq 1$\\
 $\lambda t_{1}$ & $g$ & $0$\\
 $\lambda t_{2}$ & $g$ & $0$\\
 $\lambda t_{2}+\mu$ & $\alpha t_{1}+\beta t_{2}$ & $1$\\
 $\lambda t_{2}+\mu$ & $\alpha t_{1}+\beta t_{2}+\delta$ & $\leq 1$\\
 $\lambda t_{1}+\mu t_{2}$ & $\alpha t_{1}+\beta t_{2}+\delta$ & $\leq 1$\\
 \hline
\end{tabular}
\end{center}
From the above table, we have $|V_{T}(f) \cap V_{T}(g)| \leq 1$. Thus, the lemma is true for $s=2$. So, we assume that $s \geq 3$. Let $T':=(\mathbb{F}_{q}^{*})^{s-1}$. To prove the lemma we consider the following cases.
\begin{itemize}
\item If $f=(t_{1}-\alpha)f_{1}$ and $g=(t_{1}-\alpha)g_{1}$ for some $\alpha \in \mathbb{F}_{q}^{*}$. Note that if $d=1$, then $f$ and $g$ are equal. So, we assume that $d \geq 2$. Now by induction hypothesis
\begin{align*}
|V_{T}(f) \cap V_{T}(g)| &=|V_{T}(t_{1}-\alpha)| +| V_{T}(f_{1}) \cap V_{T}(g_{1})| - |V_{T}(t_{1}-\alpha) \cap V_{T}(f_{1}) \cap V_{T}(g_{1})|\\
&=(q-1)^{s-1}+(q-2) |V_{T'}(f_{1}) \cap V_{T'}(g_{1})|\\
& \leq (q-1)^{s-1}+(q-2)[(q-1)^{s-1}-q(q-2)^{d-1}(q-1)^{s-d-1}]\\
&=(q-1)^{s}-q(q-2)^{d}(q-1)^{s-d-1}.\\
\end{align*}

\item If $f=(t_{1}-\alpha)f_{1}$ and $g=(t_{1}-\beta)g_{1}$ for some $\alpha,\beta \in \mathbb{F}_{q}^{*}$ with $\alpha \neq \beta$. If $d=1$, then $f_{1}, g_{1} \in \mathbb{F}_{q}^{*}$ and $|V_{T}(f) \cap V_{T}(g)|=0 \leq (q-1)^{s}-q(q-2)^{d}(q-1)^{s-d-1}.$ So we assume that $d \geq 2$, then by inclusion-exclusion principle
\begin{align*}
|V_{T}(f) \cap V_{T}(g)|&=|(V_{T}(t_{1}-\alpha) \cup V_{T}(f_{1})) \cap (V_{T}(t_{1}-\beta) \cup V_{T}(g_{1})) |\\
&=|V_{T}(t_{1}-\alpha)\cap V_{T}(g_{1})| + | V_{T}(t_{1}-\beta)\cap V_{T}(f_{1})| + | V_{T}(f_{1})\cap V_{T}(g_{1})|\\
&\hspace{0.4 cm}-| V_{T}(t_{1}-\alpha)\cap V_{T}(g_{1}) \cap V_{T}(f_{1})|-| V_{T}(t_{1}-\beta)\cap V_{T}(f_{1}) \cap V_{T}(g_{1})|\\
&=|V_{T'}(g_{1})|+ | V_{T'}(f_{1})| +(q-3) |V_{T'}(f_{1}) \cap V_{T'}(g_{1})| \\
& \leq 2[(q-1)^{s-1}-(q-2)^{d-1}(q-1)^{s-d}]+(q-3)[(q-1)^{s-1}\\
&\hspace{7 cm}-q(q-2)^{d-1}(q-1)^{s-d-1}]\\
&=(q-1)^{s}-(q-2)^{d-1}(q-1)^{s-d-1}(q^{2}-q-2)\\
&\leq (q-1)^{s}-q(q-2)^{d}(q-1)^{s-d-1},
\end{align*}
as $(q-2)^{d-1}(q-1)^{s-d-1}(q^{2}-q-2) \geq q(q-2)^{d}(q-1)^{s-d-1}$.\\

\item If $(t_{1}-\alpha) \nmid f$ for any $\alpha \in \mathbb{F}_{q}^{*}$ but $g=(t_{1}-\beta)g_{1}$ for some $\beta \in \mathbb{F}_{q}^{*}$, then let $\mathbb{F}_{q}^{*}:=\{ \beta_{1},~\beta_{2},\cdots, \beta_{q-1}\}$ and for each i, $1 \leq i \leq q-1$, set $f_{i}(t_{2},~t_{3},\cdots,t_{s}):=f(\beta_{i},t_{2},\cdots t_{s})$ and $\beta=\beta_{j}$ for some $1 \leq j \leq q-1$. Thus, we have
\begin{align*}
| V_{T}(f) \cap V_{T}(g)| &\leq | (\cup_{i=1}^{q-1} V_{T'}(f_{i})) \cap V_{T}((t_{1}-\beta)g_{1})| \\
&=| (\cup_{i=1}^{q-1} V_{T'}(f_{i})) \cap V_{T}(t_{1}-\beta)| + | (\cup_{i=1}^{q-1} V_{T'}(f_{i})) \cap V_{T}(g_{1})|\\
&\hspace{3.5 cm}-|(\cup_{i=1}^{q-1} V_{T'}(f_{i})) \cap V_{T}(t_{1}-\beta) \cap V_{T}(g_{1})|\\
&=|V_{T'}(f_{j})| +\sum_{i=1}^{q-1} | V_{T'}(f_{i}) \cap V_{T'}(g_{1})| -| V_{T'}(f_{j}) \cap V_{T_{1}}(g_{1})|.\\
&\leq | V_{T'}(f_{j})| +(q-2)|V_{T'}(g_{1})|.\\
& \leq (q-1)^{s-1}-(q-2)^{d}(q-1)^{s-d-1}+(q-2)[(q-1)^{s-1}\\
&\hspace{6 cm}-(q-2)^{d-1}(q-1)^{s-d}]\\
&=(q-1)^{s}-(q-2)^{d}(q-1)^{s-d-1}[q-1+1]\\
&=(q-1)^{s}-q(q-2)^{d}(q-1)^{s-d-1}.\\
\end{align*}
\item If $(t_{1}-\alpha) \nmid f$ and $(t_{1}-\alpha) \nmid g$ for any $\alpha \in \mathbb{F}_{q}^{*}$, then for $1 \leq i \leq q-1$, set $f_{i}(t_{2}, \cdots,t_{s}):=f(\beta_{i},t_{2},\cdots,t_{s})$ and $g_{i}(t_{2}, \cdots, t_{s}):=g(\beta_{i},t_{2},\cdots,t_{s})$. Thus, we have
\begin{equation}
|V_{T}(f) \cap V_{T}(g)| \leq \sum_{i=1}^{q-1} | V_{T'}(f_{i}) \cup V_{T'}(g_{i})|.
\end{equation}

Now for each $i$, $1 \leq i \leq q-1$, we have the following cases.\\
\begin{enumerate}
\item If $deg~f_{i}=deg~g_{i}=d-1.$ If $d=1$ then $f_{i},g_{i} \in \mathbb{F}_{q}^{*}$ and $|V_{T'}(f_{i})\cap V_{T'}(g_{i})|=0$. So, we assume $d \geq 2$. Then by induction hypothesis
\begin{align*}
|V_{T'}(f_{i})\cap V_{T'}(g_{i})| &\leq (q-1)^{s-1}-q(q-2)^{d-1}(q-1)^{s-d-1}.\\
\end{align*}

\item If one of $f_{i}$ and $g_{i}$ has degree $d-1$. Let us call it $g'_{i}$, then by Lemma $3.2$
\begin{align*}
|V_{T'}(f_{i})\cap V_{T'}(g_{i})| &\leq |V_{T'}(g'_{i}) \mid\\
& \leq (q-1)^{s-1}-(q-2)^{d-1}(q-1)^{s-d}.\\
\end{align*}

\item If $deg~f_{i}=deg~g_{i}=d$ and $d<s-1$, then by induction hypothesis
\begin{align*}
|V_{T'}(f_{i})\cap V_{T'}(g_{i})| &\leq (q-1)^{s-1}-q(q-2)^{d}(q-1)^{s-d-2}.\\
\end{align*}

\item If $deg~f_{i}=deg~g_{i}=d$ and $d=s-1$, then $f_{i}$ and $g_{i}$ are square-free polynomials of degree $d$ in $d$ variables so the leading monomial of $f_{i}$ and $g_{i}$ are equal. We construct square-free polynomials $f'_{i}$ and $g'_{i}$ as follows.
$$f'_{i}:=f_{i} \text{ and } g'_{i}:=f_{i}-\frac{LC(f_{i})}{LC(g_{i})} g_{i}.$$
Then, $g'_{i}$ is a square-free polynomial of degree $d' \leq d-1$ in $s-1$ variables. Also, $V_{T'}(f_{i},g_{i})=V_{T'}(f'_{i},g'_{i})$. Then
\begin{align*}
|V_{T'}(f_{i})\cap V_{T'}(g_{i})| & \leq |V_{T'}(g'_{i})|\\
&\leq (q-1)^{s-1}-(q-2)^{d'}(q-1)^{s-d'-1}\\
&\leq (q-1)^{s-1}-(q-2)^{d-1}(q-1)^{s-d}.\\
\end{align*}
\end{enumerate}
Therefore, in equation $(3.2)$ we have if $d<s-1$ then 
\begin{align*}
|V_{T}(f) \cap V_{T}(g)| &\leq \sum_{i=1}^{q-1} | V_{T'}(f_{i}) \cup V_{T'}(g_{i})|\\
&\leq (q-1)[(q-1)^{s-1}-q(q-2)^{d}(q-1)^{s-d-2}]\\
&\leq (q-1)^{s}-q(q-2)^{d}(q-1)^{s-d-1}.
\end{align*}
But if $d=s-1$ then from cases $(1),(2)$ and $(4)$, we get 
\begin{align*}
|V_{T}(f) \cap V_{T}(g)| &\leq \sum_{i=1}^{q-1} | V_{T'}(f_{i}) \cup V_{T'}(g_{i})|\\
&\leq (q-1)[(q-1)^{s-1}-(q-2)^{s-2}(q-1)]\\
&\leq (q-1)^{s}-(q-2)^{s-2}(q-1)^{2}\\
&\leq (q-1)^{s}-q(q-2)^{s-1}=(q-1)^{s}-q(q-2)^{d}(q-1)^{s-d-1}.
\end{align*}
\end{itemize}
\end{proof}

Extending Lemma $3.2$ and $3.3$ for three or more square-free polynomials in $S$ requires dealing with many cases and is tiresome. Also, these lemmas calculate the cardinality of the sets when all the polynomials are of degree $d$. In the remaining part of this section, we give an upper bound on the number of zeroes in the affine torus $T=(\mathbb{F}_{q}^{*})^{s}$ of square-free polynomials of degree $d$. Then we obtain an upper bound on the number of zeroes in $T$ of square-free polynomials of degree at most $d$. \\
 
Let $s\geq 2$, $1 \leq d \leq s$ be fixed and $1 \leq r \leq {s \choose d}$. Let $f_{1},f_{2},\cdots,f_{r} \in KV_{\leq d}$ be linearly independent polynomials of degree $d$. We assume that their leading monomials are distinct. To calculate the number of zeroes of $f_{1},f_{2},\cdots,f_{r}$ in $T$ we use the ideas of \cite{acc}.\par

We have $I=I(T)$ is the vanishing ideal of $T$ in $S$. The set $\{ t_{i}^{q-1}-1 ~:~ i=1, \cdots, s \}$ is a Groebner basis of $I$. The ideal $L:=\langle LT(I)\rangle$ is generated by the set $\{t_{i}^{q-1} ~:~ i=1, \cdots, s \}.$ Let $J:=LT(I(V_{T}(f_{1},~f_{2},\cdots,f_{r})))$ and for $i=1,~2, \cdots, r$, let $t^{a_{i}}=t_{1}^{a_{i,1}} t_{2}^{a_{i,2}}\cdots t_{s}^{a_{i,s}}:=LT(f_{i})$. Consider the ideal $\mathcal{A}:=\langle t_{1}^{q-1},~t_{2}^{q-1},\cdots, t_{s}^{q-1}, t^{a_{1}},~t^{a_{2}},\cdots, t^{a_{r}} \rangle$. From Proposition $2.7$ and $2.8$  we have  \begin{center}$| V_{T}(f_{1},\cdots,f_{r})|=$ $^{a} \mathrm{{HF}_{J}}(u)\leq$ $^{a}\mathrm{{HF}_{\mathcal{A}}}(u),$\\\end{center}
for all sufficiently large $u$. Thus, our next goal is to calculate $^{a}\mathrm{{HF}_{\mathcal{A}}}(u)$. Before that, we introduce the following notations as in \cite{acc}.
\begin{definition}
Let $k:=s(q-2)$.
\begin{enumerate}
\item $F:=(\{0,1,\cdots, q-2\})^{s}$ and $G:=(\{0,1\})^{s}$.
\item For $b:=(b_{1},~b_{2},\cdots,b_{s}) \in F$, define $deg(b)=b_{1}+~b_{2}\cdots+b_{s}$.
\item For $u \leq k$, define $F_{u}:=\{b \in F:~ deg(b)=u\}$ and  $F_{\leq u}:=\{b \in F:~ deg(b)\leq u\}$.
\item $(b_{1},~b_{2},\cdots,b_{s}) \leq_{P} (c_{1},~c_{2},\cdots,c_{s})$ if and only if $b_{1} \leq c_{1},~b_{2} \leq c_{2}, \cdots, b_{s} \leq c_{s}$.
\item For $H \subseteq F$, define shadow of $H$ as
$${\nabla}_{F} (H):=\{a \in F : b \leq_{P} a \text{ for some } b \in H\}.$$
\end{enumerate}
\end{definition}


Following the idea as in \cite{acc}, we write $\mathcal{A}$ as $\mathcal{A}=\mathcal{A}_{1}+\mathcal{A}_{2}$ where $\mathcal{A}_{1}=\langle t_{1}^{q-1},~t_{2}^{q-1}, \cdots, t_{s}^{q-1} \rangle$ and $\mathcal{A}_{2}=\langle t^{a_{1}},~t^{a_{2}}, \cdots, t^{a_{s}} \rangle$. Then any monomial $t^{b}:=t_{1}^{b_{1}}t_{2}^{b_{2}}\cdots t_{s}^{b_{s}}$ that doesn't belong to $\mathcal{A}_{1}$ has $b_{i} \leq q-2$, for all $i$, $1 \leq i\leq s$. Now, if $M_{\mathcal{A}_{1}}$ denotes the set of monomials that does not belong to $\mathcal{A}_{1}$, then $M_{\mathcal{A}_{1}}$ is in bijection with the set $F$. From Proposition $2.7$, $t^{b} \in M_{\mathcal{A}_{1}}$ will belong to $\mathcal{A}_{2}$ if and only if $t^{a_{j}} \mid t^{b}$ for some $j$, $1 \leq j \leq r$ i.e.\ $a_{j}\leq_{P} b$. Also, $\{a_{1},~a_{2},\cdots ,a_{r}\} \subseteq G_{d}$. Thus, we have
$${^{a} \mathrm{HF}_{\mathcal{A}}}(u)=| F \backslash \nabla_{F}(a_{1},\cdots,a_{r})|,$$ where $u \geq k$. Hence,
\begin{equation}
|V_{T}(f_{1},\cdots f_{r})|\leq max~\{~| F \backslash \nabla_{F}(a_{1},~a_{2},\cdots,a_{r})|~ :~ a_{1},~a_{2}, \cdots, a_{r} \in G_{d}\}.\\
\end{equation}

The following lemma gives a lower bound on $| \nabla_{F}(\{a_{1},~a_{2},\cdots,a_{r}\})|$ for $a_{1},~a_{2},\cdots,a_{r} \in G_{d}$.
\begin{lemma}
Let $s\geq 2$, $1 \leq d \leq s$ and $1\leq r \leq {s \choose d}$. If $d+r-2<s$ then for any $B=\{a_{1},~a_{2},\cdots,a_{r}\}\subseteq G_{d}$ with $|B|=r$, we have 
$$|\nabla_{F}(B)|~ \geq (q-2)^{d-1} (q-1)^{s-d-r+1}[(q-1)^{r}-1].$$
\end{lemma}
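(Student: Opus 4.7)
The plan is to prove this by induction on $s$, in the same recursive spirit as the proofs of Lemmas~3.2 and~3.3. Writing $S_{i} := \{j \in [s] : a_{i}(j)=1\} \in \binom{[s]}{d}$, the starting point is that a point $b \in F$ with $T := \{j : b_{j} \neq 0\}$ lies in $\nabla_{F}(B)$ precisely when $T \supseteq S_{i}$ for some $i$, and that there are $(q-2)^{|T|}$ points of $F$ with a given support $T$; consequently
\[
|\nabla_{F}(B)| \;=\; \sum_{T \subseteq [s],\; T \supseteq S_{i}\text{ for some } i} (q-2)^{|T|}.
\]

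I first dispose of the case $d=1$: by symmetry, for any $r$-subset $B$ of $G_{1}=\{e_{1},\dots,e_{s}\}$ one has $|\nabla_{F}(B)| = (q-1)^{s} - (q-1)^{s-r}$, which coincides with the claimed lower bound. I therefore assume $d \geq 2$ from now on. The base case $s=2$ is then forced by $d+r-2<s$ and $d \geq 2$ to be $(d,r)=(2,1)$, and is checked directly. In the inductive step ($s \geq 3$), the sub-case $r=1$ is immediate from $|\nabla_{F}(\{a\})|=(q-2)^{d}(q-1)^{s-d}$.

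Now assume $r \geq 2$. Distinctness of the $a_{i}$ ensures that some coordinate $k^{*}$ satisfies $m_{k^{*}} := |\{i : a_{i}(k^{*})=1\}| \in \{1,\dots,r-1\}$; relabel so $k^{*}=s$ and split $B = B_{0} \sqcup B_{1}$ according to $a_{s} \in \{0,1\}$, so $r_{\epsilon} := |B_{\epsilon}| \geq 1$ and $r_{0}+r_{1}=r$. Let $\tilde{B}_{0} \subseteq G_{d}^{(s-1)}$ and $\tilde{B}_{1} \subseteq G_{d-1}^{(s-1)}$ be the projections onto the first $s-1$ coordinates. Sorting $b \in \nabla_{F}(B)$ by whether $b_{s}=0$ or $b_{s} \in \{1,\dots,q-2\}$ gives
\[
|\nabla_{F}(B)| \;=\; |\nabla_{F^{(s-1)}}(\tilde{B}_{0})| + (q-2)\,|\nabla_{F^{(s-1)}}(\tilde{B}_{0}\cup\tilde{B}_{1})| \;\geq\; |\nabla_{F^{(s-1)}}(\tilde{B}_{0})| + (q-2)\,|\nabla_{F^{(s-1)}}(\tilde{B}_{1})|,
\]
where $F^{(s-1)}$ denotes the $(s-1)$-dimensional analogue of $F$. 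Applying the inductive hypothesis to both terms (the first with parameters $(s-1,d,r_{0})$, valid because $r_{0} \leq r-1$ yields $d+r_{0}-2 < s-1$; the second with $(s-1,d-1,r_{1})$, invoking the already settled $d=1$ case when $d=2$), substituting, and dividing through by $(q-2)^{d-1}$, the desired bound reduces, with $m := q-1$, to
\[
m^{r-1} + 1 \;\geq\; m^{r_{0}} + m^{r_{1}-1},
\]
which factors as $(m^{r_{0}}-1)(m^{r_{1}-1}-1) \geq 0$ and holds because $m \geq 1$ and $r_{0},r_{1} \geq 1$.

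The main obstacle I anticipate is the bookkeeping at the borderline $d=1$ --- where the claimed formula formally contains $(q-2)^{-1}$ if one naively sets $d-1$ in the exponent --- which is circumvented by the direct symmetric computation above. A secondary point is to verify that the splitting coordinate $k^{*}$ always exists when $r \geq 2$ (granted by distinctness of $B$) and that both inductive preconditions $d+r_{0} \leq s$ and $d+r_{1} \leq s+1$ are met simultaneously.
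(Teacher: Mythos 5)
Your argument is correct, but it follows a genuinely different route from the paper. The paper fixes $s$ and inducts on $r$: after arranging $a_{1},\dots,a_{r}$ in graded lexicographic order it writes $\nabla_{F}(B)$ as $\nabla_{F}(\{a_{1},\dots,a_{r-1}\})$ together with the new points of $\nabla_{F}(\{a_{r}\})$, and it produces at least $(q-2)^{d}(q-1)^{s-d-r+1}$ such new points explicitly, by choosing for each $j<r$ a position $p_{j}$ with $a_{j}(p_{j})=1$, $a_{r}(p_{j})=0$, setting those (at most $r-1$) coordinates to zero and the support of $a_{r}$ to nonzero values; the case $r=2$ is handled by a separate inclusion--exclusion computation. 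You instead induct on $s$, splitting along a coordinate where the $a_{i}$ disagree, which yields the exact identity $|\nabla_{F}(B)|=|\nabla_{F^{(s-1)}}(\tilde B_{0})|+(q-2)\,|\nabla_{F^{(s-1)}}(\tilde B_{0}\cup\tilde B_{1})|$ and hence two smaller instances with parameters $(s-1,d,r_{0})$ and $(s-1,d-1,r_{1})$; the numerical step then collapses to $(m^{r_{0}}-1)(m^{r_{1}-1}-1)\geq 0$. Your preconditions check out: $d+r_{0}-2<s-1$ and $(d-1)+r_{1}-2<s-1$ follow from $d+r-2<s$ and $r_{0},r_{1}\leq r-1$, the bounds $r_{0}\leq\binom{s-1}{d}$, $r_{1}\leq\binom{s-1}{d-1}$ are automatic from distinctness of the projections, and the only condition you leave tacit, $d\leq s-1$ for the first application, follows from $d+r-2<s$ with $r\geq 2$ (and for $q=2$, $d\geq 2$ the bound is trivially $0$, so dividing by $(q-2)^{d-1}$ is harmless). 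What your approach buys is uniformity: the exact treatment of $d=1$ replaces the paper's ad hoc $r=2$ base case, no monomial ordering or witness-position bookkeeping is needed, and the recursion parallels the coordinate-slicing arguments of Lemmas 3.2 and 3.3; moreover your method proves the companion Lemma 3.8 (the $G_{\leq d}$ version) with essentially no change. What the paper's approach buys is that it stays in a fixed ambient $F$ and isolates exactly which points are newly covered by each additional monomial, which is the mechanism it reuses verbatim for Lemma 3.8.
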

\begin{proof}
We prove this lemma by induction on $r$. For $r=1$, the lemma is clearly true. For $r=2$, let $B=\{a,b\} \subseteq G_{d}$ where $a=(a_{1},a_{2},\cdots,a_{s})$ and $b=(b_{1},b_{2},\cdots,b_{s})$. Define for any $v=(v_{1},~v_{2},\cdots,v_{s}) \in F$, $$supp~v:=\{1 \leq i \leq s~:~v_{i}\neq 0\}.$$ Let $A:=supp~a \cup supp~b$ and $|A|=:e$. Then 
\begin{align*}
|\nabla_{F}(B)\mid &=\prod_{i=1}^{s}(q-1-a_{i})+\prod_{i=1}^{s}(q-1-b_{i})-\prod_{i=1}^{s} min(q-1-a_{i},q-1-b_{i})\\
&=2(q-2)^d(q-1)^{s-d}-(q-2)^{e}(q-1)^{s-e}.
\end{align*}
To prove the lemma for $r=2$, we have to show that 
$$2(q-2)^d(q-1)^{s-d}-(q-2)^{e}(q-1)^{s-e}\geq q(q-2)^{d}(q-1)^{s-d-1},$$
which is equivalent to proving that 
\begin{equation}
(q-2)^{d+1}(q-1)^{s-d-1}\geq(q-2)^{e}(q-1)^{s-e}.
\end{equation}
Observe that $e \geq d+1$, so equation $(3.4)$ holds.\\

Assume that the lemma is true for $r-1$. We prove it for $r$. For any $B=\{a_{1},\cdots,a_{r}\} \subseteq G_{d}$,
$$\nabla_{F}(B)=\nabla_{F}(\{a_{1},a_{2},\cdots,a_{r}\})=\nabla_{F}(\{a_{1},a_{2},\cdots,a_{r-1}\}) \cup [\nabla (\{ a_{r} \})\setminus \nabla_{F}(\{a_{1},a_{2},\cdots,a_{r-1}\})].$$

Therefore
\begin{equation}
 |\nabla_{F}(B)|=|\nabla_{F}(\{a_{1},a_{2},\cdots,a_{r-1}\})|+ |\nabla (\{ a_{r} \})\setminus \nabla_{F}(\{a_{1},a_{2},\cdots,a_{r-1}\})|.
 \end{equation}

We arrange $a_{1},a_{2},\cdots, a_{r}$ in graded lexicographic order. Without loss of generality assume that $a_{r} \prec \cdots \prec a_{2} \prec a_{1}$.\par

Now, look at $a_{1}$, find a position $p_{1}$ $(1 \leq p_{1} \leq s)$ such that any $b \in F$ with $b_{p_{1}}=0$ doesn't belong to $\nabla_{F}(\{a_{1}\})$ but belongs to $\nabla_{F}(\{a_{r}\})$. (It may or may not belong to $\nabla_{F}(\{a_{2},\cdots,a_{r-1}\})$. Next, look at $a_{2}$ and find a position $p_{2}$ similarly ( Note that $p_{2}$ may be equal to $p_{1}$). Keep on doing this upto $a_{r-1}$. In this way, we get $p_{1}, p_{2}, \cdots, p_{r-1}$. Let $e_{1}, e_{2}, \cdots, e_{w}$ denotes distinct elements from $p_{1}, p_{2}, \cdots, p_{r-1}$. Then $1 \leq w \leq r-1$. Let $1 \leq v_{1},~v_{2},\cdots,v_{d}\leq s$ be the positions where $a_{r}$ is non-zero.\par  Consider $b=(b_{1},~b_{2},\cdots,b_{s}) \in F$ with $b_{e_{i}}=0$ for $1 \leq i \leq w$ and $b_{v_{j}}=1$ for $1\leq j \leq d.$ Then, any such $b$ is contained in $\nabla_{F}(\{a_{r}\})$ but not in $\nabla_{F}(\{a_{1},\cdots,a_{r-1}\})$. The cardinality of set of all such $b$'s is $(q-2)^{d}(q-1)^{s-w-d} \geq (q-2)^{d}(q-1)^{s-d-r+1}$.\\

By induction hypothesis, we obtain from equation $(3.5)$
\begin{align*}
|\nabla_{F}(B)| &\geq (q-2)^{d-1}(q-1)^{s-d-r+2}[(q-1)^{r-1}-1]+(q-2)^{d} (q-1)^{s-d-r+1}\\
&=(q-2)^{d-1}(q-1)^{s-d-r+1}[(q-1)^{r}-1].\\
\end{align*}
\end{proof}

Now, we state a lemma which will be required in our main result. The proof is similar to \cite{hyper}, Lemma $3.2$.
\begin{lemma}
Let $\mathcal{L}$ be a $K$-linear subspace of $S=K[t_{1}, \cdots,t_{s}]$ of finite dimension and let $F'=\{f_{1},~f_{2},\cdots,f_{r}\}$ be a subset of $\mathcal{L} \backslash \{0\}$. If $f_{1},~f_{2},\cdots,f_{r}$ are linearly independent over $K$, then there is a set $G'=\{g_{1},~g_{2},\cdots,g_{r}\} \subset \mathcal{L}  \backslash \{0\}$ such that
\begin{itemize}
\item $KF'=KG'$.
\item $LM(g_{1}),~LM(g_{2}),\cdots, LM(g_{r})$ are distinct.
\item $LM(g_{i}) \preceq LM(f_{i})$ for all $i$.
\item $g_{1},~g_{2}, \cdots, g_{r}$ are linearly independent over $K$.
\item $V_{T}(F')=V_{T}(G')$.
\end{itemize}    
\end{lemma}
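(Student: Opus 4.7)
The plan is to construct $G'$ by induction on $r$, mimicking Gaussian elimination on leading monomials. For $r=1$ I simply set $g_1 := f_1$, and all five conditions hold trivially.

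For the inductive step, I first invoke the hypothesis on the linearly independent subset $\{f_1, \ldots, f_{r-1}\}$ of $\mathcal{L}$ to produce $\{g_1, \ldots, g_{r-1}\} \subset \mathcal{L} \setminus \{0\}$ satisfying the five listed properties. To produce $g_r$, I would initialize $h := f_r$ and perform the following reduction loop: whenever $\mathrm{LM}(h) = \mathrm{LM}(g_j)$ for some $j < r$, replace $h$ by $h - \bigl(\mathrm{LC}(h)/\mathrm{LC}(g_j)\bigr)\, g_j$. Each such step keeps $h$ inside $\mathcal{L}$ (because $\mathcal{L}$ is a $K$-subspace) and preserves the identity $K\{g_1, \ldots, g_{r-1}, h\} = K\{f_1, \ldots, f_r\}$, while strictly decreasing $\mathrm{LM}(h)$. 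Since $\prec$ is a well-ordering on monomials, the loop must terminate after finitely many steps; I then set $g_r$ equal to the final value of $h$.

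The only real subtlety to check is that $h$ never becomes the zero polynomial during the reduction. If at some stage the reducer forced $h = c\, g_j$, then since every intermediate $h$ has the form $f_r - \sum_{k<r} c_k g_k$, this would place $f_r \in K\{g_1, \ldots, g_{r-1}\} = K\{f_1, \ldots, f_{r-1}\}$, contradicting the linear independence of $F'$. Granted this, at termination $g_r$ is a nonzero element of $\mathcal{L}$ whose leading monomial differs from $\mathrm{LM}(g_1), \ldots, \mathrm{LM}(g_{r-1})$, and $\mathrm{LM}(g_r) \preceq \mathrm{LM}(f_r)$ because leading monomials never increased along the way.

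Linear independence of $G'$ then follows immediately from $\dim_K KG' = \dim_K KF' = r$, and $V_T(G') = V_T(F')$ holds because the common zero set in $T$ of a finite generating set of a $K$-subspace of $S$ depends only on the subspace itself, not on the choice of generators, and $KG' = KF'$ by construction. The proof is essentially Gaussian elimination adapted to respect the monomial order, so the only non-bookkeeping obstacle is the nonvanishing check for $h$ described above.
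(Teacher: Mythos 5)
Your proof is correct, and it is the same basic idea as the paper's (induction on $r$ plus Gaussian elimination of coinciding leading monomials), but the induction is organized differently. The paper first relabels so that $LM(f_r)\preceq\cdots\preceq LM(f_1)$, keeps $f_1$ untouched, subtracts the appropriate scalar multiple of $f_1$ from \emph{every} $f_i$ sharing its leading monomial in a single batch, and then feeds the resulting $r-1$ polynomials (all with strictly smaller leading monomial) to the induction hypothesis; because all collisions with the maximal monomial are resolved at once, no termination argument is ever needed. You instead apply the induction hypothesis to $\{f_1,\dots,f_{r-1}\}$ first and then run a division-algorithm-style reduction of $f_r$ against the already-constructed $g_1,\dots,g_{r-1}$; this costs you two extra (but correctly supplied) ingredients: termination of the loop via the well-ordering property of the monomial order, and the check that $h$ never vanishes, which you rightly reduce to the linear independence of $F'$ through the invariant $h=f_r-\sum_{k<r}c_kg_k$. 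A small advantage of your version is that the conditions $LM(g_i)\preceq LM(f_i)$ hold with respect to the original indexing without any relabeling, and the invariants (span preserved, leading monomials non-increasing) are stated very cleanly; the paper's version is shorter because the recursion absorbs all the bookkeeping. Your final deductions of linear independence (from $\dim_K KG'=\dim_K KF'=r$) and of $V_T(F')=V_T(G')$ (zero sets depend only on the span) are exactly the observations the paper uses as well.
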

\begin{proof}
We proceed by induction on $r$. The case $r=1$ is clear. Let $r=2$. For $F'=\{f_{1},f_{2}\} \subset \mathcal{L} \backslash \{0\}$ linearly independent set over $K$. If $LM(f_{1})\neq LM(f_{2})$, then $G':=F'$ works. Otherwise, define\\
 $$g_{1}:=f_{1} \text{ and } g_{2}:=f_{1}-\frac{LC(f_{1})}{LC(f_{2})} f_{2}.$$ Then $G':=\{g_{1},g_{2}\} \subset \mathcal{L}  \backslash \{0\}$ such that $KF'=KG'$, $LM(g_{1}) \neq LM(g_{2})$ and $LM(g_{i}) \preceq LM(f_{i})$ for $i=1,2$. Also, $G'$ is linearly independent set over $K$ and $V_{T}(F')=V_{T}(G')$.\par
Now, assume that $r>2$ and $LM(f_{r}) \preceq \cdots \preceq LM(f_{2}) \preceq LM(f_{1})$. We have the following two cases.
\begin{itemize}
\item If $LM(f_{2}) \prec LM(f_{1})$, then applying induction hypothesis to the set $F''=\{f_{2},~f_{3},\cdots,f_{r}\}$ we obtain a set $G''=\{g_{2},~g_{3},\cdots,g_{r}\} \subset \mathcal{L}  \backslash \{0\}$ such that $KF''=KG''$, $LM(g_{2}),~LM(g_{2})$ $\cdots, LM(g_{r})$ are distinct, $LM(g_{i}) \preceq LM(f_{i})$ for $i=2,~3,\cdots, r$ and $g_{2},~g_{3}, \cdots, g_{r}$ are linearly independent over $K$. Also $V_{T}(F'')=V_{T}(G'')$. Define $g_{1}:=f_{1}$ and $G':=G'' \cup \{g_{1}\}$. This implies $KF'=KG'$. Since $LM(g_{i}) \preceq LM(f_{i}) \prec LM(f_{1})$ for $i=2,3,\cdots, r$, the monomials $LM(g_{i})$, $1 \leq i \leq r$, are distinct. Also, $G'$ is linearly independent over $K$ and
 $$V_{T}(F')=V_{T}(f_{1}) \cap V_{T}(F'')=V_{T}(g_{1}) \cap V_{T}(G'')=V_{T}(G').$$

\item If $LM(f_{2})=LM(f_{1})$, assume that there exists $l\geq 2$ such that $LM(f_{1})=LM(f_{i})$ for $i \leq l$ and $LM(f_{i}) \prec LM(f_{1})$ for $i>l$. Define $$h_{i}=f_{1}-\frac{LC(f_{1})}{LC(f_{i})} f_{i} \text{ for } i=2,~3, \cdots, l \text{ and } h_{i}=f_{i} \text{ for } i>l.$$ Then $LM(h_{i}) \prec LM(f_{1})$ for $i \geq 2$ and $H=\{h_{2},~h_{3},\cdots,h_{r}\} \subset \mathcal{L} \backslash \{0\}$ is a linearly independent set. By induction hypothesis for $H$, we obtain a set $G''=\{g_{2},~g_{3},\cdots, g_{r}\} \subset \mathcal{L}\backslash \{0\}$ such that $KH=KG''$, $LM(g_{i}), i=2,~3,\cdots,r$ are distinct and $LM(g_{i}) \preceq LM(h_{i})$ for $i=2,~3,\cdots,r $. Also,  $G''$ is linearly independent set over $K$ and $V_{T}(H)=V_{T}(G'')$. Define $g_{1}:=f_{1}$ and $G':=G'' \cup \{g_{1}\}$. We obtain $G' \subset \mathcal{L}  \backslash \{0\}$ such that $LM(g_{1}),~LM(g_{2}), \cdots, LM(g_{r})$ are distinct. As $LM(g_{i}) \preceq LM(h_{i}) \prec LM(f_{1})$ for $i=2,~3, \cdots,r$, we have $LM(g_{i}) \preceq LM(f_{i})$ for $i=1,~2,\cdots,r$. Also, $G'$ is linearly independent set. Thus, as $$V_{T}(F')=V_{T}(f_{1}) \cap V_{T}(H)=V_{T}(g_{1}) \cap V_{T}(G'')=V_{T}(G').$$
\end{itemize}
\end{proof}

Thus, we get our main result.
\begin{theorem}
Let $s\geq 2$, $1 \leq d \leq s$ and $1 \leq r \leq {s \choose d}$. If $d+r-2<s$, then for any $f_{1},f_{2},\cdots,f_{r} \in KV_{\leq d}$  of linearly independent polynomials  over $K$ of degree $d$, we have 
$$|V_{T}(f_{1},f_{2},\cdots,f_{r})| \leq (q-1)^{s}-(q-2)^{d-1}(q-1)^{s-d-r+1}[(q-1)^{r}-1].$$
\end{theorem}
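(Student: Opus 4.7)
The plan is to combine Lemma~3.5, the monomial-ideal setup that produced equation~(3.3), and the combinatorial lower bound of Lemma~3.4. First, I apply Lemma~3.5 to $F'=\{f_1,\dots,f_r\}$ inside the $K$-subspace $\mathcal{L}=KV_{\leq d}$ to obtain $g_1,\dots,g_r\in KV_{\leq d}$ spanning the same $K$-subspace, with pairwise distinct leading monomials $t^{a_1},\dots,t^{a_r}$, and with $V_T(g_1,\dots,g_r)=V_T(f_1,\dots,f_r)$. Each $g_i$ is a nonzero square-free polynomial of degree at most $d$, so each exponent vector $a_i$ is a $0/1$ vector with $\deg(a_i)\leq d$, i.e.\ $a_i\in G_{\leq d}$; the $a_i$'s are pairwise distinct.

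Second, I repeat verbatim the monomial-ideal argument preceding~(3.3) for $g_1,\dots,g_r$. With $\mathcal{A}=\langle t_1^{q-1},\dots,t_s^{q-1},t^{a_1},\dots,t^{a_r}\rangle$ and $J=\langle LT(I(V_T(g_1,\dots,g_r)))\rangle$, one has $\mathcal{A}\subseteq J$, and Propositions~2.7 and~2.8 give
\[
|V_T(f_1,\dots,f_r)|={}^{a}\mathrm{HF}_{J}(u)\;\leq\;{}^{a}\mathrm{HF}_{\mathcal{A}}(u)\;=\;|F\setminus\nabla_F(\{a_1,\dots,a_r\})|
\]
for all sufficiently large $u$. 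The theorem is therefore reduced to showing
\[
|\nabla_F(\{a_1,\dots,a_r\})|\;\geq\;(q-2)^{d-1}(q-1)^{s-d-r+1}\bigl[(q-1)^{r}-1\bigr].
\]

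Third, I reduce to the case $\{a_1,\dots,a_r\}\subseteq G_d$ so that Lemma~3.4 can be invoked. If already every $a_i\in G_d$, Lemma~3.4 applies directly under the hypothesis $d+r-2<s$ and finishes the proof. Otherwise, for each $a_i$ with $\deg(a_i)=d'<d$ I choose $a_i'\in G_d$ whose support contains $\mathrm{supp}(a_i)$; then $a_i\leq_{P}a_i'$, so $\nabla_F(\{a_i'\})\subseteq\nabla_F(\{a_i\})$. Provided the resulting $a_1',\dots,a_r'$ are pairwise distinct, one has $\nabla_F(\{a_1',\dots,a_r'\})\subseteq\nabla_F(\{a_1,\dots,a_r\})$ and Lemma~3.4 applied to $\{a_1',\dots,a_r'\}\subseteq G_d$ delivers the required lower bound for $|\nabla_F(\{a_1,\dots,a_r\})|$.

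The main obstacle is precisely this last combinatorial step: arranging that the extensions $a_i'$ remain pairwise distinct, which is where the numerical hypothesis $d+r-2<s$ is used. A greedy argument processing the $a_i$'s in order of increasing degree should suffice, since each $a_i$ of degree $d'<d$ admits $\binom{s-d'}{d-d'}$ extensions in $G_d$ and the bound $d+r-2<s$ keeps this comfortably larger than the at most $r-1$ previously chosen extensions; equivalently, Hall's marriage condition for the bipartite incidence between the $a_i$'s and their degree-$d$ supersets is readily verified under the hypothesis. Once such a distinct system of extensions is produced, the chain of inequalities closes and the theorem follows.
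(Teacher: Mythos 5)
Your argument is correct, and its skeleton coincides with the paper's: reduce to distinct leading monomials via the linear-algebra lemma (this is Lemma 3.6 in the paper; your citations are shifted by one, since 3.4 is a definition and the shadow bound is Lemma 3.5), then pass through the footprint/Hilbert-function bound of equation (3.3) and the shadow estimate. The only genuine divergence is the case where, after the reduction, some $g_i$ has degree $d'<d$, so its leading exponent lies in $G_{\leq d}\setminus G_{d}$. The paper dispatches this case cheaply: it discards all the other polynomials and applies the single-polynomial bound of Lemma 3.2 to that $g_i$, namely $|V_T(g_i)|\leq (q-1)^{s}-(q-2)^{d'}(q-1)^{s-d'}$, which already implies the claimed bound because $(q-2)^{d'}(q-1)^{s-d'}\geq (q-2)^{d-1}(q-1)^{s-d+1}\geq (q-2)^{d-1}(q-1)^{s-d-r+1}\bigl[(q-1)^{r}-1\bigr]$. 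You instead stay inside the combinatorial framework and extend each low-degree exponent vector to a degree-$d$ vector while keeping the system distinct; this is where $d+r-2<s$ enters for you, and your Hall condition does hold: an index of degree $d'<d$ has $\binom{s-d'}{d-d'}\geq s-d+1\geq r$ admissible extensions, while degree-$d$ indices are forced to themselves, so every subset of indices satisfies Hall's condition. One small caution: if you run the greedy in increasing order of degree you must avoid not only previously chosen extensions but also the forced images of the degree-$d$ vectors still to come; avoiding all of the at most $r-1$ other images works since each low-degree vector has at least $r$ choices. Your route is slightly longer here, but it has the virtue that the same extension argument essentially yields the $G_{\leq d}$ shadow bound (Lemma 3.8), and hence Theorem 3.9, without repeating the induction, whereas the paper's shortcut via Lemma 3.2 is specific to the present theorem.
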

\begin{proof}
Suppose leading monomials of $f_{1}, f_{2}, \cdots, f_{r}$ are distinct, then the inequality follows from equation $(3.3)$ and Lemma $3.5$. If the leading monomials of $f_{1}, f_{2}, \cdots, f_{r}$  are not all distinct, then by Lemma $3.6$, we get $g_{1},~g_{2},\cdots,g_{r}$ with distinct leading monomials. If $deg~g_{i}=d$ for all $i$. Then the theorem follows from equation $(3.3)$ and Lemma $3.5$. If atleast one $g_{i}$ has degree less than $d$, without loss of generality assume that $deg~g_{r}=d'<d$. Then, 
\begin{align*}
|V_{T}(f_{1},f_{2},\cdots,f_{r})|&=|V_{T}(g_{1},g_{2},\cdots,g_{r})|\\
&\leq |V_{T}(g_{r})|\\
&\leq (q-1)^{s}-(q-2)^{d'}(q-1)^{s-d'}\\
&\leq (q-1)^{s}-(q-2)^{d-1}(q-1)^{s-d-r+1}[(q-1)^{r}-1].
\end{align*}
\end{proof}

Now,  Let $f_{1},f_{2},\cdots,f_{r} \in KV_{\leq d}$ be linearly independent. Since the polynomials are linearly independent, we can assume that their leading monomials are distinct. Following the procedure as before, we get 
\begin{equation}
|V_{T}(f_{1},\cdots f_{r})|\leq max~\{~| F \backslash \nabla_{F}(a_{1},~a_{2},\cdots,a_{r})|~ :~ a_{1},~a_{2}, \cdots, a_{r} \in G_{\leq d}\}.\\
\end{equation}

Repeating the procedure of Lemma $3.5$, we have the following lemma.
\begin{lemma}
Let $s\geq 2$, $1 \leq d \leq s$ and $1\leq r \leq dim~KV_{\leq d}$. If $d+r-2<s$, then for any $B=\{a_{1},~a_{2},\cdots,a_{r}\}\subseteq G_{\leq d}$ with $|B|=r$, we have 
$$|\nabla_{F}(B)|~ \geq (q-2)^{d-1} (q-1)^{s-d-r+1}[(q-1)^{r}-1].$$
\end{lemma}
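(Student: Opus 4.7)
The plan is to mimic the proof of Lemma $3.5$ essentially verbatim, proceeding by induction on $r$, but now allowing each $a_i \in G_{\leq d}$ to have some degree $d_i \leq d$ rather than exactly $d$. Any degree deficit $d - d_i$ will be absorbed throughout by the elementary inequality $(q-1)^{k} \geq (q-2)^{k}$, valid for $q \geq 3$ and $k \geq 0$.

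For the base case $r = 1$, if $\deg a = d' \leq d$, then $|\nabla_F(\{a\})| = (q-2)^{d'}(q-1)^{s-d'} \geq (q-2)^{d}(q-1)^{s-d}$, matching the $r = 1$ target. For the inductive step, I would arrange the $a_i$'s so that $a_r \prec \cdots \prec a_1$ in graded lex, apply the inductive hypothesis to $\{a_1, \ldots, a_{r-1}\}$ (whose hypothesis $d + (r-1) - 2 < s$ is implied by ours), and then show that the difference set $\nabla_F(\{a_r\}) \setminus \nabla_F(\{a_1, \ldots, a_{r-1}\})$ contains at least $(q-2)^{d}(q-1)^{s-d-r+1}$ elements, since the required gap in the bound for $r$ versus $r-1$ is exactly this quantity. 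Following Lemma $3.5$, for each $i < r$ I would pick a position $p_{i} \in \mathrm{supp}(a_i) \setminus \mathrm{supp}(a_r)$; letting $\{e_1, \ldots, e_w\}$ be the distinct values among the $p_i$ (so $1 \leq w \leq r - 1$) and $v_1, \ldots, v_{d_r}$ the support of $a_r$, any $b \in F$ with $b_{e_i} = 0$ for all $i$ and $b_{v_j} \geq 1$ for all $j$ lies in the difference set. There are $(q-2)^{d_r}(q-1)^{s-w-d_r}$ such vectors, and combining $w \leq r-1$, $d_r \leq d$ with the above inequality yields $(q-2)^{d_r}(q-1)^{s-w-d_r} \geq (q-2)^{d}(q-1)^{s-d-r+1}$, closing the induction.

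The main obstacle, and the only point where the argument genuinely differs from Lemma $3.5$, is justifying the existence of $p_i \in \mathrm{supp}(a_i) \setminus \mathrm{supp}(a_r)$, i.e.\ that $\mathrm{supp}(a_i) \not\subseteq \mathrm{supp}(a_r)$ for every $i < r$. In Lemma $3.5$ all $a_i$ had the same degree $d$, so distinctness alone ruled out containment; here the degrees may differ, and one must exploit the graded lex ordering. Specifically, $a_r \prec a_i$ forces $\deg a_r \leq \deg a_i$, so if in addition $\mathrm{supp}(a_i) \subseteq \mathrm{supp}(a_r)$, the $0/1$ nature of these vectors would give $\deg a_i \leq \deg a_r$, hence $a_i = a_r$, a contradiction. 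This is precisely why placing $a_r$ as the graded-lex minimum is essential; once this support-noncontainment is in hand, the rest of the inductive step is a straightforward transcription of the argument in Lemma $3.5$ with $d$ replaced by $d_r$ where appropriate.
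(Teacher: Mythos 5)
Your proposal is correct and takes essentially the same route as the paper, which likewise proves this lemma by repeating the shadow-counting induction of Lemma $3.5$ with $\deg a_r = l \leq d$ (treating $a_r=0$ trivially) and absorbing the degree deficit via $(q-1)^{k} \geq (q-2)^{k}$. The only differences are cosmetic: you explicitly justify the existence of the positions $p_i$ (support non-containment via graded-lex minimality of $a_r$), a point the paper leaves implicit, and your uniform induction starting at $r=1$ renders the paper's separate $r=2$ case analysis unnecessary.
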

\begin{proof}
We prove this lemma by induction on $r$. For $r=1$, the inequality holds. For $r=2$, let $B=\{a,b\} \subseteq G_{\leq d}$ where $a=(a_{1},a_{2},\cdots,a_{s})$ and $b=(b_{1},b_{2},\cdots,b_{s})$. Let $A_{1}:=supp~a$, $A_{2}:=supp~b$ and let $|A_{1}|=:e_{1}$, $|A_{2}|=:e_{2}$. Without loss of generality, suppose that $e_{1} \leq e_{2} \leq d$. Then 
\begin{align*}
|\nabla_{F}(B)\mid &=\prod_{i=1}^{s}(q-1-a_{i})+\prod_{i=1}^{s}(q-1-b_{i})-\prod_{i=1}^{s} min(q-1-a_{i},q-1-b_{i})\\
&=(q-2)^{e_{1}}(q-1)^{s- e_{1}}+(q-2)^{e_{2}}(q-1)^{s- e_{2}}-(q-2)^{|A_{1} \cup A_{2}|}(q-1)^{s-|A_{1} \cup A_{2}|}.
\end{align*}
To prove the lemma for $r=2$, we consider the following two cases as in \cite{hyper}.
\begin{itemize}
\item If $e_{1}=d$, then we have to show 
$$(q-2)^{d+1}(q-1)^{s-d-1} \geq (q-2)^{|A_{1} \cup A_{2}|}(q-1)^{s-|A_{1} \cup A_{2}|},$$
which we have already proved in previous lemma.

\item If $e_{1}<d$, then we the lemma holds true using the following inequalities $$(q-2)^{e_{1}}(q-1)^{s- e_{1}} \geq q(q-2)^{d}(q-1)^{s-d-1}$$ and \hspace{ 1 cm} $(q-2)^{e_{2}}(q-1)^{s- e_{2}} \geq (q-2)^{|A_{1} \cup A_{2}|}(q-1)^{s-|A_{1} \cup A_{2}|}$.
\end{itemize}

Assume that the lemma is true for $r-1$. We prove for $r$. For any $B=\{a_{1},~a_{2},\cdots,a_{r}\} \subseteq G_{\leq d}$,
$$\nabla_{F}(B)=\nabla_{F}(\{a_{1},a_{2},\cdots,a_{r}\})=\nabla_{F}(\{a_{1},a_{2},\cdots,a_{r-1}\}) \cup [\nabla (\{ a_{r} \})\setminus \nabla_{F}(\{a_{1},a_{2},\cdots,a_{r-1}\})].$$

Therefore,
\begin{equation}
 |\nabla_{F}(B)|=|\nabla_{F}(\{a_{1},a_{2},\cdots,a_{r-1}\})|+ |\nabla (\{ a_{r} \})\setminus \nabla_{F}(\{a_{1},a_{2},\cdots,a_{r-1}\})|.
 \end{equation}

We arrange $a_{1},a_{2},\cdots, a_{r}$ in graded lexicographic order. Without loss of generality, assume that $a_{r} \prec \cdots \prec a_{2} \prec a_{1}$.\par

Now, look at $a_{1}$, find a position $p_{1}$ $(1 \leq p_{1} \leq s)$ such that any $b \in F$ with $b_{p_{1}}=0$ doesn't belong to $\nabla_{F}(\{a_{1}\})$ but belongs to $\nabla_{F}(\{a_{r}\})$. (It may or may not belong to $\nabla_{F}(\{a_{2},\cdots,a_{r-1}\})$. Next, look at $a_{2}$ and find a position $p_{2}$ similarly ( Note that $p_{2}$ may be equal to $p_{1}$). Keep on doing this upto $a_{r-1}$. In this way, we get $p_{1}, p_{2}, \cdots, p_{r-1}$. Let $e_{1}, e_{2}, \cdots, e_{w}$ denotes distinct elements from $p_{1}, p_{2}, \cdots, p_{r-1}$. Then $1 \leq w \leq r-1$. Let $1 \leq v_{1},~v_{2},\cdots,v_{l}\leq s$ be the positions where $a_{r}$ is non-zero. Note that $l \leq d$. If $l=0$, then $a_{r}=(0,\cdots,0)$ and $|\nabla_{F}(B)|=(q-1)^{s} \geq (q-2)^{d-1}(q-1)^{s-d-r+1}[(q-1)^{r}-1]$. So, we assume that $l \geq 1$. \par  Consider $b=(b_{1},~b_{2},\cdots,b_{s}) \in F$ with $b_{e_{i}}=0$ for $1 \leq i \leq w$ and $b_{v_{j}}=1$ for $1\leq j \leq l.$ Then any such $b$ is contained in $\nabla_{F}(\{a_{r}\})$ but not in $\nabla_{F}(\{a_{1},\cdots,a_{r-1}\})$. The cardinality of set of all such $b$'s is $(q-2)^{l}(q-1)^{s-w-l} \geq (q-2)^{d}(q-1)^{s-d-r+1}$.\\

By induction hypothesis, we obtain from equation $(3.7)$
\begin{align*}
|\nabla_{F}(B)| &\geq (q-2)^{d-1}(q-1)^{s-d-r+2}[(q-1)^{r-1}-1]+(q-2)^{d} (q-1)^{s-d-r+1}\\
&=(q-2)^{d-1}(q-1)^{s-d-r+1}[(q-1)^{r}-1].
\end{align*}
\end{proof}

Thus, we get
\begin{theorem}
For $1 \leq r \leq dim~KV_{\leq d}$, if $f_{1},\cdots,f_{r} \in KV_{\leq d}$ are linearly independent and $d+r-2<s$, then $$| V_{T}(f_{1},\cdots,f_{r})| \leq (q-1)^{s}-(q-2)^{d-1}(q-1)^{s-d-r+1}[(q-1)^{r}-1].$$
\end{theorem}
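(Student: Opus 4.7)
The plan is to follow the same three-step strategy used to prove Theorem 3.7, now substituting Lemma 3.8 for Lemma 3.5 at the combinatorial step. First I would reduce to the case of distinct leading monomials: starting with linearly independent $f_1,\ldots,f_r \in KV_{\leq d}$, apply Lemma 3.6 with $\mathcal{L}=KV_{\leq d}$ to produce $g_1,\ldots,g_r \in KV_{\leq d}\setminus\{0\}$ that are linearly independent, have pairwise distinct leading monomials $LM(g_i)=t^{a_i}$, and satisfy $V_T(f_1,\ldots,f_r)=V_T(g_1,\ldots,g_r)$. Since each $g_i$ is a $K$-linear combination of square-free monomials of degree at most $d$, its leading monomial under $\prec$ is also square-free of degree at most $d$, hence $a_i \in G_{\leq d}$ for every $i$.

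Next, I would import the affine Hilbert function argument that was carried out just before Lemma 3.5 and which is explicitly recorded in the paper as inequality $(3.6)$. With $I=I(T)$, the set $\{t_j^{q-1}-1\}_{j=1}^s$ is a Gr\"obner basis of $I$, so $\langle LT(I)\rangle = \langle t_1^{q-1},\ldots,t_s^{q-1}\rangle$. Letting $\mathcal{A}=\langle t_1^{q-1},\ldots,t_s^{q-1},t^{a_1},\ldots,t^{a_r}\rangle$, Proposition 2.7 and Proposition 2.8 give, for all sufficiently large $u$,
\[
|V_T(g_1,\ldots,g_r)| \;\leq\; {}^a\mathrm{HF}_{\mathcal{A}}(u) \;=\; |F\setminus \nabla_F(\{a_1,\ldots,a_r\})|,
\]
which is exactly $(3.6)$. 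Taking the maximum over admissible tuples $a_1,\ldots,a_r\in G_{\leq d}$ is therefore an upper bound for $|V_T(f_1,\ldots,f_r)|$.

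Finally, I would apply Lemma 3.8 directly: under the hypothesis $d+r-2<s$, for any subset $B=\{a_1,\ldots,a_r\}\subseteq G_{\leq d}$ of cardinality $r$,
\[
|\nabla_F(B)| \;\geq\; (q-2)^{d-1}(q-1)^{s-d-r+1}\bigl[(q-1)^r-1\bigr].
\]
Since $|F|=(q-1)^s$, combining this with the Hilbert function bound yields the desired
\[
|V_T(f_1,\ldots,f_r)| \;\leq\; (q-1)^s - (q-2)^{d-1}(q-1)^{s-d-r+1}\bigl[(q-1)^r-1\bigr].
\]

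The main conceptual work has already been done in Lemma 3.6 (straightening the $f_i$ to have distinct leading monomials while preserving the common zero set in $T$) and in Lemma 3.8 (the combinatorial shadow estimate), so the proof of the theorem itself is just the assembly of these pieces. The only small subtlety to verify is that replacing $G_d$ by $G_{\leq d}$ causes no trouble: the leading monomial of any element of $KV_{\leq d}$ lies in $G_{\leq d}$, and the Gr\"obner/affine Hilbert function argument producing $(3.6)$ does not use degree homogeneity of the $f_i$, only that their leading exponent vectors are coordinate-wise $\leq q-2$, which is automatic here since $a_i \in \{0,1\}^s$.
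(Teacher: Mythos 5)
Your proposal is correct and follows essentially the same route as the paper: the paper obtains Theorem 3.9 directly from inequality $(3.6)$ (the affine Hilbert function/Gr\"obner basis bound, after reducing to distinct leading monomials, which is where Lemma 3.6 enters) together with the shadow estimate of Lemma 3.8. Your added remark that the straightened polynomials stay in $KV_{\leq d}$ so their leading exponent vectors lie in $G_{\leq d}$ (making the degree-drop case analysis of Theorem 3.7 unnecessary here) is exactly the point the paper leaves implicit.
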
 
\section{Generalized Hamming weights of certain evaluation codes}
Let $s$ and $d$ be integers such that $s\geq 2$ and $1 \leq d \leq s$. In this section, we determine the generalized Hamming weights of toric codes $C_{d}$ and $C^{\mathbb{P}}_{d}$ over hypersimplices, as defined in section $2.1$. We also determine the generalized Hamming weights of square-free affine evaluation code $C_{\leq d}$, as defined in section $2.2$.

\subsection{Generalized Hamming weights of $C_{d}$ and $C^{\mathbb{P}}_{d}$}
For $1 \leq r \leq dim_{K} KV_{d}$, the $r$-th generalized Hamming weight $d_{r}(C^{\mathbb{P}}_{d})$ of $C^{\mathbb{P}}_{d}$ is given by\\
$$d_{r}(C^{\mathbb{P}}_{d}):=min\{~|\mathbb{T}|\backslash |V_{\mathbb{T}}(H)|~:~H:=\{f_{1},~f_{2},\cdots, f_{r}\}  \subseteq KV_{d} \text{ is linearly independent over } K \}.$$
Similarly, we define $d_{r}(C_{d})$. (It follows from \cite{hyper} and \cite{genera}).

In this subsection, we find formulae for generalized Hamming weights of codes $C_{d}$ and $C^{\mathbb{P}}_{d}$ under certain cases. 

\begin{theorem}
Let $1 \leq r \leq {s \choose d}$. For $2d+r-2<s$, we have 
$$d_{r}(C^{\mathbb{P}}_{d})=(q-2)^{d-1}(q-1)^{s-d-r}[(q-1)^{r}-1].$$
\end{theorem}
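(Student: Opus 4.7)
The plan is to show $d_r(C^{\mathbb{P}}_d) = (q-1)^{s-1} - \max_H |V_{\mathbb{T}}(H)|$, where $H$ ranges over $r$-dimensional linearly independent subsets of $KV_d$, and then to establish matching upper and lower bounds on this maximum.

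For the upper bound on $\max_H|V_{\mathbb{T}}(H)|$ (equivalently, the lower bound on $d_r$), I would dehomogenize at $t_1$. Write $f_i = t_1 h_i + g_i$, where $h_i, g_i \in K[t_2,\ldots,t_s]$ are homogeneous of respective degrees $d-1$ and $d$, and set $\tilde f_i := f_i(1,t_2,\ldots,t_s) = h_i + g_i$. Then $\tilde f_i$ is square-free of degree at most $d$ in the $s-1$ variables $t_2,\ldots,t_s$, and linear independence transfers: $\sum c_i\tilde f_i = 0$ forces $\sum c_i h_i = 0$ and $\sum c_i g_i = 0$ by degree separation, hence $\sum c_i f_i = 0$. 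Since $\mathbb{T} = \{1\}\times(\mathbb{F}_q^*)^{s-1}$, evaluation identifies $V_\mathbb{T}(f_1,\ldots,f_r)$ with $V_{T'}(\tilde f_1,\ldots,\tilde f_r)$ where $T' = (\mathbb{F}_q^*)^{s-1}$. The hypothesis $2d+r-2<s$ rewrites as $d+r-2 < s-1$, which is exactly the hypothesis of Theorem 3.10 applied in $s-1$ variables, yielding $|V_\mathbb{T}(H)| \leq (q-1)^{s-1} - (q-2)^{d-1}(q-1)^{s-d-r}[(q-1)^r-1]$.

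For the matching lower bound, I exhibit linearly independent $f_1,\ldots,f_r \in KV_d$ attaining equality:
$$f_i := (t_1 - t_{2d-1+i})\prod_{k=1}^{d-1}(t_{2k}-t_{2k+1}),\qquad i = 1,\ldots,r.$$
The $d$ linear factors use the pairwise disjoint index pairs $\{1,2d-1+i\},\{2,3\},\{4,5\},\ldots,\{2d-2,2d-1\}$, all lying inside $\{1,\ldots,s\}$ because $2d+r-2<s$ forces $2d-1+r\leq s$; this disjointness rules out $t_j^2$ in the expansion, putting $f_i \in KV_d$. Linear independence follows by factoring out the nonzero polynomial $\prod_k(t_{2k}-t_{2k+1})$ and noting that $\sum c_i(t_1 - t_{2d-1+i}) = 0$ forces all $c_i = 0$ (the $t_{2d-1+i}$ are distinct variables, distinct from $t_1$). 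On $\mathbb{T}$ (so $t_1 = 1$), put $A := \bigcup_{k=1}^{d-1}\{t_{2k} = t_{2k+1}\}$ and $B_i := \{t_{2d-1+i} = 1\}$; then $V_\mathbb{T}(f_i) = A \cup B_i$, hence $V_\mathbb{T}(f_1,\ldots,f_r) = A \cup \bigcap_i B_i$. Since $A$ and the $B_i$ constrain disjoint coordinates, $|A^c| = (q-2)^{d-1}(q-1)^{s-d}$ by pair-independence, and a direct inclusion-exclusion collapses $|A \cup \bigcap_i B_i|$ to precisely $(q-1)^{s-1} - (q-2)^{d-1}(q-1)^{s-d-r}[(q-1)^r-1]$.

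The main technical obstacle is the construction step. The naive recipe $\tilde f_i = \prod_{j=1}^{d-1}(x_j-1)\cdot(x_{d-1+i}-1)$, which saturates the Theorem 3.10 bound in $s-1$ variables, does not arise from dehomogenizing any element of $KV_d$: its homogenization $\prod_{j=1}^{d-1}(t_{j+1}-t_1)\cdot(t_{d+i}-t_1)$ contains $t_1^k$ for $k\geq 2$ and is therefore not square-free. Replacing the scalar ``$-1$''s by honest differences $(t_{2k}-t_{2k+1})$ over \emph{disjoint} variable pairs resolves this obstruction while preserving the combinatorial structure of the zero set, and this trade-off is what the hypothesis $2d+r-2<s$ makes room for.
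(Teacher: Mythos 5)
Your proposal is correct, and both halves do the job, but your route differs from the paper's in two ways worth noting. For the lower bound on $d_{r}(C^{\mathbb{P}}_{d})$, the paper exploits homogeneity directly: since the $f_i$ are homogeneous, $(q-1)\,|V_{\mathbb{T}}(f_1,\dots,f_r)| = |V_{T}(f_1,\dots,f_r)|$, and Theorem 3.7 (the degree-exactly-$d$ bound in $s$ variables, applicable since $d+r-2<2d+r-2<s$) finishes it. You instead dehomogenize at $t_1$ and invoke the degree-at-most-$d$ bound in $s-1$ variables (this is Theorem 3.9, not 3.10 as you cite); your degree-separation argument for linear independence of the $\tilde f_i$ is valid, and the needed hypothesis $d+r-2<s-1$ does follow from $2d+r-2<s$ (via $d\geq 1$ -- it is an implication, not the ``rewrite'' you claim, but that is harmless). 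For the upper bound, the paper's extremal family is $g_i=(t_1-t_2)\cdots(t_{2d-3}-t_{2d-2})(t_{2d+i-2}-t_{2d+i-1})$, whose varying factors form a chain of consecutive differences, and the count is done in $T$ (evaluating $|V_{T_1}(g)|$ by an alternating sum) and then divided by $q-1$; your family puts $t_1$ in every varying factor so that on $\mathbb{T}$ the conditions collapse to $t_{2d-1+i}=1$, and your inclusion--exclusion over constraints on disjoint coordinates is cleaner and verifiably yields $(q-1)^{s-1}-(q-2)^{d-1}(q-1)^{s-d-r}[(q-1)^{r}-1]$. Both constructions fit inside $s$ variables precisely because $2d+r-1\leq s$, and both attain the bound, so your proof is complete; the trade-off is that the paper's argument reuses the homogeneous machinery uniformly (and recycles the same computation in Theorem 4.4 via the map $f\mapsto f^{*}$), while yours leans on the $KV_{\leq d}$ result and a slightly slicker extremal count.
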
 
\begin{proof} For any $f_{1},~f_{2},\cdots, f_{r} \in KV_{d}$ linearly independent over $K$, we have
$$(q-1)|V_{\mathbb{T}}(f_{1},~f_{2},\cdots, f_{r})|=|V_{T}(f_{1},~f_{2},\cdots, f_{r})|.$$
So, from Theorem $3.7$ as $d+r-2<2d+r-2<s$, we get
$$|V_{\mathbb{T}}(f_{1},~f_{2},\cdots, f_{r})|\leq (q-1)^{s-1}-(q-2)^{d-1}(q-1)^{s-d-r}[(q-1)^{r}-1].$$ Thus,
$$d_{r}(C^{\mathbb{P}}_{d}))\geq (q-2)^{d-1}(q-1)^{s-d-r}[(q-1)^{r}-1].$$

For the converse, consider the polynomials $g_{1},~g_{2},\cdots,g_{r}$ where $$g_{i}:=(t_{1}-t_{2})(t_{3}-t_{4})\cdots (t_{2d-3}-t_{2d-2})(t_{2d+i-2}-t_{2d+i-1}), \text{ for } 1 \leq i \leq r.$$\\
Then $g_{1},~g_{2},\cdots, g_{r} \in KV_{d}$ and are linearly independent over $K$. Let $g:=(t_{1}-t_{2})(t_{3}-t_{4})\cdots (t_{2d-3}-t_{2d-2})$ and $h_{i}:=(t_{2d+i-2}-t_{2d+i-1})$ for $1 \leq i \leq r$. Let $T_{1}=(\mathbb{F}_{q}^{*})^{2d-2}$. Then 
\begin{align*}
|V_{T}(g_{1},g_{2}, \cdots,g_{r})| &=|V_{T}(g_{1})\cap V_{T}(g_{2})\cap \cdots \cap V_{T}(g_{r})|\\
&=|V_{T}(g) \cup (V_{T}(h_{1})\cap V_{T}( h_{2})\cap \cdots \cap V_{T}( h_{r}))|\\
&=| V_{T}(g) \mid + \mid V_{T}(h_{1})\cap V_{T}(h_{2})\cap \cdots \cap V_{T}(h_{r})|\\
&\hspace{5 cm}-|V_{T}(g) \cap V_{T}(h_{1})\cap V_{T}(h_{2})\cap \cdots \cap V_{T}(h_{r})|\\
&=(q-1)^{s-2d+2} |V_{T_{1}}(g)| +(q-1)^{s-r}-(q-1)^{s-2d-r+2} |V_{T_{1}}(g)|\\
&=(q-1)^{s-r}+(q-1)^{s-2d-r+2}[(q-1)^{r}-1] |V_{T_{1}}(g)|\\
&=(q-1)^{s-r}+(q-1)^{s-2d-r+2}[(q-1)^{r}-1]\left[ \sum_{i=1}^{d-1} (-1)^{i-1} { d-1 \choose i} (q-1)^{2d-2-i}\right]\\
&=(q-1)^{s}-(q-1)^{s-d-r+1}(q-2)^{d-1}[(q-1)^{r}-1]\\
&=(q-1)|V_{\mathbb{T}}(g_{1},g_{2}, \cdots,g_{r})|.\\
\end{align*}
Thus, $$d_{r}(C^{\mathbb{P}}_{d})\leq (q-2)^{d-1}(q-1)^{s-d-r}[(q-1)^{r}-1].$$
Hence, the result follows.
\end{proof}

\begin{corollary}
Let $1 \leq r \leq {s \choose d}$. For $2d+r-2<s$, we have 
$$d_{r}(C_{d})=(q-2)^{d-1}(q-1)^{s-d-r+1}[(q-1)^{r}-1],$$
\end{corollary}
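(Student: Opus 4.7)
The plan is to derive Corollary~4.2 directly from Theorem~4.1 by exploiting the natural $(q-1)$-to-$1$ covering $T \twoheadrightarrow \mathbb{T}$. The first step is to recast the $r$-th generalized Hamming weight in polynomial form: assuming $q \geq 3$, Theorem~2.1 guarantees that the evaluation map $KV_d \to C_d$ is an isomorphism, so $r$-dimensional subcodes of $C_d$ are in bijection with linearly independent $r$-tuples in $KV_d$, and
\[d_r(C_d) = |T| - \max\bigl\{\,|V_T(f_1,\ldots,f_r)| : f_1,\ldots,f_r \in KV_d \text{ linearly independent over } K\bigr\}.\]
The analogous identity holds for $d_r(C^{\mathbb{P}}_d)$ with $\mathbb{T}$ and $V_{\mathbb{T}}$ in place of $T$ and $V_T$.

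The key observation is homogeneity: every $f \in KV_d$ is homogeneous of degree $d$, so the common zero locus $V_T(f_1,\ldots,f_r) \subseteq T$ is stable under the diagonal scaling action $\lambda \cdot (a_1,\ldots,a_s) = (\lambda a_1,\ldots,\lambda a_s)$ of $\mathbb{F}_q^*$. The quotient map $\phi\colon T \to \mathbb{T}$ from Section~2 collapses each $\mathbb{F}_q^*$-orbit to a single projective point and is globally $(q-1)$-to-$1$, so its restriction to $V_T(f_1,\ldots,f_r)$ is a $(q-1)$-to-$1$ surjection onto $V_{\mathbb{T}}(f_1,\ldots,f_r)$. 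Consequently,
\[|T| - |V_T(f_1,\ldots,f_r)| = (q-1)\bigl(|\mathbb{T}| - |V_{\mathbb{T}}(f_1,\ldots,f_r)|\bigr).\]

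Minimizing both sides over linearly independent $r$-tuples in $KV_d$ yields $d_r(C_d) = (q-1)\, d_r(C^{\mathbb{P}}_d)$, and substituting the value from Theorem~4.1 gives
\[d_r(C_d) = (q-1)(q-2)^{d-1}(q-1)^{s-d-r}[(q-1)^r-1] = (q-2)^{d-1}(q-1)^{s-d-r+1}[(q-1)^r-1],\]
which is exactly the stated formula. There is no real obstacle here: the genuine estimate is packaged inside Theorem~4.1, and the corollary amounts to a bookkeeping step that transfers information between the affine and projective settings via the scaling action, together with an explicit witness that has already been constructed in the proof of Theorem~4.1.
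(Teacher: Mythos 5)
Your proposal is correct and is essentially the paper's own (implicit) argument: the paper leaves the corollary unproved because the proof of Theorem~4.1 already works at the affine level — the lower bound comes from Theorem~3.7 (valid since $d+r-2<2d+r-2<s$) and the explicit polynomials $g_{1},\ldots,g_{r}$ there are evaluated on $T$, with the identity $(q-1)\,|V_{\mathbb{T}}(f_{1},\ldots,f_{r})|=|V_{T}(f_{1},\ldots,f_{r})|$ for homogeneous $f_{i}$ doing the transfer. Your packaging of that same identity as $d_{r}(C_{d})=(q-1)\,d_{r}(C^{\mathbb{P}}_{d})$ (using injectivity of $ev_{d}$ for $q\geq 3$) is a clean and valid way to deduce the corollary from Theorem~4.1.
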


Now, consider the following definition.
\begin{definition}
For a homogeneous polynomial $f \in S$ of degree $d$, we define the polynomial
$$f^{*}(t_{1},~t_{2},\cdots,t_{s}):=t_{1}t_{2}\cdots t_{s}f(t_{1}^{-1},~t_{2}^{-1},\cdots,t_{s}^{-1}).$$
\end{definition}
\vspace{-0.1 cm}
Then, $f^{*} \in S$ and is of degree $s-d$. Also, $(a_{1},~a_{2},\cdots,a_{s}) \in V_{T}(f)$ if and only if $(a_{1}^{-1},~a_{2}^{-1},\cdots,a_{s}^{-1}) \in V_{T}(f^*)$.\\\\

\begin{theorem}
Let $1 \leq r \leq {s \choose d}$. For $s<2d-r+2$, we have 
$$d_{r}(C^{\mathbb{P}}_{d})=(q-2)^{s-d-1}(q-1)^{d-r}[(q-1)^{r}-1].$$
\end{theorem}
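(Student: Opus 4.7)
The plan is to reduce this theorem to Theorem $4.1$ via the involution $f \mapsto f^{*}$ introduced in Definition $4.2$. Observe that the condition $s < 2d-r+2$ is precisely the condition $2(s-d)+r-2 < s$, i.e.\ the hypothesis of Theorem $4.1$ with $d$ replaced by $s-d$. Moreover, the target formula $(q-2)^{s-d-1}(q-1)^{d-r}[(q-1)^{r}-1]$ is exactly the value that Theorem $4.1$ assigns to $d_{r}(C^{\mathbb{P}}_{s-d})$. So it is enough to prove the identity
\[
d_{r}(C^{\mathbb{P}}_{d}) \;=\; d_{r}(C^{\mathbb{P}}_{s-d})
\]
whenever both codes are defined, and then invoke Theorem $4.1$.

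First, I would verify that $f\mapsto f^{*}$ restricts to a $K$-linear isomorphism $KV_{d} \to KV_{s-d}$. Linearity is immediate from the formula. If $t^{a}$ is a square-free monomial of degree $d$ (i.e.\ $a \in \{0,1\}^{s}$ has weight $d$), then $(t^{a})^{*}=t^{(1,\ldots,1)-a}$ is a square-free monomial of degree $s-d$. Hence $f\mapsto f^{*}$ permutes the basis of square-free monomials and, since $(f^{*})^{*}=f$, is an involution. In particular, $\{f_{1},\dots,f_{r}\}$ linearly independent in $KV_{d}$ implies $\{f_{1}^{*},\dots,f_{r}^{*}\}$ linearly independent in $KV_{s-d}$.

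Next, I would establish the geometric content of the involution on the projective torus. Since each representative $Q=[1:b_{2}:\cdots:b_{s}] \in \mathbb{T}$ has coordinates in $K^{*}$, the map
\[
\sigma\colon \mathbb{T}\longrightarrow \mathbb{T}, \qquad [1:b_{2}:\cdots:b_{s}] \longmapsto [1:b_{2}^{-1}:\cdots:b_{s}^{-1}]
\]
is a bijection. For homogeneous $f$ of degree $d$ and any $b_{2},\dots,b_{s}\in K^{*}$,
\[
f^{*}(1,b_{2}^{-1},\ldots,b_{s}^{-1}) \;=\; b_{2}^{-1}\cdots b_{s}^{-1}\,f(1,b_{2},\ldots,b_{s}),
\]
so $f(Q)=0$ if and only if $f^{*}(\sigma(Q))=0$. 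Applying this to each member of a system $F'=\{f_{1},\dots,f_{r}\}$ shows that $\sigma$ restricts to a bijection $V_{\mathbb{T}}(F') \to V_{\mathbb{T}}(F'^{*})$, where $F'^{*}=\{f_{1}^{*},\dots,f_{r}^{*}\}$. In particular $|V_{\mathbb{T}}(F')|=|V_{\mathbb{T}}(F'^{*})|$.

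Combining the two facts, the map $F' \mapsto F'^{*}$ sets up a bijection between $r$-dimensional linearly independent subsets of $KV_{d}$ and those of $KV_{s-d}$, preserving the number of common zeros in $\mathbb{T}$. Taking the minimum of $|\mathbb{T}|-|V_{\mathbb{T}}(F')|$ over both sides therefore gives $d_{r}(C^{\mathbb{P}}_{d})=d_{r}(C^{\mathbb{P}}_{s-d})$. Since $2(s-d)+r-2<s$ is exactly the hypothesis needed to apply Theorem $4.1$ with $d$ replaced by $s-d$, we conclude
\[
d_{r}(C^{\mathbb{P}}_{d}) \;=\; d_{r}(C^{\mathbb{P}}_{s-d}) \;=\; (q-2)^{s-d-1}(q-1)^{d-r}\bigl[(q-1)^{r}-1\bigr],
\]
which is the desired formula. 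The only step that requires any care is the coordinate-wise inversion bookkeeping in the projective setting; everything else is bookkeeping on the involution $f\mapsto f^{*}$.
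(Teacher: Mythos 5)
Your argument is correct, and the key checks all go through: $f\mapsto f^{*}$ does send square-free monomials $t^{a}$, $a\in\{0,1\}^{s}$ of weight $d$, to $t^{(1,\ldots,1)-a}$, so it is a $K$-linear involution $KV_{d}\to KV_{s-d}$ preserving linear independence; the identity $f^{*}(1,b_{2}^{-1},\ldots,b_{s}^{-1})=b_{2}^{-1}\cdots b_{s}^{-1}f(1,b_{2},\ldots,b_{s})$ with all $b_{i}\in K^{*}$ gives $|V_{\mathbb{T}}(F')|=|V_{\mathbb{T}}(F'^{*})|$; and $s<2d-r+2$ is indeed equivalent to $2(s-d)+r-2<s$, with the resulting formula from Theorem $4.1$ (applied with $s-d$ in place of $d$) matching the stated one. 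The paper uses the same involution, but only for the lower bound: there, Theorem $4.1$'s zero-count estimate is applied to $f_{1}^{*},\ldots,f_{r}^{*}\in KV_{s-d}$ to get $d_{r}(C^{\mathbb{P}}_{d})\geq(q-2)^{s-d-1}(q-1)^{d-r}[(q-1)^{r}-1]$, and the reverse inequality is then proved by writing down explicit polynomials $g'_{1},\ldots,g'_{r}\in KV_{d}$ and computing their common zeros by inclusion--exclusion. Your version upgrades the involution to the full symmetry $d_{r}(C^{\mathbb{P}}_{d})=d_{r}(C^{\mathbb{P}}_{s-d})$, which transports the extremal family of Theorem $4.1$ back to $KV_{d}$ automatically (indeed, the paper's $g'_{i}$ are, up to sign, exactly the $*$-images of the $g_{i}$ used in Theorem $4.1$), so you avoid repeating that computation; what the paper's route buys in exchange is an explicit extremal configuration in $KV_{d}$ exhibited in the text. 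Two caveats you share with the paper and may wish to state: the characterization of $d_{r}$ as a minimum over linearly independent $r$-subsets presumes the evaluation map is injective on $KV_{d}$ (so $q\geq3$), and the reduction tacitly assumes $d<s$ so that $s-d\geq1$ and $C^{\mathbb{P}}_{s-d}$ is actually one of the codes covered by Theorem $4.1$ (the case $d=s$ forces $r=1$ and is already settled by Theorem $2.2$).
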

\begin{proof}
For $f_{1},~f_{2},\cdots, f_{r} \in KV_{d}$ linearly independent polynomials, we have 
\begin{align*}
|V_{\mathbb{T}}(f_{1},~f_{2},\cdots,f_{r})|&=|V_{\mathbb{T}}(f_{1}^{*},~f_{2}^{*},\cdots,f_{r}^{*})|.\\
\end{align*}
Now, $f_{1}^{*},~f_{2}^{*},\cdots, f_{r}^{*} \in KV_{s-d}$. Also, if $v=s-d$ then $2v+r-2<s$. Thus, by Theorem $4.1$, we have 
$$|V_{\mathbb{T}}(f_{1}^{*},~f_{2}^{*},\cdots,f_{r}^{*})| \leq (q-1)^{s-1}-(q-2)^{v-1}(q-1)^{s-v-r}[(q-1)^{r}-1].$$
Therefore, $$d_{r}(C^{\mathbb{P}}_{d}))\geq (q-2)^{s-d-1}(q-1)^{d-r}[(q-1)^{r}-1].$$

For the converse, consider the polynomials $g'_{1},~g'_{2},\cdots,g'_{r}$ where 
$$g'_{1}:=(t_{1}-t_{2})(t_{3}-t_{4})\cdots (t_{2v-3}-t_{2v-2})(t_{2v-1}-t_{2v})t_{2v+1}t_{2v+2}\cdots t_{s},$$
$$g'_{2}:=(t_{1}-t_{2})(t_{3}-t_{4})\cdots (t_{2v-3}-t_{2v-2})(t_{2v}-t_{2v+1})t_{2v-1}t_{2v+2}t_{2v+3}\cdots t_{s},$$
$$g'_{3}:=(t_{1}-t_{2})(t_{3}-t_{4})\cdots (t_{2v-3}-t_{2v-2})(t_{2v+1}-t_{2v+2})t_{2v-1}t_{2v}t_{2v+3}\cdots t_{s},$$
$$\vdots$$
$$g'_{r}:=(t_{1}-t_{2})(t_{3}-t_{4})\cdots (t_{2v-3}-t_{2v-2})(t_{2v+r-2}-t_{2v+r-1})t_{2v-1}t_{2v} \cdots t_{2v+r-3}t_{2v+r} \cdots t_{s}.$$ \\
Then $g'_{1},~g'_{2},\cdots g'_{r} \in KV_{d}$ and are linearly independent. Let $g:=(t_{1}-t_{2})(t_{3}-t_{4})\cdots (t_{2v-3}-t_{2v-2})$ and $h_{i}:=(t_{2v+i-2}-t_{2v+i-1})$, for $1 \leq i \leq r$. Let $T_{1}=(\mathbb{F}_{q}^{*})^{2v-2}$. Then proceeding as in Theorem $4.1$, we get
\begin{align*}
|V_{T}(g'_{1},g'_{2}, \cdots,g'_{r})|&=|V_{T}(g'_{1})\cap V_{T}(g'_{2})\cap \cdots \cap V_{T}(g'_{r}))|\\
&=|V_{T}(g) \cup (V_{T}(h_{1})\cap V_{T}( h_{2})\cap \cdots \cap V_{T}( h_{r})) |\\
&=(q-1)^{s-2v+2} |V_{T_{1}}(g)| +(q-1)^{s-r}-(q-1)^{s-2v-r+2} |V_{T_{1}}(g)|\\
&=(q-1)^{s-r}+(q-1)^{s-2v-r+2}[(q-1)^{r}-1] \left[ \sum_{i=1}^{v-1} (-1)^{i-1} { v-1 \choose i} (q-1)^{2v-2-i}\right]\\
&=(q-1)^{s}-(q-1)^{s-v-r+1}(q-2)^{v-1}[(q-1)^{r}-1]\\
&=(q-1)^{s}-(q-1)^{d-r+1}(q-2)^{s-d-1}[(q-1)^{r}-1]\\
&=(q-1)| V_{\mathbb{T}}(g'_{1},g'_{2}, \cdots,g'_{r})|
\end{align*}
This implies $$d_{r}(C^{\mathbb{P}}_{d})\leq (q-2)^{s-d-1}(q-1)^{d-r}[(q-1)^{r}-1].$$
This proves the result.
\end{proof}

\begin{corollary}
Let $1 \leq r \leq {s \choose d}$. For $s<2d-r+2$, we have 
$$d_{r}(C_{d})=(q-2)^{s-d-1}(q-1)^{d-r+1}[(q-1)^{r}-1].$$
\end{corollary}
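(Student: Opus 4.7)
The plan is to derive Corollary 4.4 from Theorem 4.3 using the relationship between counts in the affine torus $T$ and the projective torus $\mathbb{T}$ for homogeneous polynomials, which already appeared implicitly in the proof of Theorem 4.1. Concretely, I would show that $d_{r}(C_{d}) = (q-1)\, d_{r}(C^{\mathbb{P}}_{d})$ and then substitute the formula from Theorem 4.3.

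First I would establish that for any linearly independent $f_{1},\ldots,f_{r} \in KV_{d}$, the affine zero set $V_{T}(f_{1},\ldots,f_{r})$ is stable under the scaling action of $\mathbb{F}_{q}^{*}$: since each $f_{i}$ is homogeneous of degree $d$, $f_{i}(\lambda P) = \lambda^{d} f_{i}(P)$, so if all $f_{i}$ vanish at $P \in T$ they also vanish at $\lambda P$ for every $\lambda \in \mathbb{F}_{q}^{*}$. Hence $V_{T}(f_{1},\ldots,f_{r})$ decomposes into $\mathbb{F}_{q}^{*}$-orbits of size $q-1$, each projecting bijectively onto a point of $V_{\mathbb{T}}(f_{1},\ldots,f_{r})$. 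This yields the identity
\[
|V_{T}(f_{1},\ldots,f_{r})| \;=\; (q-1)\, |V_{\mathbb{T}}(f_{1},\ldots,f_{r})|,
\]
which is exactly the relation used at the start of the proof of Theorem 4.1.

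Next I would note that the kernels of the affine and projective evaluation maps on $KV_{d}$ coincide: a homogeneous polynomial in $KV_{d}$ vanishes on all of $T$ if and only if it vanishes on a complete set of projective representatives, by the same homogeneity argument. Consequently the $r$-dimensional $K$-subspaces of $C_{d}$ are in natural bijection with those of $C^{\mathbb{P}}_{d}$, and for a subcode corresponding to $\{f_{1},\ldots,f_{r}\}$ the support has size
\[
(q-1)^{s} - |V_{T}(f_{1},\ldots,f_{r})| \;=\; (q-1)\bigl[\,\bar{m} - |V_{\mathbb{T}}(f_{1},\ldots,f_{r})|\,\bigr].
\]
Taking the minimum over all such subcodes gives $d_{r}(C_{d}) = (q-1)\, d_{r}(C^{\mathbb{P}}_{d})$.

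Finally, plugging in Theorem 4.3 under the hypothesis $s < 2d - r + 2$ produces
\[
d_{r}(C_{d}) \;=\; (q-1)\cdot(q-2)^{s-d-1}(q-1)^{d-r}[(q-1)^{r}-1] \;=\; (q-2)^{s-d-1}(q-1)^{d-r+1}[(q-1)^{r}-1],
\]
as required. There is no real technical obstacle; the only point that needs to be checked carefully is the equality of the two kernels, which is the routine homogeneity observation above. This is also exactly the argument that turns Theorem 4.1 into Corollary 4.2, so the present corollary is a direct analogue.
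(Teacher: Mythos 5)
Your proposal is correct and matches the paper's (implicit) derivation: the corollary is obtained from Theorem 4.4 exactly as Corollary 4.2 is obtained from Theorem 4.1, via the homogeneity relation $|V_{T}(f_{1},\ldots,f_{r})|=(q-1)\,|V_{\mathbb{T}}(f_{1},\ldots,f_{r})|$, which already appears in the displayed computations of both theorems and yields $d_{r}(C_{d})=(q-1)\,d_{r}(C^{\mathbb{P}}_{d})$. Substituting the projective formula then gives $(q-2)^{s-d-1}(q-1)^{d-r+1}[(q-1)^{r}-1]$, as you state.
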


If $r=1$, Theorem $4.1$, Theorem $4.4$, Corollary $4.2$ and Corollary $4.5$ determine the minimum distance of $C_{d}$ and $C^{\mathbb{P}}_{d}$ as in Theorem $2.2$. If $r=2$, these results determine the second generalized Hamming weight of $C_{d}$ and $C^{\mathbb{P}}_{d}$ for $s>2d$ and $s<2d$. Similarly, if $r=3$, we get the third generalized Hamming weight of these codes for $s>2d+1$ and $s<2d-1$. We give estimates on the second and third generalized Hamming weights of $C_{d}$ and $C^{\mathbb{P}}_{d}$, in the rest of the cases.\\

Note that, for $q=2$ or $s=d$, $dim_{\mathbb{F}_{q}} C_{d}=dim_{\mathbb{F}_{q}} C^{\mathbb{P}}_{d}=1$. Thus, the second generalized Hamming weight of these codes doesn't make sense. So, we assume that $q \geq 3$ and $d<s$ when calculating the second generalized weight of these codes. Similarly, we assume that ${s \choose d} \geq 3$ and $q \geq 3$ when calculating the third generalized weight of these codes.\\

\begin{proposition}
For $s=2d$, there exists $f_{1},f_{2} \in KV_{d}$ linearly independent over $K$ such that 
 $$|V_{\mathbb{T}}(f_{1}) \cap V_{\mathbb{T}}(f_{2})|=(q-1)^{s-1}-(q-2)^{d-1}(q-1)^{s-d}.$$ Therefore, $$d_{2}(C^{\mathbb{P}}_{d}) \leq (q-2)^{d-1}(q-1)^{s-d}$$ and $$d_{2}(C_{d}) \leq (q-2)^{d-1}(q-1)^{s-d+1}.$$
\end{proposition}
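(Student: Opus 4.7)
The plan is to exhibit an explicit pair of linearly independent polynomials in $KV_d$ whose common zero set in $T$ has the required cardinality. Since the statement concerns $d_2$, we may assume $q\geq 3$ (otherwise the codes have dimension one), and in particular there exists $\alpha\in K^\ast$ with $\alpha\neq 1$. Set
\[
g := (t_1-t_2)(t_3-t_4)\cdots(t_{2d-3}-t_{2d-2}),\qquad
f_1 := g\,(t_{2d-1}-t_{2d}),\qquad
f_2 := g\,(t_{2d-1}-\alpha\,t_{2d}).
\]
Since $s=2d$, each of $f_1,f_2$ is a product of $d$ linear forms in pairwise disjoint pairs of variables; expanding shows that each monomial appearing is square-free, so $f_1,f_2\in KV_d$. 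They are linearly independent because the ratio $f_1/f_2=(t_{2d-1}-t_{2d})/(t_{2d-1}-\alpha t_{2d})$ is non-constant when $\alpha\neq 1$.

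Next I would compute $V_T(f_1)\cap V_T(f_2)$. For $a\in T$, membership is equivalent to
\[
\bigl(g(a)=0\ \text{or}\ a_{2d-1}=a_{2d}\bigr)\ \text{and}\ \bigl(g(a)=0\ \text{or}\ a_{2d-1}=\alpha\,a_{2d}\bigr),
\]
so the intersection equals $V_T(g)\cup\bigl(\{a_{2d-1}=a_{2d}\}\cap\{a_{2d-1}=\alpha a_{2d}\}\bigr)$. The second set forces $(1-\alpha)a_{2d}=0$, which together with $a_{2d}\in K^\ast$ and $\alpha\neq 1$ is impossible. Hence $V_T(f_1)\cap V_T(f_2)=V_T(g)$.

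Since $g$ only involves $t_1,\dots,t_{2d-2}$, we have $V_T(g)=U\times (K^\ast)^{s-(2d-2)}$ where $U:=\{a\in(K^\ast)^{2d-2}:a_{2i-1}=a_{2i}\ \text{for some}\ 1\leq i\leq d-1\}$. A direct inclusion-exclusion (or equivalently Theorem $2.2$ applied to $g$ on $(K^\ast)^{2d-2}$) gives $|U^c|=(q-1)^{d-1}(q-2)^{d-1}$, hence
\[
|V_T(g)|=(q-1)^{s-2d+2}\bigl[(q-1)^{2d-2}-(q-2)^{d-1}(q-1)^{d-1}\bigr]=(q-1)^s-(q-2)^{d-1}(q-1)^{s-d+1}.
\]
Dividing by $q-1$ yields $|V_{\mathbb{T}}(f_1)\cap V_{\mathbb{T}}(f_2)|=(q-1)^{s-1}-(q-2)^{d-1}(q-1)^{s-d}$, which is exactly the claim. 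The bounds on $d_2(C^{\mathbb{P}}_d)$ and $d_2(C_d)$ then follow by subtracting from $|\mathbb{T}|$ and $|T|$, respectively.

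The main conceptual point—rather than an obstacle—is the choice of the last factor: replacing $(t_{2d-1}-t_{2d})$ in $f_1$ by $(t_{2d-1}-\alpha t_{2d})$ with $\alpha\neq 1$ makes the two ``extra'' hyperplanes disjoint in the torus, so the intersection collapses onto $V_T(g)$ and the count becomes exact. The routine verifications are that the product form keeps $f_1,f_2$ square-free (immediate since the last factor involves new variables $t_{2d-1},t_{2d}$ not appearing in $g$) and that the scaled ratio argument correctly witnesses linear independence.
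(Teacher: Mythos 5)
Your proposal is correct and takes essentially the same approach as the paper: both exhibit two explicit products of $d$ linear factors sharing the common factor $g=(t_1-t_2)(t_3-t_4)\cdots(t_{2d-3}-t_{2d-2})$, arrange the last factors so that their common zero set in the torus is empty, reduce the intersection to $V_T(g)$, and count it to get $(q-1)^s-(q-2)^{d-1}(q-1)^{s-d+1}$ before dividing by $q-1$. The only difference is cosmetic: the paper's second polynomial is $g\,t_{2d}$ (whose extra factor has no zeros in $T$, so no auxiliary $\alpha$ is needed), whereas you use $g\,(t_{2d-1}-\alpha t_{2d})$ with $\alpha\neq 1$, and you count $|V_T(g)|$ via the complement rather than by inclusion--exclusion.
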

\begin{proof}
Consider the polynomials $$f''_{1}:=(t_{1}-t_{2})(t_{3}-t_{4})\cdots(t_{2d-3}-t_{2d-2})(t_{2d-1}-t_{2d})$$ and $$f''_{2}:= (t_{1}-t_{2})(t_{3}-t_{4})\cdots(t_{2d-3}-t_{2d-2})t_{2d}.$$ Then $f''_{1},f''_{2} \in KV_{d}$ are linearly independent over $K$. For $1 \leq i \leq d$, let $h_{i}:=(t_{2i-1}-t_{2i})$. Then 
\begin{align*}
|V_{T}(f''_{1}) \cap V_{T}(f''_{2})|&=|(V_{T}(h_{1}h_{2}\cdots h_{d-1}) \cup V_{T}(h_{d})) \cap (V_{T}(h_{1}h_{2}\cdots h_{d-1}) \cup V_{T}(t_{2d}))|\\
&=|V_{T}(h_{1}h_{2}\cdots h_{d-1})|\\
&=\sum_{i=1}^{d-1} (-1)^{i-1} {d-1 \choose i} (q-1)^{s-i}\\
&=(q-1)^{s}-(q-1)^{s-d+1}(q-2)^{d-1}\\
&=(q-1)|V_{\mathbb{T}}(f''_{1}) \cap V_{\mathbb{T}}(f''_{2})|.
\end{align*}
Thus, for $s=2d$, $$d_{2}(C^{\mathbb{P}}_{d})\leq (q-2)^{d-1}(q-1)^{s-d}$$ and $$d_{2}(C_{d})\leq (q-2)^{d-1}(q-1)^{s-d+1}.$$
\end{proof}

\begin{proposition}
Let $s=2d+1$ and ${ s \choose d} \geq 3$. There exists $f_{1},f_{2}, f_{3} \in KV_{d}$ linearly independent over $K$ such that
$$|V_{\mathbb{T}}(f_{1}) \cap V_{\mathbb{T}}(f_{2}) \cap V_{\mathbb{T}}(f_{3})|=(q-1)^{s-1}-(q-2)^{d-1}(q-1)^{s-d}.$$
Therefore, $d_{3}(C^{\mathbb{P}}_{d}) \leq (q-2)^{d-1}(q-1)^{s-d}$ and $d_{3}(C_{d})\leq (q-2)^{d-1}(q-1)^{s-d+1}.$
\end{proposition}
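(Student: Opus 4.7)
I would proceed by exhibiting an explicit triple of polynomials in $KV_d$, in direct analogy with the pair constructed in Proposition 4.6. A first attempt using three linear differences of the form $t_a-t_b$ among three fresh variables fails because any three such forms are linearly dependent; the workaround is to take the third polynomial's last factor to be a pure variable, exploiting the fact that the torus $T$ avoids the coordinate hyperplanes.

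Concretely, I would set $g := (t_1-t_2)(t_3-t_4)\cdots(t_{2d-3}-t_{2d-2})$ (interpreted as the empty product $1$ when $d=1$), and define
\[
f_1 := g\,(t_{2d-1}-t_{2d}),\qquad f_2 := g\,(t_{2d}-t_{2d+1}),\qquad f_3 := g\,t_{2d+1}.
\]
Each $f_i$ is a homogeneous square-free polynomial of degree $d$, hence lies in $KV_d$. Linear independence is immediate: $g$ uses only $t_1,\ldots,t_{2d-2}$, while the three cofactors $t_{2d-1}-t_{2d}$, $t_{2d}-t_{2d+1}$, $t_{2d+1}$ are linearly independent linear forms in the disjoint variables $t_{2d-1},t_{2d},t_{2d+1}$.

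For the evaluation I would first observe that $V_T(t_{2d+1})=\emptyset$, which forces $V_T(f_3)=V_T(g)\cup V_T(t_{2d+1})=V_T(g)$. Since $g$ also divides $f_1$ and $f_2$, we have $V_T(g)\subseteq V_T(f_i)$ for $i=1,2$, and the triple intersection collapses to
\[
V_T(f_1)\cap V_T(f_2)\cap V_T(f_3) = V_T(g).
\]
Then, by inclusion–exclusion on $V_T(g)=\bigcup_{i=1}^{d-1}V_T(t_{2i-1}-t_{2i})$, using that any $k$-fold intersection of such hyperplanes meets $T$ in $(q-1)^{s-k}$ points, and simplifying with the binomial identity $((q-1)-1)^{d-1}=(q-2)^{d-1}$, one obtains
\[
|V_T(g)|=(q-1)^s-(q-1)^{s-d+1}(q-2)^{d-1}.
\]
Dividing by $q-1$ gives the projective cardinality $(q-1)^{s-1}-(q-2)^{d-1}(q-1)^{s-d}$, which is exactly the claimed value. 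Subtracting from $|\mathbb{T}|=(q-1)^{s-1}$ and $|T|=(q-1)^s$ respectively yields the two upper bounds on $d_3(C^{\mathbb{P}}_d)$ and $d_3(C_d)$.

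The only genuine content here is the choice of construction (in particular, using a monomial factor for $f_3$ to make $V_T(f_3)=V_T(g)$ and thereby avoid the linear dependence obstruction); once that is in place, the calculation is a short inclusion–exclusion identical in spirit to the ones carried out in Theorem 4.1 and Proposition 4.6, so no serious obstacle remains.
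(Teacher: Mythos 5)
Your construction is correct and follows essentially the same strategy as the paper: exhibit three linearly independent elements of $KV_d$ sharing the factor $g=(t_1-t_2)\cdots(t_{2d-3}-t_{2d-2})$ whose triple zero set in $T$ collapses to $V_T(g)$, then count $|V_T(g)|$ by inclusion--exclusion and divide by $q-1$. The paper's choice is even slightly simpler --- it takes $f_1=g\,t_{2d-1}$, $f_2=g\,t_{2d}$, $f_3=g\,t_{2d+1}$, so that $V_T(f_i)=V_T(g)$ for every $i$ --- but this is only a cosmetic difference from your cofactors $t_{2d-1}-t_{2d}$, $t_{2d}-t_{2d+1}$, $t_{2d+1}$.
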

\begin{proof}
Consider the following polynomials
$$f_{1}:=(t_{1}-t_{2})(t_{3}-t_{4}) \cdots (t_{2d-3}-t_{2d-2})t_{2d-1},$$
$$f_{2}:=(t_{1}-t_{2})(t_{3}-t_{4}) \cdots (t_{2d-3}-t_{2d-2})t_{2d},$$
and
$$f_{3}:=(t_{1}-t_{2})(t_{3}-t_{4}) \cdots (t_{2d-3}-t_{2d-2})t_{2d+1}.$$
Let $g:=(t_{1}-t_{2})(t_{3}-t_{4}) \cdots (t_{2d-3}-t_{2d-2})$. Then,
\begin{align*}
\mid V_{T}(f_{1}) \cap V_{T}(f_{2}) \cap V_{T}(f_{3}) \mid &=\mid V_{T}(g) \mid\\
&=\sum_{i=1}^{d-1} (-1)^{i-1} {d-1 \choose i} (q-1)^{s-i}\\
&=(q-1)^{s}-(q-2)^{d-1}(q-1)^{s-d+1}\\
&=(q-1) |V_{\mathbb{T}}(f_{1}) \cap V_{\mathbb{T}}(f_{2}) \cap V_{\mathbb{T}}(f_{3})|.
\end{align*}

This implies $d_{3}(C^{\mathbb{P}}_{d}) \leq (q-2)^{d-1}(q-1)^{s-d}$ and $d_{3}(C_{d})\leq (q-2)^{d-1}(q-1)^{s-d+1}.$\\
\end{proof}

\begin{proposition}
Let $s=2d$ and ${ s \choose d} \geq 3$. There exists $f_{1},f_{2}, f_{3} \in KV_{d}$ linearly independent over $K$ such that
$$|V_{\mathbb{T}}(f_{1}) \cap V_{\mathbb{T}}(f_{2}) \cap V_{\mathbb{T}}(f_{3})|= (q-1)^{s-1}-(q-2)^{d-1}(q-1)^{s-d-2}[q(q-1)-1].$$
Therefore, $$d_{3}(C^{\mathbb{P}}_{d}) \leq (q-2)^{d-1}(q-1)^{d-2}[q(q-1)-1]$$ and $$d_{3}(C_{d}) \leq (q-2)^{d-1}(q-1)^{d-1}[q(q-1)-1].$$
\end{proposition}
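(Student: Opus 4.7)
The plan is to exhibit explicit polynomials realizing the stated count. Set
\[
g := \prod_{i=1}^{d-1}(t_{2i-1} - t_{2i}),
\]
a square-free polynomial of degree $d-1$, and take
\[
f_1 := g \cdot t_{2d-1}, \qquad f_2 := g \cdot t_{2d}, \qquad f_3 := (t_1 - t_{2d-1})(t_2 - t_{2d}) \prod_{i=2}^{d-1}(t_{2i-1} - t_{2i}).
\]
First I would check that each $f_k$ lies in $KV_d$: each is a product of $d$ linear factors whose variables form a partition of $\{t_1,\dots,t_{2d}\}$ or a subset thereof, so each expansion is square-free of degree $d$. Their leading monomials under $\prec$ are
\[
t_1 t_3 \cdots t_{2d-3} t_{2d-1}, \qquad t_1 t_3 \cdots t_{2d-3} t_{2d}, \qquad t_1 t_2 t_3 t_5 \cdots t_{2d-3},
\]
which are pairwise distinct, so $f_1, f_2, f_3$ are linearly independent over $K$.

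Since $t_{2d-1}$ and $t_{2d}$ are invertible on $T$, one has $V_T(f_1) = V_T(f_2) = V_T(g)$, hence
\[
V_T(f_1) \cap V_T(f_2) \cap V_T(f_3) = V_T(g) \cap V_T(f_3).
\]
Writing $A_i := V_T(t_{2i-1} - t_{2i})$ for $i = 1, \ldots, d-1$, we have $V_T(g) = A_1 \cup \cdots \cup A_{d-1}$. The decisive observation is that $(t_{2i-1} - t_{2i})$ is a factor of $f_3$ for every $i = 2, \ldots, d-1$, so $A_i \subseteq V_T(f_3)$ and $A_i \cap V_T(f_3)^c = \emptyset$ for such $i$; consequently
\[
V_T(g) \cap V_T(f_3)^c = A_1 \cap V_T(f_3)^c.
\]

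Finally I would compute both sides. On $A_1$ (where $t_2 = t_1$), $f_3$ is nonzero precisely when $t_{2i-1} \neq t_{2i}$ for $2 \leq i \leq d-1$ and $t_{2d-1} \neq t_1$ and $t_{2d} \neq t_1$, which gives directly
\[
|A_1 \cap V_T(f_3)^c| = (q-1) \cdot [(q-1)(q-2)]^{d-2} \cdot (q-2)^2 = (q-1)^{d-1}(q-2)^d.
\]
Inclusion-exclusion on the $d-1$ factors of $g$, as in the proof of Theorem $4.1$, yields $|V_T(g)| = (q-1)^{2d} - (q-1)^{d+1}(q-2)^{d-1}$. Subtracting and using the identity $(q-1)^2 + (q-2) = q(q-1) - 1$ produces the asserted formula for $|V_T|$; dividing by $q-1$ gives the projective count, and the bounds on $d_3(C^{\mathbb{P}}_d)$ and $d_3(C_d)$ follow. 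The main subtlety, which I expect to be the hardest thing to get right, is the choice of $f_3$: the more symmetric candidate $(t_1 - t_3)(t_2 - t_4)\prod_{i \geq 3}(t_{2i-1} - t_{2i})$ only hits the target when $q = 3$, because it fails to vanish on $A_2$. The asymmetric form used above is engineered so that $f_3$ vanishes on every $A_i$ with $i \geq 2$, pinning the ``exceptional'' set $V_T(g) \cap V_T(f_3)^c$ entirely inside $A_1$ and producing the sharp count.
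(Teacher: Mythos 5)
Your proof is correct, and it is a genuinely different argument from the paper's, both in the choice of witnesses and in how the count is organized. The paper takes $f_{1}=(t_{1}-t_{2})\cdots(t_{2d-5}-t_{2d-4})(t_{2d-3}-t_{2d-2})(t_{2d-1}-t_{2d})$, $f_{2}=(t_{1}-t_{2})\cdots(t_{2d-5}-t_{2d-4})(t_{2d-3}-t_{2d-1})(t_{2d-2}-t_{2d})$, $f_{3}=(t_{1}-t_{2})\cdots(t_{2d-5}-t_{2d-4})(t_{2d-3}-t_{2d-2})t_{2d}$, writes the triple intersection as the union $V_{T}(g)\cup V_{T}(t_{2d-3}-t_{2d-1},t_{2d-3}-t_{2d-2})\cup V_{T}(t_{2d-2}-t_{2d},t_{2d-3}-t_{2d-2})$ with $g$ of degree $d-2$, and evaluates it by a three-set inclusion--exclusion. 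You instead take $f_{1}=g't_{2d-1}$, $f_{2}=g't_{2d}$ with $g'=\prod_{i=1}^{d-1}(t_{2i-1}-t_{2i})$ of degree $d-1$, so that on the torus $V_{T}(f_{1})\cap V_{T}(f_{2})=V_{T}(g')$ exactly, and engineer $f_{3}$ to contain every factor of $g'$ except $(t_{1}-t_{2})$, which confines the exceptional set $V_{T}(g')\setminus V_{T}(f_{3})$ to your $A_{1}$; the count then reduces to a single subtraction, $\bigl[(q-1)^{2d}-(q-1)^{d+1}(q-2)^{d-1}\bigr]-(q-1)^{d-1}(q-2)^{d}$, and the identity $(q-1)^{2}+(q-2)=q(q-1)-1$ gives the stated value, with division by $q-1$ legitimate because the $f_{i}$ are homogeneous, exactly as in Theorem $4.1$. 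I checked your leading monomials, the inclusion $A_{i}\subseteq V_{T}(f_{3})$ for $i\geq 2$, the two counts, and the boundary case $d=2$ (empty product), where $(q-1)^{3}-(q-1)(q-2)^{2}=(q-1)(2q-3)$ indeed matches; the upper bounds on $d_{3}(C^{\mathbb{P}}_{d})$ and $d_{3}(C_{d})$ then follow from the definition of $d_{r}$ used in Section $4$, just as in the paper. Your route buys a more transparent computation (no inclusion--exclusion over three sets), at the cost of a less symmetric $f_{3}$; the paper's witnesses stay closer in shape to those of Theorem $4.1$ and Proposition $4.6$. The only weak spot is the closing aside about the ``symmetric candidate,'' whose indexing is ambiguous (for $d=2$ it coincides with your $f_{3}$, and with upper limit $d-1$ it has the wrong degree), but nothing in the proof depends on that remark.
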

\begin{proof}
Let $T_{1}:=(\mathbb{F}_{q}^{*})^{2d-4}$ and consider the polynomials
$$f_{1}:=(t_{1}-t_{2})(t_{3}-t_{4}) \cdots (t_{2d-5}-t_{2d-4})(t_{2d-3}-t_{2d-2})(t_{2d-1}-t_{2d}),$$
$$f_{2}:=(t_{1}-t_{2})(t_{3}-t_{4}) \cdots (t_{2d-5}-t_{2d-4})(t_{2d-3}-t_{2d-1})(t_{2d-2}-t_{2d}),$$
and
$$f_{3}:=(t_{1}-t_{2})(t_{3}-t_{4}) \cdots (t_{2d-5}-t_{2d-4})(t_{2d-3}-t_{2d-2})t_{2d}.$$
~\\
Let $g:=(t_{1}-t_{2})(t_{3}-t_{4}) \cdots (t_{2d-5}-t_{2d-4})$. Then,
\begin{align*}
&|V_{T}(f_{1}) \cap V_{T}(f_{2}) \cap V_{T}(f_{3})|\\ &=|V_{T}(g) \cup V_{T}(t_{2d-3}-t_{2d-1},t_{2d-3}-t_{2d-2}) \cup V_{T}(t_{2d-2}-t_{2d},t_{2d-3}-t_{2d-2})|\\
&=2(q-1)^{s-2}-(q-1)^{s-3}+(q-1)^{s-2d+1}[(q-1)^{3}-2(q-1)+1] |V_{T_{1}}(g)|\\
&=2(q-1)^{s-2}-(q-1)^{s-3}+[(q-1)^{s-2d+4}-2(q-1)^{s-2d+2}\\
&\hspace{3.5 cm}+(q-1)^{s-2d+1}]\left[-\sum_{i=1}^{d-2} (-1)^{i} { d-2 \choose i}(q-1)^{2d-4-i} \right]\\
&=(q-1)^{s}-(q-2)^{d-1}(q-1)^{d-1}[q(q-1)-1]\\
&=(q-1)|V_{\mathbb{T}}(f_{1}) \cap V_{\mathbb{T}}(f_{2}) \cap V_{\mathbb{T}}(f_{3})|.
\end{align*}

Therefore, $$d_{3}(C^{\mathbb{P}}_{d}) \leq (q-2)^{d-1}(q-1)^{d-2}[q(q-1)-1]$$ and $$d_{3}(C_{d}) \leq (q-2)^{d-1}(q-1)^{d-1}[q(q-1)-1].$$
\end{proof}

\begin{proposition}
Let $s=2d-1$ and ${s \choose d} \geq 3$. There exists $f_{1},f_{2}, f_{3} \in KV_{d}$ linearly independent over $K$ such that 
$$\mid V_{\mathbb{T}}(f_{1}) \cap V_{\mathbb{T}}(f_{2}) \cap V_{\mathbb{T}}(f_{3}) \mid = (q-1)^{s-1}-(q-2)^{d-2}(q-1)^{s-d+1}.$$ Therefore, $$d_{3}(C^{\mathbb{P}}_{d}) \leq (q-2)^{d-2}(q-1)^{s-d+1}$$ and $$d_{3}(C_{d}) \leq (q-2)^{d-2}(q-1)^{s-d+2}.$$
\end{proposition}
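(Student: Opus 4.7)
\emph{The plan} is to mimic the strategy of Propositions 4.6--4.8: exhibit three explicit linearly independent polynomials in $KV_d$ whose common zero locus in $T$ coincides with the zero set of a single auxiliary polynomial $g$ of degree $d-2$, and then count that zero set by inclusion-exclusion. Set
$$g := (t_1-t_2)(t_3-t_4)\cdots(t_{2d-5}-t_{2d-4})$$
(with the convention $g=1$ when $d=2$), and define
\begin{align*}
f_1 &:= g\cdot t_{2d-3}t_{2d-2},\\
f_2 &:= g\cdot t_{2d-3}t_{2d-1},\\
f_3 &:= g\cdot t_{2d-2}t_{2d-1}.
\end{align*}
Each $f_i$ is a product of $g$ with a square-free degree-$2$ monomial in the disjoint variables $t_{2d-3},t_{2d-2},t_{2d-1}$, so every $f_i$ is homogeneous, square-free, of degree $d$, and hence lies in $KV_d$. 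Linear independence over $K$ follows from the three distinct leading monomials, which share the common prefix $t_1t_3\cdots t_{2d-5}$ coming from $g$ but differ in the degree-$2$ suffix.

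The key simplification is that each trailing factor $h_i$ is a product of two coordinate functions, so $V_T(h_i)=\emptyset$ on the affine torus $T=(K^*)^s$. Consequently $V_T(f_i)=V_T(g)\cup V_T(h_i)=V_T(g)$ for every $i$, and therefore
$$\bigcap_{i=1}^3 V_T(f_i)=V_T(g).$$
This reduces the computation to determining $|V_T(g)|$.

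Now $V_T(g)$ is the union of the $d-2$ sets $V_T(t_{2i-1}-t_{2i})$, each of cardinality $(q-1)^{s-1}$, whose $k$-fold intersections have cardinality $(q-1)^{s-k}$. Inclusion-exclusion combined with the binomial identity $\sum_{k=0}^{d-2}(-1)^k\binom{d-2}{k}(q-1)^{d-2-k}=(q-2)^{d-2}$ yields
$$|V_T(g)|=(q-1)^s-(q-2)^{d-2}(q-1)^{s-d+2}.$$
Dividing by $q-1$ (each projective zero of the homogeneous $f_i$'s lifts to $q-1$ affine zeros in $T$) gives the asserted value of $|V_{\mathbb{T}}(f_1)\cap V_{\mathbb{T}}(f_2)\cap V_{\mathbb{T}}(f_3)|$, and the stated bounds on $d_3(C^{\mathbb{P}}_d)$ and $d_3(C_d)$ follow immediately from the definition of the generalized Hamming weight. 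No serious obstacle arises: the construction is explicit and everything reduces to the binomial identity above; the only care needed is tracking exponents $s-d+2=d+1$ and $s-d+1=d$ correctly when $s=2d-1$.
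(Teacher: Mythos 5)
Your proposal is correct and follows essentially the same route as the paper: it uses exactly the same three polynomials $f_1=g\,t_{2d-3}t_{2d-2}$, $f_2=g\,t_{2d-3}t_{2d-1}$, $f_3=g\,t_{2d-2}t_{2d-1}$ with $g=(t_1-t_2)\cdots(t_{2d-5}-t_{2d-4})$, reduces the common zero set in $T$ to $V_T(g)$, and evaluates $|V_T(g)|$ by inclusion-exclusion before dividing by $q-1$ to pass to the projective torus. The only difference is cosmetic: you spell out the linear-independence and square-freeness checks slightly more explicitly than the paper does.
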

\begin{proof}
Consider the following polynomials
$$f_{1}:=(t_{1}-t_{2})(t_{3}-t_{4}) \cdots (t_{2d-5}-t_{2d-4})t_{2d-3}t_{2d-2},$$
$$f_{2}:=(t_{1}-t_{2})(t_{3}-t_{4}) \cdots (t_{2d-5}-t_{2d-4})t_{2d-3}t_{2d-1},$$
and
$$f_{3}:=(t_{1}-t_{2})(t_{3}-t_{4}) \cdots (t_{2d-5}-t_{2d-4})t_{2d-2}t_{2d-1}.$$
Let $g:=(t_{1}-t_{2})(t_{3}-t_{4}) \cdots (t_{2d-5}-t_{2d-4})$. Then,
\begin{align*}
|V_{T}(f_{1}) \cap V_{T}(f_{2}) \cap V_{T}(f_{3})| &=|V_{T}(g)|\\
&=\sum_{i=1}^{d-2} (-1)^{i-1} {d-2 \choose i} (q-1)^{s-i}\\
&=(q-1)^{s}-(q-2)^{d-2}(q-1)^{s-d+2}\\
&=(q-1) |V_{\mathbb{T}}(f_{1}) \cap V_{\mathbb{T}}(f_{2}) \cap V_{\mathbb{T}}(f_{3})|.
\end{align*}
Therefore, $d_{3}(C^{\mathbb{P}}_{d}) \leq (q-2)^{d-2}(q-1)^{s-d+1}$ and $d_{3}(C_{d}) \leq (q-2)^{d-2}(q-1)^{s-d+2}.$\\
\end{proof}

\subsection{Generalized Hamming weights of square-free affine evaluation code}
For $1 \leq r \leq dim_{K} KV_{\leq d}$, the $r$-th generalized Hamming weight of $C_{\leq d}$ is given by
$$d_{r}(C_{\leq d}):=min\{~|T \backslash V_{T}(H)|~:~ H:=\{f_{1},~f_{2},\cdots,f_{r}\} \subseteq KV_{\leq d} \text{ is linearly independent over } K\}.$$ In this subsection, we determine the generalized Hamming weights of $C_{\leq d}$, partially.\\

\begin{theorem}
Let $1 \leq r \leq dim_{K} KV_{\leq d}$. For $d+r-2<s$, we have $$d_{r}(C_{\leq d})=(q-2)^{d-1}(q-1)^{s-d-r+1}[(q-1)^{r}-1].$$
\end{theorem}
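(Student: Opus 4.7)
The plan is to mirror the two-sided strategy used in Theorem $4.1$ and Corollary $4.2$, adapted to the inhomogeneous square-free setting. The lower bound is immediate from Theorem $3.9$: for any linearly independent $f_{1},\ldots,f_{r} \in KV_{\leq d}$ with $d+r-2 < s$, we have
$$|T \setminus V_{T}(f_{1},\ldots,f_{r})| \geq (q-1)^{s}-\bigl[(q-1)^{s}-(q-2)^{d-1}(q-1)^{s-d-r+1}[(q-1)^{r}-1]\bigr],$$
which gives $d_{r}(C_{\leq d}) \geq (q-2)^{d-1}(q-1)^{s-d-r+1}[(q-1)^{r}-1]$.

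For the matching upper bound, the key is to exhibit $r$ explicit polynomials in $KV_{\leq d}$ whose common zero set in $T$ has exactly $(q-1)^{s}-(q-2)^{d-1}(q-1)^{s-d-r+1}[(q-1)^{r}-1]$ points. Since the condition $d+r-2<s$ is weaker than the $2d+r-2<s$ required in Theorem $4.1$, we have more freedom: rather than use $d$ degree-one binomial factors in $2d$ variables, I propose to use inhomogeneous linear factors of the form $(t_{j}-1)$, which sit in $KV_{\leq d}$ but not in $KV_{d}$. Concretely, set
$$g_{i} := (t_{1}-1)(t_{2}-1)\cdots(t_{d-1}-1)(t_{d+i-1}-1), \qquad 1 \leq i \leq r.$$
These are well-defined since $d+r-1 \leq s$, they are square-free polynomials of degree $d$ (so they lie in $KV_{\leq d}$), and their leading monomials $t_{1}t_{2}\cdots t_{d-1}t_{d+i-1}$ are distinct, hence the $g_{i}$ are linearly independent over $K$.

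Writing $g := (t_{1}-1)\cdots(t_{d-1}-1)$ and $h_{i} := (t_{d+i-1}-1)$, we have $g_{i} = g\cdot h_{i}$ and
$$V_{T}(g_{1},\ldots,g_{r}) = V_{T}(g) \cup \bigl(V_{T}(h_{1}) \cap \cdots \cap V_{T}(h_{r})\bigr).$$
A direct count gives $|V_{T}(g)| = (q-1)^{s-d+1}\bigl[(q-1)^{d-1}-(q-2)^{d-1}\bigr]$, $|V_{T}(h_{1})\cap\cdots\cap V_{T}(h_{r})| = (q-1)^{s-r}$, and $|V_{T}(g)\cap V_{T}(h_{1})\cap\cdots\cap V_{T}(h_{r})| = (q-1)^{s-d-r+1}\bigl[(q-1)^{d-1}-(q-2)^{d-1}\bigr]$ (the variables $t_{d},\ldots,t_{d+r-1}$ are pinned to $1$, the variables $t_{d+r},\ldots,t_{s}$ remain free, and at least one of $t_{1},\ldots,t_{d-1}$ must equal $1$). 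Applying inclusion-exclusion and simplifying should yield
$$|V_{T}(g_{1},\ldots,g_{r})| = (q-1)^{s}-(q-2)^{d-1}(q-1)^{s-d-r+1}[(q-1)^{r}-1],$$
which provides the matching upper bound.

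The only delicate step is verifying the inclusion-exclusion simplification; the rest is bookkeeping. I expect no obstacle: the bound from Theorem $3.9$ does the heavy lifting, and the explicit family $g_{i}$ has been engineered so that the $d-1$ common factors $(t_{j}-1)$ and the single varying factor $(t_{d+i-1}-1)$ interact through a product formula that collapses cleanly. Note that this construction uses inhomogeneous factors in an essential way; the homogeneous analogue for $C_{d}$ required pairing variables as $(t_{2j-1}-t_{2j})$ and thus needed twice as many variables, explaining why the present hypothesis $d+r-2<s$ is strictly weaker than $2d+r-2<s$ of Theorem $4.1$.
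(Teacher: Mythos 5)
Your proposal is correct and follows essentially the same route as the paper: the lower bound via Theorem 3.9, and the upper bound via exactly the family $(t_{1}-1)\cdots(t_{d-1}-1)(t_{d+i-1}-1)$, $1\leq i\leq r$, with the same decomposition $V_{T}(g)\cup\bigl(V_{T}(h_{1})\cap\cdots\cap V_{T}(h_{r})\bigr)$ and inclusion-exclusion count. Your three cardinalities are all correct and the algebra does collapse to $(q-1)^{s}-(q-2)^{d-1}(q-1)^{s-d-r+1}[(q-1)^{r}-1]$, so there is no gap.
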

\begin{proof}
From Theorem $3.9$, we have
$$d_{r}(C_{\leq d})\geq (q-2)^{d-1}(q-1)^{s-d-r+1}[(q-1)^{r}-1].$$
To prove the converse, consider the following polynomials\\
$$f'_{1}=(t_{1}-1)(t_{2}-1)\cdots (t_{d-1}-1)(t_{d}-1),$$
$$f'_{2}=(t_{1}-1)(t_{2}-1)\cdots (t_{d-1}-1)(t_{d+1}-1),$$
$$\vdots$$
$$f'_{r}=(t_{1}-1)(t_{2}-1)\cdots (t_{d-1}-1)(t_{d+r-1}-1).$$

Then, $f'_{1},~f'_{1},\cdots,f'_{r} \in KV_{\leq d}$ are linearly independent over $K$. Let $g=(t_{1}-1)(t_{2}-1)\cdots (t_{d-1}-1)$. Let $T_{1}=(\mathbb{F}_{q}^{*})^{d-1}.$ Then,
\begin{align*}
|V_{T}(f'_{1},~f'_{2},\cdots,f'_{r})|&=|V_{T}(g) \cup V_{T}(t_{d}-1,~t_{d+1}-1,\cdots,t_{d+r-1}-1)|\\
&=(q-1)^{s-d+1}|V_{T_{1}}(g)|+(q-1)^{s-r}-(q-1)^{s-d-r+1}|V_{T_{1}}(g)|\\
&=(q-1)^{s-r}+(q-1)^{s-d-r+1}[(q-1)^{r}-1][(q-1)^{d-1}-(q-2)^{d-1}]\\
&=(q-1)^{s}-(q-2)^{d-1}(q-1)^{s-d-r+1}[(q-1)^{r}-1].\\
\end{align*}
Thus, $d_{r}(C_{\leq d})\leq (q-2)^{d-1}(q-1)^{s-d-r+1}[(q-1)^{r}-1]$. Hence, the result.
\end{proof}

When $r=2$, we get $d_{2}(C_{\leq d})=q(q-2)^{d}(q-1)^{s-d-1}$ for $d<s$, which gives us result of Theorem $2.5$ for $d<s$.

\section{Dual code}
In this section, we determine the dual code of the toric codes over hypersimplices, using the ideas of \cite{J} and \cite{J1}.\\

Consider the set \begin{equation}
\Delta:=\{(a_{1},~a_{2},\cdots,a_{s})~|~a_{i} \in \{0,1\},~\sum_{i=1}^{s}a_{i}=d\}.
\end{equation}
For $(b_{1},~b_{2},\cdots,b_{s})\in (\{0,~1,\cdots,q-2\})^{s}$, define $(\widehat{b_{1}},~\widehat{b_{2}},\cdots,\widehat{b_{s}})\in (\{0,~1,\cdots,q-2\})^{s}$ as 
\[\widehat{b_{i}}
 =\begin{cases}
0 & \text{if } b_{i}=0,\\
q-1-b_{i}  &\text{if } b_{i} \neq 0.\\
\end{cases}  
 \]

 Let $\mathcal{G}:=\{\widehat{b}~:~b \in \Delta\}$ and $\Delta':=(\{0,~1,\cdots,q-2\})^{s} \backslash \mathcal{G}.$ Define $$E_{\Delta'}:=span_{\mathbb{F}_{q}}\{t_{1}^{a_{1}} t_{2}^{a_{2}} \cdots t_{s}^{a_{s}}~:~(a_{1},~a_{2},\cdots,a_{s}) \in \Delta'\}.$$
 
Replacing $KV_{d}$ by $S$ in equation $(2.1)$, we define a map 
\begin{align*}
ev_{T}:~& S \rightarrow \mathbb{F}_{q}^{m}\\
&f \mapsto (f(P_{1}),~f(P_{2}),\cdots, f(P_{m})),
\end{align*}
 where $T=\{ P_{1},P_{2}, \cdots, P_{m}\}$ is the affine torus in $\mathbb{A}^{s}$. Then, $C_{\Delta'}:=ev_{T}(E_{\Delta'})$ is a linear code over $\mathbb{F}_{q}$ of length $m$ on the affine torus $T$. Similarly, we have the code $C_{\Delta}=ev_{T}(E_{\Delta})=C_{d}$. From the definition, it is clear that the map $ev_{T}|_{E_{\Delta'}}$ is injective. Therefore, we have
\begin{lemma}
$dim_{\mathbb{F}_{q}}( C_{\Delta'})=(q-1)^s-{ s \choose d}=m-dim_{\mathbb{F}_{q}}( C_{d}).$
\end{lemma}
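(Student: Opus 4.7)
The plan is to obtain the dimension of $C_{\Delta'}$ via the chain of equalities
$$\dim_{\mathbb{F}_{q}}(C_{\Delta'}) = \dim_{\mathbb{F}_{q}}(E_{\Delta'}) = |\Delta'| = (q-1)^s - \binom{s}{d},$$
and then match this to $m - \dim_{\mathbb{F}_{q}}(C_{d})$ using Theorem $2.1$. The hypothesis needed to make the last identification go through will be $q \geq 3$, since in that case $\dim_{\mathbb{F}_{q}}(C_{d}) = \binom{s}{d}$.

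First I would count $|\Delta'|$. The map $b \mapsto \widehat{b}$ sends $\Delta$ into $\{0,q-2\}^{s}$, and on $\Delta$ it is visibly injective because $b$ can be recovered from $\widehat{b}$ via $b_i = 0 \Leftrightarrow \widehat{b_i}=0$ (the nonzero entries of $b$ are exactly the coordinates where $\widehat{b_i} = q-2$). Hence $|\mathcal{G}| = |\Delta| = \binom{s}{d}$, and therefore $|\Delta'| = (q-1)^{s} - \binom{s}{d}$. Since $E_{\Delta'}$ is by definition the span of the distinct monomials $\{t^{a} : a \in \Delta'\}$, we immediately get $\dim_{\mathbb{F}_{q}}(E_{\Delta'}) = |\Delta'|$.

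Next I would show $ev_T|_{E_{\Delta'}}$ is injective, which is the only substantive step. Recall that the vanishing ideal of $T = (\mathbb{F}_q^*)^s$ is $I(T) = \langle t_1^{q-1}-1,\ldots,t_s^{q-1}-1\rangle$, with reduced Gr\"obner basis $\{t_i^{q-1}-1\}$. The set of monomials $\{t^{a} : a \in \{0,1,\ldots,q-2\}^{s}\}$ is therefore a basis for $S/I(T)$, and the evaluation map induces an $\mathbb{F}_q$-linear isomorphism between this span and $\mathbb{F}_q^{m}$. Since $\Delta' \subseteq \{0,1,\ldots,q-2\}^{s}$, the subspace $E_{\Delta'}$ sits inside this span, so $ev_T|_{E_{\Delta'}}$ is injective. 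Consequently $\dim_{\mathbb{F}_q}(C_{\Delta'}) = \dim_{\mathbb{F}_q}(E_{\Delta'}) = (q-1)^{s} - \binom{s}{d}$.

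Finally, invoking Theorem $2.1$ (in the case $q\geq 3$) gives $\dim_{\mathbb{F}_q}(C_{d}) = \binom{s}{d}$, so $m - \dim_{\mathbb{F}_q}(C_d) = (q-1)^{s} - \binom{s}{d}$, completing the stated equality. There is no real obstacle here; the only point requiring any care is the identification of the monomial basis of $S/I(T)$, which is standard and already implicit in the earlier use of the Gr\"obner basis $\{t_i^{q-1}-1\}$ in Section $3$.
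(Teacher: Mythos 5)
Your proposal is correct and follows essentially the same route as the paper: the paper also obtains the lemma from the injectivity of $ev_T|_{E_{\Delta'}}$ (which it asserts as clear from the definition) together with the count $|\Delta'|=(q-1)^s-\binom{s}{d}$ and Theorem $2.1$. You merely supply the details the paper leaves implicit — injectivity via the monomial basis of $S/I(T)$ coming from the Gr\"obner basis $\{t_i^{q-1}-1\}$, injectivity of $b\mapsto\widehat{b}$ on $\Delta$, and the (correct) remark that $q\geq 3$ is needed for the final identification.
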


\begin{lemma}
For $a \in \Delta$ and $b \in \Delta'$, we have $$ev_{T}(t^{a}).ev_{T}(t^{b})=0,$$ where we have $t^{a}:=t_{1}^{a_{1}} t_{2}^{a_{2}} \cdots t_{s}^{a_{s}}$ for $a=(a_{1},~a_{2},\cdots, a_{s})$.
\end{lemma}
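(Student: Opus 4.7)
The plan is to compute the inner product directly by unfolding the definition and using the standard character sum identity over $\mathbb{F}_q^*$.

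First I would write
\[
ev_{T}(t^{a}) \cdot ev_{T}(t^{b}) \;=\; \sum_{P \in T} P^{a}\, P^{b} \;=\; \sum_{P \in T} P^{a+b},
\]
and note that since $T = (\mathbb{F}_{q}^{*})^{s}$ is a product, this sum factors coordinatewise:
\[
\sum_{P \in T} P^{a+b} \;=\; \prod_{i=1}^{s} \Bigl(\sum_{\alpha \in \mathbb{F}_{q}^{*}} \alpha^{a_i + b_i}\Bigr).
\]

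Next I would invoke the standard fact that for a nonnegative integer $k$, the power sum $\sum_{\alpha \in \mathbb{F}_{q}^{*}} \alpha^{k}$ equals $q-1$ when $(q-1) \mid k$ and equals $0$ otherwise (the case $k=0$ falls into the first). Therefore the whole product vanishes unless $(q-1) \mid (a_i + b_i)$ for every $i \in \{1,\dots,s\}$. The key observation is that $a_i \in \{0,1\}$ because $a \in \Delta$, and $b_i \in \{0,1,\dots,q-2\}$ because $b \in \Delta'$, so $0 \le a_i + b_i \le q-1$. Thus divisibility by $q-1$ forces $a_i + b_i \in \{0,\, q-1\}$, which in turn forces $(a_i,b_i) = (0,0)$ or $(a_i,b_i) = (1,q-2)$ coordinate by coordinate.

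Reading off the definition of the hat operation, this is exactly the condition $b_i = \widehat{a_i}$ for all $i$, i.e.\ $b = \widehat{a}$. Since $a \in \Delta$, this would mean $b \in \mathcal{G}$, contradicting $b \in \Delta' = (\{0,1,\dots,q-2\})^{s} \setminus \mathcal{G}$. Hence at least one factor of the product is zero, and we conclude $ev_{T}(t^{a}) \cdot ev_{T}(t^{b}) = 0$. There is no real obstacle here; the only subtle point is keeping track of the two-step translation between the exponent vectors $(a_i,b_i)$ summing to a multiple of $q-1$ and the combinatorial description of $\mathcal{G}$ via the hat map, and verifying that the constraints $a_i \in \{0,1\}$ and $b_i \le q-2$ are exactly what makes $b = \widehat{a}$ the unique possibility.
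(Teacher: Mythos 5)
Your proof is correct and follows essentially the same route as the paper: both factor the Euclidean product coordinatewise over $T=(\mathbb{F}_q^*)^s$ and use the power-sum (geometric series) identity over $\mathbb{F}_q^*$ to see that nonvanishing forces $b_i=\widehat{a_i}$ in every coordinate, i.e.\ $b\in\mathcal{G}$, contradicting $b\in\Delta'$. Your explicit bound $0\le a_i+b_i\le q-1$ just makes the divisibility case analysis slightly more transparent than the paper's congruence formulation, but the argument is the same.
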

\begin{proof}
Fix a primitive element $\theta$ of $\mathbb{F}_{q}$ i.e. $\mathbb{F}_{q}^{*}=\langle \theta \rangle$. For $a,b \in (\{0,~1,\cdots,q-2\})^{s}$, we have 
$$ev_{T}(t^{a}).ev_{T}(t^{b})=\prod_{j=1}^{s} \sum_{i=0}^{q-2} (\theta^{i})^{a_{j}+b_{j}} .$$
Now, if for some $j$, $a_{j}=b_{j}=0$ or $a_{j}=q-1-b_{j}$ , then $$\sum_{i=0}^{q-2} (\theta^{i})^{a_{j}+b_{j}}=(q-1)\neq 0.$$
But if for some $j$, $a_{j}+b_{j} \not \equiv 0~(\text{mod } q-1)$, then $$\sum_{i=0}^{q-2} (\theta^{i})^{a_{j}+b_{j}}=\frac{({{\theta}^{a_{j}+b_{j}}})^{q-1}-1}{\theta^{a_{j}+b_{j}}-1}=0.$$
Thus,
$ev_{T}(t^{a}).ev_{T}(t^{b}) \neq 0$ if and only if for each $j$, $1\leq j \leq s$, $a_{j}=b_{j}=0$ or $a_{j}=q-1-b_{j}$, i.e.\ $a \in \Delta$ and $b \not \in \Delta'$. Hence proved.
\end{proof}

\begin{theorem}
The dual of the code $C_{d}$ is the code $C_{\Delta'}$ with respect to Euclidean scalar product i.e.\ $C_{\Delta'}=C_{d}^{\perp}.$
\end{theorem}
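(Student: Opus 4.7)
The strategy is a direct combination of Lemma 5.1 and Lemma 5.2 together with a dimension count. Namely, I would first establish the inclusion $C_{\Delta'} \subseteq C_{d}^{\perp}$, and then verify that $\dim_{\mathbb{F}_q} C_{\Delta'} = \dim_{\mathbb{F}_q} C_{d}^{\perp}$, which forces equality.

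For the inclusion, observe that $C_{d} = \mathrm{span}_{\mathbb{F}_q}\{ev_{T}(t^{a}) : a \in \Delta\}$ and $C_{\Delta'} = \mathrm{span}_{\mathbb{F}_q}\{ev_{T}(t^{b}) : b \in \Delta'\}$. By bilinearity of the Euclidean scalar product, it suffices to show that each generator of $C_{\Delta'}$ is orthogonal to each generator of $C_{d}$; but this is precisely the content of Lemma 5.2. Hence $C_{\Delta'} \subseteq C_{d}^{\perp}$.

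For the dimension count, recall that $C_{d}$ has length $m = (q-1)^s$, so $\dim_{\mathbb{F}_q} C_{d}^{\perp} = m - \dim_{\mathbb{F}_q} C_{d}$. Assuming $q \geq 3$, Theorem 2.1 gives $\dim_{\mathbb{F}_q} C_{d} = \binom{s}{d}$, while Lemma 5.1 gives $\dim_{\mathbb{F}_q} C_{\Delta'} = (q-1)^s - \binom{s}{d}$. These two quantities coincide, so the inclusion $C_{\Delta'} \subseteq C_{d}^{\perp}$ is forced to be an equality.

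There is essentially no obstacle here: the non-trivial work has already been carried out in Lemma 5.2 (the character-sum / primitive element computation) and in Lemma 5.1 (injectivity of $ev_{T}|_{E_{\Delta'}}$, which relies on the fact that monomials $t^{a}$ with $a \in \{0,1,\dots,q-2\}^s$ descend to a basis of $S/I(T)$). The only small point to check is that, since the map $b \mapsto \widehat{b}$ is a bijection on $\{0,1,\dots,q-2\}^s$ (for $q \geq 3$), the set $\mathcal{G}$ has the same cardinality as $\Delta$, so that $|\Delta'| = (q-1)^s - \binom{s}{d}$ matches $\dim_{\mathbb{F}_q} C_{d}^{\perp}$ exactly. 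The case $q = 2$ reduces to a one-dimensional code of length one and may be treated separately if desired.
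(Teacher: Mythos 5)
Your proposal is correct and follows essentially the same route as the paper: orthogonality of generators via Lemma 5.2 combined with bilinearity gives $C_{\Delta'} \subseteq C_{d}^{\perp}$, and the dimension count from Lemma 5.1 forces equality. The remarks on the bijection $b \mapsto \widehat{b}$ and the $q=2$ case are harmless additions but not needed beyond what the paper's argument already contains.
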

\begin{proof}
Let $f \in E_{\Delta'}$. Then $f$ can be written as $f=\sum_{b \in \Delta'} \alpha_{b}t^{b}$ where $\alpha_{b} \in \mathbb{F}_{q}$. For any $g \in E_{\Delta}$, $g=\sum_{a \in \Delta} \beta_{a}t^{a}$, $\beta_{a} \in \mathbb{F}_{q}$, we have 
\begin{align*}
ev_{T}(g).ev_{T}(f)&=ev_{T} \left(\sum_{a \in \Delta} \beta_{a}t^{a} \right).ev_{T} \left(\sum_{b \in \Delta'} \alpha_{b}t^{b}\right)\\
&=\sum_{a \in \Delta} \sum_{b \in \Delta'} \alpha_{b} \beta_{a} ev_{T}(t^{a}).ev_{T}(t^{b})=0,
\end{align*}
by Lemma $5.2$. This implies $C_{\Delta'} \subseteq C_{d}^{\perp}.$ From Lemma $5.1$, we have $dim_{\mathbb{F}_{q}}(C_{\Delta'})=dim_{\mathbb{F}_{q}}(C_{d}^{\perp})$. Hence the result.
\end{proof}

The codes $C_{d}$ and $C_{\Delta'}$ are \textit{J}-affine variety codes with \textit{J}$=\{1,~2,\cdots,s\}$, as studied in \cite{J},\cite{J1},\cite{J2}, etc. By using Corollary 2 from \cite{J}, we can obtain stabilizer codes.

\subsection{Dual of $C^{\mathbb{P}}_{d}$}
We have $\mathbb{T}=\{1\} \times (\mathbb{F}_{q}^{*})^{s-1}$. With $\Delta$ as in equation $(5.1)$, let $$\mathcal{H}_{1}:=\{(a_{2},~a_{3},\cdots,a_{s})~:~(a_{1},~a_{2},\cdots,a_{s}) \in \Delta\}.$$ For $(c_{2},~c_{3},\cdots,c_{s}) \in (\{0,~1,\cdots,q-2\})^{s-1}$, define $(\widehat{c_{2}},~\widehat{c_{2}},\cdots,\widehat{c_{s}})\in (\{0,~1,\cdots,q-2\})^{s-1}$ as \[\widehat{c_{i}}
 =\begin{cases}
0 & \text{if } c_{i}=0,\\
q-1-c_{i}  &\text{if } c_{i} \neq 0.\\
\end{cases}  
 \]
Let $\mathcal{H}_{2}:=\{(\widehat{b_{2}},~\widehat{b_{3}},\cdots,\widehat{b_{s}})~:~(b_{2},~b_{3},\cdots,b_{s}) \in \mathcal{H}_{1}\}.$ Let $\mathcal{U}:=(\{0,~1,\cdots,q-2\})^{s-1} \backslash \mathcal{H}_{2}$.\\\\
Define $E_{\mathcal{U}}:=span_{\mathbb{F}_{q}}\{t_{2}^{a_{2}} t_{3}^{a_{3}}\cdots t_{s}^{a_{s}}~|~(a_{2},~a_{3},\cdots,a_{s}) \in \mathcal{U}\}$.\\

Let $T':=(\mathbb{F}_{q}^{*})^{s-1}$. Then $|T'|=\bar{m}$. Let $T'=\{R_{1},~R_{2},\cdots,R_{\bar{m}}\}$ such that $Q_{i}=(1, R_{i})$, $1 \leq i \leq \bar{m}$, where $Q_{i} \in \{1 \} \times (\mathbb{F}_{q}^{*})^{s-1}$, $1 \leq i \leq \bar{m}$, as defined in section $2$. Define a map
\begin{align*}
ev_{T'}:~& S \rightarrow \mathbb{F}_{q}^{\bar{m}}\\
&f \mapsto (f(R_{1}),~f(R_{2}),\cdots, f(R_{\bar{m}})).
\end{align*}
Define $C_{\mathcal{U}}:=ev_{T'}(E_{\mathcal{U}})$. Then $C_{\mathcal{U}}$ is a linear code over $\mathbb{F}_{q}$ of length $\bar{m}$.\\

Note that 
\begin{align*}
C^{\mathbb{P}}_{d}&=\{(f(Q_{1}),\cdots, f(Q_{\bar{m}})):~ f \in KV_{d}\}\\
&=\{(f(1,R_{1}),\cdots, f(1,R_{\bar{m}})):~ f \in KV_{d}\}\\
&=\{(g(R_{1}),\cdots, g(R_{\bar{m}})):~ g \in E_{\mathcal{H}_{1}}\}=ev_{T'}(E_{\mathcal{H}_{1}}),\\
\end{align*}
where $E_{\mathcal{H}_{1}}$ is the $\mathbb{F}_{q}$-vector space generated by the set $\{t^{a}:=t_{2}^{a_{2}} t_{3}^{a_{3}}\cdots t_{s}^{a_{s}} ~:~ (a_{2},~a_{3}, \cdots, a_{s}) \in \mathcal{H}_{1}\}$. Then, $dim_{\mathbb{F}_{q}} C_{\mathcal{U}}=(q-1)^{s-1}-{s \choose d}=\bar{m}-dim_{\mathbb{F}_{q}}(C^{\mathbb{P}}_{d})$.\\

Following Lemma $5.2$, we get
\begin{lemma}
For $a \in \mathcal{H}_{1}$ and $b \in \mathcal{U}$, we have $$ev_{T'}(t^{a}).ev_{T'}(t^{b})=0.$$ 
\end{lemma}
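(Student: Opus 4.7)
The plan is to imitate the proof of Lemma 5.2 almost verbatim, now working in $s-1$ variables $t_2,\ldots,t_s$ and with the affine torus $T' = (\mathbb{F}_q^*)^{s-1}$ in place of $T$. The whole argument reduces to a character-sum calculation identical in spirit to the one appearing a few lines earlier, and the only new input is to identify which pairs $(a,b)$ force non-vanishing and then check that such pairs are excluded by the hypothesis $b \in \mathcal{U}$.

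First I would fix a primitive element $\theta \in \mathbb{F}_q^*$, so that every point of $T'$ is of the form $(\theta^{i_2},\ldots,\theta^{i_s})$ with $0 \le i_j \le q-2$. Running the sum defining the Euclidean inner product over all points of $T'$ factors coordinate by coordinate, giving
$$ev_{T'}(t^a)\cdot ev_{T'}(t^b) \;=\; \prod_{j=2}^{s}\,\sum_{i=0}^{q-2} \theta^{i(a_j+b_j)}.$$
The key step is then to analyze each factor exactly as in Lemma 5.2: the inner geometric sum equals $q-1 \neq 0$ in $\mathbb{F}_q$ whenever $a_j+b_j \equiv 0 \pmod{q-1}$, and telescopes to $0$ otherwise. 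Since $a_j,b_j \in \{0,1,\ldots,q-2\}$, the congruence $a_j+b_j \equiv 0 \pmod{q-1}$ holds precisely when either $a_j = b_j = 0$, or $a_j \neq 0$ and $b_j = q-1-a_j$; in both cases this reads $b_j = \widehat{a_j}$ in the notation of Section~5.

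Consequently $ev_{T'}(t^a)\cdot ev_{T'}(t^b) \neq 0$ if and only if $b = \widehat{a}$ coordinatewise. Since $a \in \mathcal{H}_1$, the definition of $\mathcal{H}_2$ yields $\widehat{a} \in \mathcal{H}_2$; but by hypothesis $b \in \mathcal{U} = (\{0,1,\ldots,q-2\})^{s-1}\setminus \mathcal{H}_2$, so $b \neq \widehat{a}$. Therefore the inner product vanishes, which is the claim.

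There is essentially no obstacle here: the lemma is a direct translation of Lemma 5.2 from $s$ to $s-1$ coordinates, and the only item to keep straight is the bookkeeping of the hat involution together with the definition of $\mathcal{U}$ as the complement of $\mathcal{H}_2$ (rather than, say, the image of $\Delta'$). The character-sum computation is already written out in the proof of Lemma 5.2 and need not be repeated in detail.
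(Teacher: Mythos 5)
Your proof is correct and is exactly the argument the paper intends: Lemma 5.4 is stated with only the remark ``Following Lemma 5.2,'' and your write-up carries out precisely that character-sum factorization over $T'=(\mathbb{F}_q^*)^{s-1}$, with the observation that a nonzero product forces $b=\widehat{a}\in\mathcal{H}_2$, contradicting $b\in\mathcal{U}$.
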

We have the final result.
\begin{theorem}
The dual code of $C^{\mathbb{P}}_{d}$ is the code $C_{\mathcal{U}}$ with respect to Euclidean scalar product i.e $$(C^{\mathbb{P}}_{d})^{\perp}=C_{\mathcal{U}}.$$
\end{theorem}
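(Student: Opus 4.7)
The plan is to mirror the argument used for the affine dual (Theorem 5.3), transferred to the $(s-1)$-variable setting induced by the normalization $\mathbb{T}=\{1\}\times(\mathbb{F}_q^*)^{s-1}$. The preamble to the theorem already identifies $C^{\mathbb{P}}_{d}$ with $ev_{T'}(E_{\mathcal{H}_1})$, so the whole problem reduces to duality between the two subcodes $ev_{T'}(E_{\mathcal{H}_1})$ and $ev_{T'}(E_{\mathcal{U}})$ of $\mathbb{F}_q^{\bar m}$.

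First I would establish the inclusion $C_{\mathcal{U}}\subseteq (C^{\mathbb{P}}_{d})^{\perp}$. Given $f=\sum_{b\in\mathcal{U}}\alpha_b t^{b}\in E_{\mathcal{U}}$ and $g=\sum_{a\in\mathcal{H}_1}\beta_a t^{a}\in E_{\mathcal{H}_1}$, bilinearity of the Euclidean pairing together with Lemma 5.4 gives
\[
ev_{T'}(g)\cdot ev_{T'}(f)=\sum_{a\in\mathcal{H}_1}\sum_{b\in\mathcal{U}}\beta_a\alpha_b\, ev_{T'}(t^{a})\cdot ev_{T'}(t^{b})=0,
\]
since for every pair $(a,b)\in \mathcal{H}_1\times\mathcal{U}$ the exponent $b$ is by construction not the hat of $a$, so the corresponding inner product vanishes. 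This proves one inclusion.

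Next I would match dimensions. By Theorem 2.1, $\dim_{\mathbb{F}_q} C^{\mathbb{P}}_{d}=\binom{s}{d}$ for $q\geq 3$, so $\dim (C^{\mathbb{P}}_{d})^{\perp}=\bar m-\binom{s}{d}=(q-1)^{s-1}-\binom{s}{d}$. On the other side, the hat map is a bijection of $\{0,1,\dots,q-2\}^{s-1}$ to itself, so $|\mathcal{H}_2|=|\mathcal{H}_1|$; and projecting $\Delta$ onto its last $s-1$ coordinates is injective because the first coordinate is recoverable as $d$ minus the remaining sum, so $|\mathcal{H}_1|=|\Delta|=\binom{s}{d}$. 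Hence $|\mathcal{U}|=(q-1)^{s-1}-\binom{s}{d}$. It remains to see that $ev_{T'}$ is injective on $E_{\mathcal{U}}$; this follows because $\{t_i^{q-1}-1:i=2,\dots,s\}$ is a Gr\"obner basis of $I(T')$ with leading terms $t_i^{q-1}$, so the monomials $t^{b}$ with $b\in(\{0,\dots,q-2\})^{s-1}$ are a $K$-basis of $S'/I(T')$ (where $S'=K[t_2,\dots,t_s]$), and in particular those with $b\in\mathcal{U}$ remain linearly independent after evaluation on $T'$. Therefore $\dim_{\mathbb{F}_q} C_{\mathcal{U}}=|\mathcal{U}|=\dim(C^{\mathbb{P}}_{d})^{\perp}$, and combined with the inclusion above this forces equality.

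The one step I expect to require a little care is the dimension count of $\mathcal{H}_1$ and the injectivity of $ev_{T'}|_{E_{\mathcal{U}}}$; everything else is purely formal manipulation of Lemma 5.4. The orthogonality part is essentially free given Lemma 5.4, so the substantive content of the proof is the combinatorial verification that the bijection $b\mapsto\widehat{b}$ together with the projection $\Delta\to\mathcal{H}_1$ produces a set $\mathcal{U}$ of exactly the right cardinality, namely $\bar m-\binom{s}{d}$. Once that is in place, the theorem follows immediately from Theorem 2.1 and the inclusion established in the first step.
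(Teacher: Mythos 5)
Your proposal is correct and follows essentially the same route the paper intends: orthogonality of $C_{\mathcal{U}}$ and $C^{\mathbb{P}}_{d}$ via Lemma 5.4 and bilinearity, then equality by the dimension count $\dim_{\mathbb{F}_q} C_{\mathcal{U}}=\bar m-\binom{s}{d}=\dim_{\mathbb{F}_q}(C^{\mathbb{P}}_{d})^{\perp}$, exactly as in the proof of Theorem 5.3. The extra details you supply (injectivity of the projection $\Delta\to\mathcal{H}_1$ and of $ev_{T'}$ on $E_{\mathcal{U}}$) are precisely the facts the paper asserts without proof in the preamble, so nothing is missing.
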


\section{Concluding remarks}
In this note, we have determined the generalized Hamming weights of toric codes over hypersimplices. The generalized Hamming weights of square-free affine evaluation codes are also calculated, under certain conditions. Furthermore, we have determined the dual of the toric codes with respect to the Euclidean scalar product. It will be interesting to calculate the remaining generalized Hamming weights of these codes.

\section{Acknowledgements}
We thank  Delio Jaramillo, Maria Vaz Pinto and Rafael H. Villarreal for suggesting this problem. We also thank Maria Vaz Pinto for suggesting improvements in Lemma $3.3$. \par
This work is a part of the PhD thesis of the first author.

\end{document}